\newtheorem{theorem}{Theorem}[section]
\newtheorem{cor}[theorem]{Corollary}
\newtheorem{lemma}[theorem]{Lemma}
\newtheorem{prop}[theorem]{Proposition}
\theoremstyle{definition}
\newtheorem{define}[theorem]{Definition}
\newtheorem{ex}[theorem]{Example}
\newtheorem{remark}[theorem]{Remark}
\newcommand{\field}[1]{\mathbb{#1}}
\newcommand{\zkz}{\field{Z}/k\field{Z}}
\newcommand{\rz}{\field{R}/\field{Z}}
\newcommand{\qz}{\field{Q}/\field{Z}}
\begin{document}
\title{$\rz$-valued index theory via geometric $K$-homology}
\author{Robin J. Deeley}
\subjclass[2010]{Primary: 19K33; Secondary: 19K56, 58J28, 58J22}
\keywords{index theory, $K$-homology, $\eta$-invariants}
\date{\today}
\begin{abstract}
A model of $K$-homology with coefficients in a mapping cone using the framework of the geometric cycles of Baum and Douglas is developed.  In particular, this leads to a geometric realization of $K$-homology with coefficients in $\rz$.  In turn, this group is related to the relative $\eta$-invariant via index pairings.
\end{abstract}
\maketitle

\section{Introduction}
The goal of this paper is the construction of a relative group in geometric $K$-homology.  This construction is inspired by ideas of Baum and Douglas (see \cite{BD}), Karoubi (see \cite[Section 2.13]{Kar}) and Stong (see \cite[Chapter 1]{Sto}).  An application is the construction of geometric models for $K$-homology with coefficients; of particular interest is the coefficient group $\rz$.  The reader is directed to \cite{HigRoeEta} (in particular, Section 6) for a discussion on the relationship between geometric $K$-homology and $\rz$-valued index theory.  The reader should also note that the starting point for the construction considered here was \cite[Remark 6.12]{HigRoeEta}. Recently (see \cite{AAS}) Antonini, Azzali, and Skandalis have considered an operator algebraic approach to $\rz$-valued index theory.
\par
To motivate our construction, we briefly discuss various pairings between $K$-theory and $K$-homology.  Let $X$ denote a finite CW-complex, $K^*(X)$ its $K$-theory and $K_*(X)$ its $K$-homology.  The index pairing between $K$-theory and $K$-homology is defined as a map $K^p(X)\times K_p(X) \rightarrow \field{Z}$.  It is useful to have an explicit realization of this pairing depending on the specific choice of cocycles used to model $K$-theory (e.g., vector bundles, projective modules or projections) and the specific choice of cycles in $K$-homology (e.g., Baum-Douglas cycles, Kasparov cycles, or Extensions).  For example, in the context of projections and Kasparov cycles, the reader can find such a formula in \cite[Section 8.2]{HR}.  In fact, this formula (and many similar formulae) factor as follows:
\begin{equation} \label{PairIntroNoCoeff}
K^p(X) \times K_p(X) \rightarrow K_0(pt) \cong \field{Z}
\end{equation}
where the isomorphism between $K_0(pt)$ and $\field{Z}$ is given (depending on context) either by the topological or analytic index map. 
\par
It is natural to ask what the situation is for pairings in $K$-theory and $K$-homology with coefficients.  As such, let $K^*(X;\rz)$ denote the $K$-theory with coefficients in $\rz$ of $X$.  There is now an index pairing $K^p(X;\rz)\times K_p(X) \rightarrow \rz$ and (in \cite{Lot}) Lott showed that it is realized analytically by the relative $\eta$-invariant.  Needless to say, the relative $\eta$-invariant is an important invariant which has been studied extensively since being introduced by Atiyah, Patodi, and Singer in \cite{APS1,APS2,APS3}.  Based on the discussion in the previous paragraph (in particular, Equation \ref{PairIntroNoCoeff}), one would like to write this pairing in the following form:
\begin{equation}
K^p(X;\rz) \times K_p(X) \rightarrow K_0(pt;\rz) \cong \rz
\end{equation}        
Thus, to obtain a suitable {\it geometric} realization of this pairing, we need to understand $K_0(pt;\rz)$ geometrically. 
\par
Analytically, $K_*(X;\rz)$ is given by $KK^*(C(X),SC_{\phi})$ where $\phi:\field{C} \hookrightarrow N$, $N$ is a ${\rm II}_1$-factor, and $C_{\phi}$ denotes the mapping cone of $\phi$.  More generally, we let $\phi:B_1\rightarrow B_2$ be any unital $*$-homomorphism between (unital) $C^*$-algebras, $B_1$ and $B_2$.  Our goal is the construction of a geometric model for $K$-homology with coefficients in the mapping cone of $\phi$.  In other words, a geometric realization of the $KK$-theory group: $KK(C(X),C_{\phi})$ where $C_{\phi}$ is the mapping cone of $\phi$.  The connection between this framework and $K$-homology with coefficients is discussed in detail in Example \ref{coeExa}. 
\par
Additional notation is required to review the Baum-Douglas model of $K$-homology and its generalization to $K$-homology with coefficients in a unital $C^*$-algebra (i.e., a geometric model for $KK(C(X),B)$ where $B$ is a unital $C^*$-algebra).  Recall that $X$ denotes a finite CW-complex.  The $C^*$-algebra of continuous $B$-valued functions on $X$ is denoted by $C(X,B)$ and the Grothendieck group of (isomorphism classes of) finitely generated projective Hilbert $B$-module bundles over $X$ is denoted by $K^0(X;B)$.    It is well-known (for example, see \cite[Proposition 2.17]{Sch}) that 
$$K^0(X;B) \cong K_0(C(X,B))\cong K_0(C(X)\otimes B)$$  
We will refer to finitely generated projective Hilbert $B$-modules bundles simply as $B$-module bundles; all bundles in this paper are assumed to be locally trivial.  The reader should recall (see \cite{BD, BD2, BDrel, BDT, BHS}) that a cycle (over $X$) in the Baum-Douglas model (for $K$-homology) is given by a triple, $(M,E,f)$, where $M$ is a smooth compact $spin^c$-manifold, $E$ is a smooth Hermitian vector bundle over $M$, and $f:M\rightarrow X$ is a continuous map.  Addition of cycles is defined using disjoint union.  The geometric $K$-homology group (denote $K_*(X)$) is given by equivalence classes of cycles under the relation generated by three elementary operations: disjoint union/direct sum, bordism and vector bundle modification.  The reader can find a nice treatment of the construction of this model in \cite{BHS}.    
\par
This model has been generalized to produce models for $KK(C(X),B)$ (recall that $B$ denotes a unital $C^*$-algebra); one replaces the vector bundle, $E$, with a (finitely generated projective) Hilbert $B$-module bundle.  The operations, relations, and map to $KK(C(X),B)$ are analogous to those on the original cycles of Baum and Douglas; we denote the (geometric) group defined via these cycles and relation by $K_*(X;B)$.  If $\phi:B_1 \rightarrow B_2$ is as above, then there is an induced map 
$$\phi_*:K_*(X;B_1) \rightarrow K_*(X;B_2)$$
which is defined at the level of geometric cycles via $(M,E_{B_1},f)\mapsto (M,E_{B_1}\otimes_{\phi} B_2,f)$.  More details on this model can be found in \cite{Wal}.
\par
Analytically (i.e., in the framework of $C^*$-algebras), the mapping cone of $\phi$ (denoted $C_{\phi}$) leads to the following six term exact sequence:
\begin{center}
$\begin{CD}
KK^0(C(X),B_1) @>\phi_*>> KK^0(C(X),B_2) @>>> KK^0(C(X),C_{\phi}) \\
@AAA @. @VVV \\
KK^1(C(X),C_{\phi}) @<<<  KK^1(C(X),B_2) @<\phi_*<< KK^1(C(X),B_1) 
\end{CD}$
\end{center}
\par
Two geometric models for $KK(C(X),C_{\phi})$ are developed (see Sections \ref{KarModel} and \ref{BorModel}).  A key part of both is the construction of an exact sequence of the same form as the one produced by the mapping cone construction (see Theorems \ref{exactSeq111} and \ref{bockTypeSeq}).  
\par
The first model is defined using cycles of the form $(M,[(E_{B_1},F_{B_1},\varphi)],f)$ where $M$ and $f$ are as in the Baum-Douglas model and $[E_{B_1},F_{B_1},\varphi]$ is an element in $K_0(C(M)\otimes C_{\phi})$.  We denote the associated $K$-homology group by $\bar{K}_*(X;\phi)$.  While these cycles are defined in a natural way, it is unclear (to the author) how to {\it naturally} incorporate certain constructions from geometric $K$-homology into this framework - in particular, construction related to the bordism relation.  For example, suppose $(M,F_{B_1},f)$ is a geometric cycle in $KK(C(X),B_1)$ whose image under $\phi_*$ is a boundary;  that is, $(M,F_{B_1}\otimes_{\phi}B_2,f)=\partial (W,E_{B_2},f)$.  Based on the exact sequence in Theorem \ref{exactSeq111}, we have a cycle $(\hat{M},[\hat{E}_{B_1},\hat{F}_{B_1},\varphi],g)$ such that 
$$(M,F_{B_1},f)\sim (\hat{M},\hat{E}_{B_1},g)\dot{\cup}(-\hat{M},\hat{F}_{B_1},g)$$  
(where $``-"$ denotes taking the opposite $spin^c$-structure).  However, it is rather difficult to {\it explicitly} find such a cycle.
\par
This consideration leads us to a second model which is defined using cycles more closely related to the bordism relation in geometric $K$-homology.  The cycles in this model are given by $(W,(E_{B_2},F_{B_1},\alpha),f)$ where $W$ is a smooth compact $spin^c$-manifold with boundary, $E_{B_2}$ is a Hilbert $B_2$-module bundle over $W$, $F_{B_1}$ is a Hilbert $B_1$-module bundle over $\partial W$, $\alpha:E_{B_2}|_{\partial W} \cong F_{B_1}\otimes_{\phi} B_2$, and $f:W\rightarrow X$ is a continuous map.  We denote the associated $K$-homology group by $K_*(X;\phi)$.  It should be evident to the reader that the problem discussed in the previous paragraph becomes a non-issue for this model.  Of course, the two models lead to isomorphic theories; we discuss an explicit isomorphism in Section \ref{isoGeoModel1And2}. 
\par
A summary of the content of the paper seems in order.  In Section 2, we discuss the $K$-theory data required for each of our models.  In Sections 3 and 4, the two geometric models for $KK(C(X),C_{\phi})$ are defined.  An isomorphism between these two theories is given in Section 5.  Finally, in Section 6, we specialize to the case of $\phi:\field{C} \rightarrow N$ (where $N$ is a ${\rm II}_1$-factor).  As noted above, this special case gives a geometric model for $K$-homology with coefficients in $\rz$.  
\par
We have assumed that the reader is familiar with the Baum-Douglas model for $K$-homology, the basic properties of Hilbert $C^*$-module bundles, and some basic $KK$-theory.  The notation of \cite{BHS}, \cite{Rav}, and \cite{Sch} (in particular, see Section 2 of this paper) are all used in this paper.  Details on the ``straightening the angle" technique can be found in \cite{CFPerMap} and \cite[Appendix]{Rav}.  In Section \ref{rzIndexSec}, a construction which is similar to one in \cite[Section 5]{APS3} is discussed.  This construction also uses the geometric model for $K_*(X;\zkz)$ discussed in \cite{Dee1} and \cite{Dee2}.    
\par
The reader will notice that the constructions in this paper are almost completely geometric (an exception is Section \ref{isoKarTypSec}).  Since the heart of index theory is the interaction between analysis and geometry, they may wonder if there is an analytic side to the constructions considered here.  This is indeed the case; a detailed development is given in \cite{DeeMapCone}.  Briefly, this work involves two main themes: 
\begin{enumerate}
\item The construction of an explicit (i.e., defined at the level of cycle) isomorphism from $K_*(X;\phi)$ to $KK^{*+1}(C(X),C_{\phi})$;
\item Connecting the index map $K_0(pt;\rz) \rightarrow \rz$ to higher APS-index theory (see \cite{PS} and references therein).
\end{enumerate}

\section{Some remarks on $K$-theory}
\subsection{$K$-theory classes for the Karoubi type model} \label{KthKarRel}
To begin, we recall the construction of a relative $K$-theory group in \cite[Section 2.13]{Kar}.  In our context, this construction leads to a realization of the $K$-theory of a certain mapping cone (see Proposition \ref{sixTermExactForRelKth} below).  As above, $\phi:B_1 \rightarrow B_2$ is a unital $*$-homomorphism and $X$ is a finite CW-complex. We apply Karoubi's relative K-theory construction in the context of the $K$-theory groups $K_*(C(X)\otimes B_i)=K^*(X;B_i)$ as realized using $B_i$-module bundles (in the case of degree zero) and unitaries (in the case of degree one); the map between these groups is given by $\phi_*: K_*(X;B_1) \rightarrow K_*(X;B_2)$.
\par 
Let $\Gamma(X;\phi)$ be the set of cocycles, $(E,F,\varphi)$, where $E$ and $F$ are $B_1$-module bundles over $X$ and $\varphi: E\otimes_{\phi}B_2 \rightarrow F\otimes_{\phi}B_2$ is an isomorphism of $B_2$-module bundles.  Two cocycles $(E,F,\varphi)$ and $(E^{\prime},F^{\prime},\varphi^{\prime})$ are isomorphic if there exist isomorphisms of $B_1$-module bundles, $\beta_1:E \rightarrow E^{\prime}$ and $\beta_2:F\rightarrow F^{\prime}$, which fit into the following commutative diagram:
\begin{center}
$\begin{CD}
E\otimes_{\phi}B_2 @>\varphi>> F\otimes_{\phi}B_2 \\
@V \phi^*(\beta_1)VV  @VV\phi^*(\beta_2)V \\
E^{\prime}\otimes_{\phi}B_2 @>\varphi^{\prime}>> F^{\prime}\otimes_{\phi}B_2
\end{CD}$
\end{center}
\vspace{0.2cm}
A cocyle, $(E,F,\varphi)$, is elementary if $E=F$ and $\varphi$ is homotopic to $Id_{E\otimes_{\phi}B_2}$.  We can add cocycles using direct sum.  
\begin{define}
Let $K^0(X;\phi)$ be the quotient of $\Gamma(X;\phi)$ by the equivalence relation $\epsilon \sim \epsilon^{\prime}$ if there exists elementary cocycles $\varepsilon$ and $\varepsilon^{\prime}$ such that $\epsilon+\varepsilon \cong \epsilon^{\prime}+\varepsilon^{\prime}$. 
\end{define}
There is a similar definition of $K^1(X;\phi)$ (see \cite{Kar} or \cite[Section 2.3]{AAS}); however, we will (apart from the next proposition) only need $K^0(X;\phi)$. The next proposition summarizes the basic properties of $K^*(X;\phi)$ (see \cite[Chapter 2]{Kar} for details). The reader can find an expicit isomorphism from $K^*(X;\phi)$ to the K-theory of the mapping cone in \cite[Section 2.3]{AAS}.  
\begin{prop} \label{sixTermExactForRelKth}
$K^*(X;\phi)$ is an abelian group which is naturally isomorphic to $K_*(C(X)\otimes C_{\phi})$ where $C_{\phi}$ is the mapping cone of $\phi$.  That is, 
$$C_{\phi}=\{ (f,b_1)\in C([0,1),B_2)\oplus B_1 \: | \: f(0)=\phi(b_1)\} $$
In particular, Bott periodicity leads to the following six term exact sequence \\
\begin{center}
$\begin{CD}
K_0(C(X)\otimes B_1) @>\phi_*>> K_0(C(X)\otimes B_2) @>\hat{r}>> K^1(X;\phi) \\
@AA\partial A @. @VV\partial V \\
K^0(X;\phi) @<\hat{r}<<  K_1(C(X)\otimes B_2) @<\phi_*<< K_1(C(X)\otimes B_1) 
\end{CD}$
\end{center}
The maps in this exact sequence are defined explicitly at the level of cocycles.  For example, in the case of $K^0(X;\phi)$, if $u$ is a unitary in $M_n(B_2)$ and $(E,F,\varphi)$ is a cocycle in $K^0(X;\phi)$, then 
$$ \hat{r}([u])=[(X\times B_1^n,X\times B_1^n,u)] \hbox{ and } \partial [E,F,\varphi]=[E]-[F]$$
\end{prop}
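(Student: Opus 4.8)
The plan is to reduce the statement to Karoubi's relative $K$-theory theorem \cite[Chapter 2]{Kar}, in the $C^*$-algebraic form of \cite[Section 2.3]{AAS}, and then to read off the six term sequence from the mapping cone short exact sequence. The first step is to eliminate the space: since $X$ is a finite CW complex and $B_1,B_2$ are unital, $C(X)\otimes C_\phi$ is canonically the mapping cone $C_\psi$ of $\psi:=\mathrm{id}_{C(X)}\otimes\phi\colon C(X,B_1)\to C(X,B_2)$, and a cocycle $(E,F,\varphi)\in\Gamma(X;\phi)$ is precisely a pair of finitely generated projective $C(X,B_1)$-modules $E,F$ equipped with a $C(X,B_2)$-module isomorphism $E\otimes_\psi C(X,B_2)\cong F\otimes_\psi C(X,B_2)$. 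Thus $K^0(X;\phi)$ is exactly Karoubi's relative group of $\psi$, and it suffices to treat an arbitrary unital $*$-homomorphism $\psi\colon A_1\to A_2$ of unital $C^*$-algebras.

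The group structure is routine. Direct sum of cocycles is commutative and associative, the class of the elementary cocycle $(0,0,0)$ is a neutral element, and the inverse of $[(E,F,\varphi)]$ is $[(F,E,\varphi^{-1})]$: after composing with the swap isomorphism $F\oplus E\cong E\oplus F$, the cocycle $(E,F,\varphi)\oplus(F,E,\varphi^{-1})$ becomes $(E\oplus F,\,E\oplus F,\,\sigma)$ with $\sigma=\begin{pmatrix}0 & \varphi^{-1}\\ \varphi & 0\end{pmatrix}$ connected to the identity through automorphisms by the usual rotation homotopy, hence elementary. One also checks that the relation defining $K^0(X;\phi)$ is an equivalence relation compatible with $\oplus$ (a direct sum of elementary cocycles being elementary).

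The substantive point is the natural isomorphism $\Theta\colon K^0(X;\phi)\xrightarrow{\ \cong\ }K_0(C_\psi)$, which I would build following \cite[Section 2.3]{AAS}. One realizes $E$ and $F$ as projections $p,q$ over $A_1$ and $\varphi$ as a partial isometry $w$ over $A_2$ with $w^*w=\psi(p)$, $ww^*=\psi(q)$; amplifying by a $2\times2$ matrix one may replace $w$ by a genuine unitary $\tilde w$ with $\tilde w\,\psi(p\oplus0)\,\tilde w^*=\psi(q\oplus0)$. The existence of $\varphi$ shows $[E]-[F]$ lies in the kernel of $\psi_*\colon K_0(A_1)\to K_0(A_2)$, hence admits lifts to $K_0(C_\psi)$, and $\tilde w$ together with a choice of complement trivializing $F$ singles out one such lift (the standard assembly of $p,q,\tilde w$ and the complement into a relative $K_0$-cycle of the mapping cone, the rotation homotopy being used to connect the projections that occur); we set $\Theta[E,F,\varphi]$ to be its class. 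One then checks that $\Theta$ is a well defined homomorphism, independent of the amplification, of $w$, and of the complement, that it annihilates elementary cocycles, and that it is natural in $X$ since every ingredient pulls back along continuous maps. For bijectivity I would either produce an explicit inverse recovering a cocycle from the endpoint projections and the monodromy of a relative cycle (as in \cite[Chapter 2]{Kar}), or: prove directly, by Karoubi's elementary argument with the rotation homotopy and bundle complements, that the six term sequence in the statement is exact, observe that $\Theta$ and its odd analogue give a morphism from it to the six term sequence of $0\to C(X)\otimes SB_2\to C(X)\otimes C_\phi\to C(X)\otimes B_1\to0$ which is the identity on the four terms $K_*(C(X)\otimes B_i)$, and invoke the five lemma.

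Granting the isomorphism in both degrees, the six term exact sequence of the statement is just the $K$-theory sequence of $0\to C(X)\otimes SB_2\to C(X)\otimes C_\phi\to C(X)\otimes B_1\to0$, rewritten with the suspension (Bott) isomorphisms $K_j(C(X)\otimes SB_2)\cong K_{j+1}(C(X)\otimes B_2)$ and the standard identification of the connecting map of a mapping cone with $\phi_*$ up to sign; matching the inclusion and quotient maps with the asserted cocycle formulas $\hat r([u])=[(X\times B_1^n,X\times B_1^n,u)]$ and $\partial[E,F,\varphi]=[E]-[F]$ is then a direct unwinding of the definition of $\Theta$. I expect the main obstacle to be the third step, namely making the cocycle-level map $\Theta$ precise and proving it is well defined and bijective, equivalently carrying out Karoubi's relative $K$-theory argument in the setting of Hilbert $C^*$-module bundles, together with the sign bookkeeping in the Bott identifications; by comparison the group structure and the deduction of the exact sequence from the isomorphism are formal.
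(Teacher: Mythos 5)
Your proposal is correct and follows the same route the paper takes: the paper does not prove this proposition but defers to Karoubi \cite[Chapter 2]{Kar} for the basic theory and to Antonini--Azzali--Skandalis \cite[Section 2.3]{AAS} for the explicit isomorphism to $K_0(C_\phi)$, and your outline is a faithful reconstruction of those arguments, including the reduction to $\psi=\mathrm{id}_{C(X)}\otimes\phi$ via $C(X)\otimes C_\phi\cong C_\psi$, the passage from bundles to modules, the partial-isometry/unitary assembly of a mapping-cone $K_0$-class, and the derivation of the six-term sequence from $0\to C(X)\otimes SB_2\to C(X)\otimes C_\phi\to C(X)\otimes B_1\to0$ together with Bott periodicity. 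The alternative five-lemma strategy you mention for bijectivity is also standard and compatible with the cited sources.
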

\begin{remark} \label{relKthIsLikeKth}
It follows from this proposition that $K^*(X;\phi)$ has many properties in common with $K$-theory.  For example, this theory has a version of the Thom isomorphism, is a module over $K$-theory, and if $M$ is a compact $spin^c$-manifold then $K^*(M;\phi)$ satisfies a form of Poincare duality; namely, 
$$K^*(M;\phi) \cong KK^*(\field{C},C(M)\otimes C_{\phi}) \cong_{PD_M} KK^{*+dim(M)}(C(M),C_{\phi})$$
\end{remark}

\subsection{$K$-theory classes for the bordism type model}
In this section, we discuss the $K$-theory construction relevant for the second model for $KK(C(X),C_{\phi})$.  The results from this section will not be used until Section \ref{BorModel}.
\begin{define}
Let $W$ be a compact manifold with boundary (its boundary is denoted by $\partial W$) and $\phi:B_1\rightarrow B_2$ be a unital $*$-homomorphism.  Then 
$$C^*(W,\partial W;\phi):= \{ (f,g) \in C(W,B_2) \oplus C(\partial Q,B_1) \: | \: f|_{\partial Q} = \phi \circ g \}$$   
\end{define}
The reader should note that $C^*(W,\partial W;\phi)$ is a $C^*$-algebra and fits into the following pullback diagram:
\begin{center}
$\begin{CD}
C^*(W,\partial W;\phi) @>>> C(\partial W,B_1) \\
@VVV  @V\phi_* VV \\
C(W,B_2) @>|_{\partial}>> C(\partial W,B_2) \\ 
\end{CD}$
\end{center}

\begin{define}
Let $W$ be a compact manifold with boundary.  Then
$$K^0(W,\partial W;\phi):= \hbox{{\rm Grothendieck group}}([E_{B_2},F_{B_1},\alpha])$$
where 
\begin{enumerate}
\item $E_{B_2}$ is a finitely generated projective Hilbert $B_2$-module bundle over $W$;
\item $F_{B_1}$ is a finitely generated projective Hilbert $B_1$-module bundle over $\partial W$;
\item $\alpha$ is an isomorphism from $E_{B_2}|_{\partial W}$ to $\phi_*(F_{B_1}):=F_{B_1}\otimes_{\phi} B_2$.
\item $[\;\cdot\;]$ denotes taking the isomorphism class (the definition of isomorphism is given in \cite{Kar}; it is very similar to the definition of isomorphism for the cocycles considered in the previous section.
\end{enumerate}
\end{define}
The next two propositions are standard results. A proof of the first is given in \cite[Sections 2 and 3]{MilAlgK}; the reader is directed to \cite[Theorem 21.2.3]{Bla} or \cite[Exercise 4.10.22]{HR} for the second. In regards to applying these results, the reader should note that the map $C(W,B_2) \rightarrow C(\partial W,B_2)$ is onto.
\begin{prop}Let $W$ be a compact manifold with boundary. Then,
$$K^0(W,\partial W;\phi)\cong K_0(C^*(W,\partial W;\phi))$$
\end{prop}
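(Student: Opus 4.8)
The plan is to exhibit $C^*(W,\partial W;\phi)$ as a Milnor fibre product of $C^*$-algebras and then apply the patching description of finitely generated projective modules over such a pullback. By definition $C^*(W,\partial W;\phi)$ is the pullback in the square displayed above, i.e.\ the fibre product of
$$C(W,B_2) \xrightarrow{r} C(\partial W,B_2) \xleftarrow{\phi_*} C(\partial W,B_1),$$
where $r$ is the restriction map; as noted just before the statement, $r$ is surjective. Therefore Milnor's patching theorem (\cite[Sections 2 and 3]{MilAlgK}) applies and identifies the monoid of isomorphism classes of finitely generated projective modules over $C^*(W,\partial W;\phi)$ (under direct sum) with the monoid of isomorphism classes of triples $(P,Q,\beta)$, where $P$ is a finitely generated projective $C(W,B_2)$-module, $Q$ is a finitely generated projective $C(\partial W,B_1)$-module, and $\beta$ is an isomorphism of $C(\partial W,B_2)$-modules from $P\otimes_{r} C(\partial W,B_2)$ to $Q\otimes_{\phi_*} C(\partial W,B_2)$; the isomorphisms of triples are the pairs of module isomorphisms compatible with $\beta$.

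Next I would invoke the Serre--Swan-type identifications recalled in Section~2 (following \cite[Proposition 2.17]{Sch}): for a compact space $Y$ and a unital $C^*$-algebra $B$, taking sections gives an equivalence between finitely generated projective Hilbert $B$-module bundles over $Y$ and finitely generated projective $C(Y,B)$-modules, compatible with direct sums. Under these identifications (for $Y=W$, $B=B_2$; for $Y=\partial W$, $B=B_1$; and for $Y=\partial W$, $B=B_2$), base change along $r$ becomes restriction of bundles, $E_{B_2}\mapsto E_{B_2}|_{\partial W}$, and base change along $\phi_*$ becomes $F_{B_1}\mapsto \phi_*(F_{B_1})=F_{B_1}\otimes_\phi B_2$ — both checked fibrewise. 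Hence a Milnor triple $(P,Q,\beta)$ becomes exactly a cocycle $(E_{B_2},F_{B_1},\alpha)$ as in the definition of $K^0(W,\partial W;\phi)$, and isomorphism of triples becomes the notion of isomorphism of such cocycles (as in \cite{Kar}). Composing the two monoid isomorphisms and passing to Grothendieck groups gives
$$K^0(W,\partial W;\phi)\cong K_0(C^*(W,\partial W;\phi)).$$

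The point requiring care is the passage between Milnor's purely algebraic patching and the present topological, Hilbert-module setting. One must know that the finitely generated projective $C(W,B_2)$- and $C(\partial W,B_1)$-modules occurring in a triple are sections of \emph{locally trivial} module bundles (so that the output of patching is again cocycle data of the required form), and that Milnor's construction of a module over the fibre product is compatible, up to canonical isomorphism, with the two Serre--Swan equivalences. The first holds because $W$ and $\partial W$ are compact, so the relevant Serre--Swan theorem for $C^*$-module bundles yields precisely the locally trivial bundles; the second is a diagram chase. Beyond this bookkeeping there is nothing further to do: the proposition is exactly the instance of Milnor patching for the pullback defining $C^*(W,\partial W;\phi)$.
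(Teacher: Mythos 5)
Your proposal is correct and follows exactly the route the paper indicates: the paper does not write out a proof but cites \cite[Sections 2 and 3]{MilAlgK} (Milnor patching over the pullback square) together with the remark that the restriction $C(W,B_2)\to C(\partial W,B_2)$ is onto, and implicitly relies on the Serre--Swan identification from \cite[Proposition 2.17]{Sch} recalled earlier. Your write-up simply makes the intended citation explicit, including the translation from Milnor triples of projective modules to cocycles of Hilbert module bundles.
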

\begin{prop} \label{borModKthPullExtSeq}
Let $W$ be a compact manifold with boundary.  Then, the following sequence is exact: 
\begin{center}
$\begin{CD}
K_1(C^*(W,\partial W;\phi)) @>>> K_1(C(W,B_2))\oplus K_1(C(\partial W,B_1)) @>>> K_1(C(\partial W,B_2)) \\
@AAA @. @VVV \\
K_0(C(\partial W,B_2)) @<<<  K_0(C(W,B_2))\oplus K_0(C(\partial W,B_1)) @<<< K_0(C^*(W,\partial W;\phi)) 
\end{CD}$
\end{center}
\end{prop}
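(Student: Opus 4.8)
The plan is to recognize Proposition \ref{borModKthPullExtSeq} as an instance of the Mayer--Vietoris six term exact sequence in $K$-theory attached to the pullback square of $C^*$-algebras displayed just after the definition of $C^*(W,\partial W;\phi)$, and to supply the single hypothesis that makes that machinery run: surjectivity of one of the two maps into the lower right corner $C(\partial W,B_2)$.

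First I would verify that the restriction homomorphism $C(W,B_2)\to C(\partial W,B_2)$, $f\mapsto f|_{\partial W}$, is onto. Since $\partial W$ is a closed submanifold of $W$ admitting a collar neighborhood $\partial W\times[0,1)\hookrightarrow W$, any $g\in C(\partial W,B_2)$ extends across the collar by $(x,t)\mapsto g(x)$ and is then damped off to a global continuous $B_2$-valued function on $W$; equivalently one invokes Tietze extension for the metrizable space $W$ with values in the Banach space $B_2$. This is exactly the parenthetical remark in the text that ``the map $C(W,B_2)\to C(\partial W,B_2)$ is onto.'' Note the other leg $\phi_*:C(\partial W,B_1)\to C(\partial W,B_2)$ need not be surjective, which is why the restriction map is the one we use.

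With surjectivity in hand, I would apply the general fact recorded in \cite{Bla} and \cite{HR}: a pullback square of $C^*$-algebras in which one of the two maps to the corner is surjective induces a Mayer--Vietoris six term exact sequence in $K$-theory. Concretely, set $I=\ker\bigl(C(W,B_2)\to C(\partial W,B_2)\bigr)=C_0(W\setminus\partial W,B_2)$. The projection $(f,g)\mapsto g$ exhibits a short exact sequence
\[
0\to I\to C^*(W,\partial W;\phi)\to C(\partial W,B_1)\to 0 ,
\]
and restriction gives
\[
0\to I\to C(W,B_2)\to C(\partial W,B_2)\to 0 ,
\]
these two extensions sharing the ideal $I$ and being intertwined by the other projection $(f,g)\mapsto f$ of $C^*(W,\partial W;\phi)$ and by $\phi_*$. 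Splicing the two six term $K$-theory exact sequences along the common groups $K_*(I)$ and running the standard diagram chase yields the asserted sequence, with the connecting map $K_*(C(\partial W,B_2))\to K_{*+1}(C^*(W,\partial W;\phi))$ inherited from the index/exponential map of either extension.

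The steps above are essentially bookkeeping, so I expect the ``hard part'' to be purely organizational rather than conceptual: one must check that the middle map $K_i(C(W,B_2))\oplus K_i(C(\partial W,B_1))\to K_i(C(\partial W,B_2))$ really is the difference $(|_{\partial})_*-\phi_*$ of the two evident maps (so that the sequence is genuinely of Mayer--Vietoris type rather than merely exact after some reindexing), and that the connecting homomorphisms are compatible with the identification $K^0(W,\partial W;\phi)\cong K_0(C^*(W,\partial W;\phi))$ of the preceding proposition, so that the sequence can later be used interchangeably in either the algebraic or the bundle-theoretic picture. Once exactness is established at the three spots of one extension, exactness at the remaining three follows by symmetry of the construction.
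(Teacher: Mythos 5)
Your proposal is correct and follows exactly the route the paper intends: the paper simply cites the Mayer--Vietoris six-term exact sequence for pullback squares of $C^*$-algebras (referring to Blackadar, Theorem~21.2.3, and Higson--Roe, Exercise~4.10.22) and remarks that the hypothesis needed to apply it is the surjectivity of the restriction map $C(W,B_2)\to C(\partial W,B_2)$, which you verify via the collar/Tietze argument. Your sketch of how the Mayer--Vietoris sequence arises from splicing two short exact sequences with the common ideal $C_0(W\setminus\partial W,B_2)$ is a correct unpacking of that cited result.
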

\section{Model for $KK(C(X),C_{\phi})$ via relative $K$-theory} \label{KarModel}
As in the introduction, $X$ denotes a finite CW-complex, $\phi:B_1 \rightarrow B_2$ denotes a unital $*$-homomorphism (between unital $C^*$-algebras $B_1$ and $B_2$), and $C_{\phi}$ denotes the mapping cone of $\phi$. We note that the $C^*$-algebra $C_{\phi}$ is not unital. Hence the geometric model for K-homology discussed in \cite{Wal} can not be applied directly to obtain a model for $KK(C(X), C_{\phi})$; the goal of this section is the construction of a geometric model for this $KK$-theory group.
\subsection{Cycles and relations} 
\begin{define} \label{relKthCycMod}
A $\bar{K}$-cycle (over $X$ with respect to $\phi$) is a triple, $(M,[(E,F,\varphi)],f)$ where 
\begin{enumerate}
\item $M$ is a smooth compact $spin^c$-manifold; 
\item $[(E,F,\varphi)]$ is a class in $K^0(M;\phi)$;
\item $f:M \rightarrow X$ is a continuous map;
\end{enumerate}
\end{define}
In this section, we will refer to $\bar{K}$-cycles simply as cycles. There is a natural definition of isomorphism for such cycles (see page 75 of \cite{Wal} for details).  When we refer to a ``cycle", we will in fact be refering to an isomorphism class of a cycle.  We can add (isomorphism classes of) cycles using the disjoint union operation; that is, 
$$(M,[(E,F,\varphi)],f)+(M^{\prime},[(E^{\prime},F^{\prime},\varphi^{\prime})],f^{\prime})= (M\dot{\cup}M^{\prime},[(E,F,\varphi)]\dot{\cup}[(E^{\prime},F^{\prime},\varphi^{\prime})],f\dot{\cup}f^{\prime}) $$
Associated to a cycle is its ``opposite" which is obtained from the same data with the $spin^c$-structure on the manifold reversed; we denote $M$ with its opposite $spin^c$-structure by $-M$.
\begin{define}
A $\bar{K}$-bordism (over $X$ with respect to $\phi$) is given by $(W,[(E,F,\varphi)],g)$, where 
\begin{enumerate}
\item $W$ is a smooth compact $spin^c$-manifold with boundary;
\item $[(E,F,\varphi)]$ is a class in $K^0(W;\phi)$;
\item $g:W\rightarrow X$ is a continuous map;
\end{enumerate}
The boundary of a $\bar{K}$-bordism is given by
$$(\partial W, [(E|_{\partial W}, F|_{\partial W}, \varphi|_{\partial W}], f|_{\partial W})$$
If $(M_1, [E_1, F_1, \varphi_1], f_1) \dot{\cup} -(M_1, [E_1, F_1, \varphi_1], f_1)$ is the boundary of a $\bar{K}$-bordism, then we write
$$(M_1, [E_1, F_1, \varphi_1], f_1) \sim_{bor} (M_1, [E_1, F_1, \varphi_1], f_1)$$
\end{define}
\noindent
Often, in particular in this section, we refer to a $\bar{K}$-bordism simply as a bordism.
\begin{prop}
Bordism is an equivalence relation on the set of cycles.
\end{prop}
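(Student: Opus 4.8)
The plan is to verify the three defining properties of an equivalence relation --- reflexivity, symmetry, transitivity --- keeping in mind that a ``cycle'' here denotes an isomorphism class, so that all identifications are taken up to isomorphism of cycles, and using the standard facts $\partial(-W)=-(\partial W)$ and $-(-W)=W$ for a $spin^c$-manifold $W$ with boundary, together with functoriality of $K^0(\,\cdot\,;\phi)$ under restriction (which follows from Proposition~\ref{sixTermExactForRelKth}, or directly from the cocycle description of Section~\ref{KthKarRel}).

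For reflexivity, given a cycle $c=(M,[(E,F,\varphi)],f)$ I would take the bordism $(M\times[0,1],\pi^*[(E,F,\varphi)],f\circ\pi)$, where $M\times[0,1]$ carries the product $spin^c$-structure and $\pi\colon M\times[0,1]\to M$ is the projection. Restricting to $\partial(M\times[0,1])=M\times\{0\}\,\dot{\cup}\,M\times\{1\}$, the boundary of this bordism is identified with $c\,\dot{\cup}\,(-c)$ by the usual boundary orientation conventions, so $c\sim_{bor}c$. For symmetry, if $c_1\sim_{bor}c_2$ is witnessed by a bordism $(W,[(E,F,\varphi)],g)$ whose boundary is $c_1\,\dot{\cup}\,(-c_2)$, then $(-W,[(E,F,\varphi)],g)$ is again a bordism (the $K$-theory class and the map are unchanged, and the class restricts in the same way since it does not involve the $spin^c$-structure) with boundary $-(c_1\,\dot{\cup}\,(-c_2))=(-c_1)\,\dot{\cup}\,c_2=c_2\,\dot{\cup}\,(-c_1)$, using commutativity of disjoint union; hence $c_2\sim_{bor}c_1$.

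Transitivity is the substantive step. Given $c_i=(M_i,[(E_i,F_i,\varphi_i)],f_i)$ for $i=1,2,3$ and bordisms $W_{12}$, $W_{23}$ with boundaries $c_1\,\dot{\cup}\,(-c_2)$ and $c_2\,\dot{\cup}\,(-c_3)$, the copy of $M_2$ occurring in $\partial W_{12}$ carries the opposite $spin^c$-structure to the copy occurring in $\partial W_{23}$, so I would choose collar neighbourhoods of these two boundary components and glue $W_{12}$ to $W_{23}$ along $M_2$ via the diffeomorphism determined by the two cycle isomorphisms. This produces a smooth compact $spin^c$-manifold $W$ with $\partial W\cong M_1\,\dot{\cup}\,(-M_3)$: the orientation conventions match precisely because the two copies of $M_2$ are oppositely oriented, and no corners occur since $M_2$ is a union of connected components of each boundary. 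The module-bundle data on $W_{12}$ and on $W_{23}$ agree over $M_2$ (again via the cycle isomorphisms), so a clutching construction gives a class in $K^0(W;\phi)$ restricting correctly, and the two maps to $X$ likewise glue; the resulting bordism has boundary $c_1\,\dot{\cup}\,(-c_3)$, so $c_1\sim_{bor}c_3$.

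The main obstacle is entirely contained in this gluing: one must check carefully that the collar construction produces a genuine smooth $spin^c$-manifold and that the Hilbert module bundles $E,F$ with the isomorphism $\varphi$, which a priori match only up to the specified isomorphisms over $M_2$, can be clutched together consistently with $\varphi$. Both facts are standard in the Baum--Douglas framework (cf.\ \cite{BHS}) and carry the geometric content of the argument; everything else is bookkeeping with orientation conventions.
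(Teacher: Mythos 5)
Your proof is correct and follows the standard Baum--Douglas argument (reflexivity via the product bordism $M\times[0,1]$, symmetry via passing to $-W$, transitivity by gluing along the shared boundary component), which is exactly the argument the paper refers to when it cites Lemma 2.1.10 of \cite{Wal} without reproducing it. One point worth emphasizing that you correctly noted: since the boundary of a $\bar{K}$-bordism is the entire $\partial W$ (not a regular domain of $\partial W$ as in the bordism-type model of Section \ref{BorModel}), the gluing in the transitivity step is along whole boundary components, so the collar-neighbourhood construction suffices and no ``straightening the angle'' is needed --- in contrast with the analogous Proposition \ref{borRelMod2IsEqu} later in the paper, where that technique is required.
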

\begin{proof}
The proof is very similar to the proof in \cite[Lemma 2.1.10]{Wal}) and is left to the reader.
\end{proof}
\begin{define}
Let $(M,[E,F,\varphi],f)$ be a cycle and $V$ a $spin^c$-vector bundle of even rank over $M$.  Then the vector bundle modification of $(M,[E,F,\varphi],f)$ by $V$ is defined to be:
$$(M^V,\pi^*([(E,F,\varphi)])\otimes_{\field{C}}\beta_V,f\circ \pi) $$
where
\begin{enumerate}
\item ${\bf 1}$ is the trivial real line bundle over $M$ (i.e., $M\times \field{R}$).
\item $M^V=S(V\oplus {\bf 1})$ (i.e., the sphere bundle of $V\oplus {\bf 1}$).
\item $\beta_V$ is the ``Bott element" in $K^0(M^V)$ (see Section 2.5 of \cite{Rav} for the construction of this element).
\item $\otimes_{\field{C}}$ denotes the $K^0(M^V)$-module structure of $K^0(M^V;\phi)$.
\item $\pi:M^V \rightarrow M$ is the bundle projection.
\end{enumerate}
The vector bundle modification of a cycle $(M,[E,F,\varphi],f)$ by $V$ is often denoted by $(M,[E,F,\varphi],f)^V$.
\end{define}
A cycle, $(M,[E,F,\varphi],f)$, is called even (resp. odd) if the dimensions of the connected components of $M$ are all even (resp. odd) dimensional.
\begin{define}
Let $X$ be a finite CW-complex and $\sim$ be the equivalence relation generated by bordism and vector bundle modification (i.e., $(M,[E,F,\varphi],f)^V \sim (M,[(E,F,\varphi],f)$).  Then
\begin{eqnarray*}
\bar{K}_0(X;\phi) & := & \{ \hbox{ even cycles } \}/\sim \\
\bar{K}_1(X;\phi) & := & \{ \hbox{ odd cycles } \}/\sim
\end{eqnarray*}
\end{define}
\begin{prop}
If $X$ is a finite CW-complex, then $\bar{K}_*(X;\phi)$ is a graded abelian group.
\end{prop}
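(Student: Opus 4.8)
The plan is to verify the group axioms by the standard Baum--Douglas bookkeeping, adapted to the relative $K$-theory cycles $(M,[(E,F,\varphi)],f)$. First I would establish that addition via disjoint union is well-defined on $\sim$-classes: if $(M,[(E,F,\varphi)],f)\sim_{bor}(M',[(E',F',\varphi')],f')$ via a bordism $(W,[(\mathcal E,\mathcal F,\Phi)],g)$, then adjoining a fixed cycle $(N,[(G,H,\psi)],h)$ to both sides is witnessed by the bordism $(W\dot\cup (N\times[0,1]),\ldots)$, using that $K^0(\,\cdot\,;\phi)$ is functorial under restriction and that restriction of the product bordism to the two ends recovers the original cycle. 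Compatibility with vector bundle modification follows since $(\,\cdot\,)^V$ respects disjoint union and, by Remark~\ref{relKthIsLikeKth}, the $K^0$-module structure used to define modification is natural; hence $+$ descends to $\bar K_*(X;\phi)$. Commutativity is immediate from $M\dot\cup M'\cong M'\dot\cup M$, and associativity likewise; the grading is respected because disjoint union of even (resp.\ odd) cycles is even (resp.\ odd), and vector bundle modification by an \emph{even}-rank bundle preserves the parity of $\dim M$.

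Next I would produce the identity element. The claim is that the empty cycle (the cycle over the empty manifold) serves as $0$, and more usefully that any cycle of the form $(M,[(E,F,\varphi)],f)$ with $[(E,F,\varphi)]=0$ in $K^0(M;\phi)$ is already zero in $\bar K_*(X;\phi)$. For this one argues exactly as in the Baum--Douglas setting: if the $K$-theory class vanishes, then after stabilizing by an elementary cocycle (which is a bordism move, since $\varphi\simeq \mathrm{Id}$ means the cocycle extends over $M\times[0,1]$) one may assume $E=F$ with $\varphi$ literally the identity, and then $(M\times[0,1],\mathrm{pr}^*[(E,E,\mathrm{Id})],f\circ\mathrm{pr})$ is a bordism from this cycle to itself, i.e.\ from the cycle to the empty cycle after using the standard "collar" trick. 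I would lean on the cited \cite[Lemma 2.1.10]{Wal} for the analogous statement in the $B$-module setting, noting that the only new ingredient is that $K^0(M;\phi)$ is a $K^0(M)$-module with the expected naturality (Remark~\ref{relKthIsLikeKth}).

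The main point, and the step I expect to be the genuine obstacle, is the existence of inverses: $-(M,[(E,F,\varphi)],f) = (-M,[(E,F,\varphi)],f)$. The proof is the classical one: the cylinder $W=M\times[0,1]$ carries a $spin^c$-structure with $\partial W = M \dot\cup (-M)$, and the pulled-back cocycle $\mathrm{pr}^*[(E,F,\varphi)]$ makes $(W,\mathrm{pr}^*[(E,F,\varphi)],f\circ\mathrm{pr})$ a bordism exhibiting $(M,[(E,F,\varphi)],f)\dot\cup(-M,[(E,F,\varphi)],f)\sim_{bor}\varnothing$. The subtlety here is purely orientation-theoretic: one must check that reversing the $spin^c$-structure on a collar of the boundary of $M\times[0,1]$ matches, on the two ends, the cycle and its opposite as \emph{defined} above, and that the relative $K$-theory data restricts correctly to each end (which it does, since restriction to $M\times\{0\}$ and $M\times\{1\}$ both recover $[(E,F,\varphi)]$ and the cocycle definition of isomorphism is insensitive to the ambient cylinder). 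Since the $K$-theory class plays an entirely passive role under this cylinder bordism, the argument is identical to the one for ordinary geometric $K$-homology, and I would simply indicate that it proceeds as in \cite{BHS} or \cite[Lemma 2.1.10]{Wal} with $E$ replaced by $[(E,F,\varphi)]$ throughout. Combining the three steps gives that $\bar K_*(X;\phi)$ is a graded abelian group.
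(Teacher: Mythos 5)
Your core argument matches the paper's, which is just as terse as you might expect: disjoint union gives $\bar K_*(X;\phi)$ the structure of an abelian semigroup, any boundary (e.g.\ the empty cycle) represents the identity class, and the opposite cycle is an inverse via the cylinder bordism $M\times[0,1]$ with boundary $M\dot\cup -M$ carrying the pulled-back cocycle. Your first and third paragraphs spell this out correctly.

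However, the middle paragraph contains a genuine lapse. You assert that a cycle $(M,[(E,F,\varphi)],f)$ with $[(E,F,\varphi)]=0$ in $K^0(M;\phi)$ is already zero in $\bar K_*(X;\phi)$, and offer $(M\times[0,1],\mathrm{pr}^*[(E,E,\mathrm{Id})],f\circ\mathrm{pr})$ as a bordism ``from this cycle to the empty cycle.'' That bordism has boundary $M\dot\cup -M$, not $M$; it exhibits $(M,\xi,f)\dot\cup(-M,\xi,f)\sim_{bor}\varnothing$, which is exactly the inverse statement you prove later, and says nothing new. The claim that vanishing of the $K$-theory class forces the cycle to be zero geometrically is in fact true for this model, but it does not follow from the argument you sketch, and it would need a different justification (e.g.\ via the isomorphism with the analytic theory, or a more careful normal-bordism argument). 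Fortunately this side claim is not needed for the proposition: the empty cycle is a neutral element for disjoint union for trivial reasons, and together with the semigroup structure and the inverse from the cylinder, the group axioms are already verified. If you drop or correct that middle claim, the proof is sound and essentially identical to the paper's.
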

\begin{proof}
The disjoint union operation gives $\bar{K}_*(X;\phi)$ the structure of an abelian semi-group.  Any cycle which is a boundary (for example, the empty cycle) gives the identity (i.e., zero) class and the opposite of a cycle gives an inverse.
\end{proof}
The functorial properties of the group $\bar{K}_*(X;\phi)$ are similar to those of $K$-homology with coefficients in a $C^*$-algebra.  For example, if $Z$ is another finite CW-complex and $g:X\rightarrow Z$ is a continuous map, then $g_*(M,[E,F,\varphi],f):=(M,[E,F,\varphi],g\circ f)$.  

\subsection{Isomorphism with analytic theory} \label{isoKarTypSec}
To begin, note that associated to a geometric cycle (as in Definition \ref{relKthCycMod}) are the following $KK$-theory classes:
\begin{enumerate}
\item $[D_M]\in KK^{{\rm dim}(M)}(C(M),\field{C})$, the class of the Dirac operator on $M$;
\item $[(E,F,\varphi)]\in KK^0(\field{C},C(M)\otimes C_{\phi})\cong K^0(M;\phi)$;
\item $[f]\in KK(C(X),C(M))$;
\end{enumerate}
Combining these classes leads to the following map:
\begin{equation} \label{defAlpha}
\alpha: (M,(E,F,\varphi),f) \mapsto [f]\otimes_{C(M)} [(E,F,\varphi)]\otimes_{C(M)} \Delta
\end{equation}
where $\Delta$ is the image of the Dirac class under the map on $K$-homology induced from the diagonal inclusion of $M$ into $M\times M$; we denote the diagonal inclusion by diag$_M$. The map in Equation \ref{defAlpha} can also be written as 
$$\alpha: (M,[E,F,\varphi],f) \mapsto f_*(PD_M([E,F,\varphi]))$$  
where $PD_M$ denotes the Poincare duality map discussed in Remark \ref{relKthIsLikeKth}.
\begin{prop}
The map $\alpha: \bar{K}_*(X;\phi) \rightarrow KK^*(C(X),C_{\phi})$ (as defined in Equation \ref{defAlpha}) is well-defined.
\end{prop}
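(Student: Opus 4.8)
The map $\alpha$ is, as written, defined on individual cycles; the task is to show it factors through the equivalence relation $\sim$ defining $\bar{K}_*(X;\phi)$ and through the additive structure coming from disjoint union. Since the three $KK$-classes $[D_M]$, $[(E,F,\varphi)]\in K^0(M;\phi)$ and $[f]$ depend only on the isomorphism class of the cycle, $\alpha$ is already well-defined on isomorphism classes. Additivity, $\alpha(c_1\dot{\cup}c_2)=\alpha(c_1)+\alpha(c_2)$, is immediate from $C(M_1\dot{\cup}M_2)=C(M_1)\oplus C(M_2)$: each of the three classes splits as a direct sum across this decomposition and the Kasparov product in Equation \ref{defAlpha} is additive. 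Note also that $\alpha(-M,[E,F,\varphi],f)=-\alpha(M,[E,F,\varphi],f)$, since reversing the $spin^c$-structure negates the Dirac class (equivalently $PD_{-M}=-PD_M$). It therefore remains to check that $\alpha$ is unchanged under bordism and under vector bundle modification; for this it is convenient to use the alternative description $\alpha(M,[E,F,\varphi],f)=f_*\big(PD_M([E,F,\varphi])\big)$, with $PD_M\colon K^0(M;\phi)\xrightarrow{\cong}KK^{\dim M}(C(M),C_\phi)$ the duality of Remark \ref{relKthIsLikeKth} and $f_*$ induced by $f^*\colon C(X)\to C(M)$.

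Bordism invariance reduces to the assertion that $\alpha$ vanishes on boundaries: using additivity and the behaviour under $-$ just noted, if $c_1\dot{\cup}(-c_2)=\partial(W,[E,F,\varphi],g)$ then $\alpha(c_1)-\alpha(c_2)=\alpha\big(\partial(W,[E,F,\varphi],g)\big)$, so it suffices to show the right-hand side is $0$. Let $\iota\colon\partial W\hookrightarrow W$ be the inclusion and $\rho\colon K^0(W;\phi)\to K^0(\partial W;\phi)$ the restriction, so that $\alpha\big(\partial(W,[E,F,\varphi],g)\big)=g_*\,\iota_*\,PD_{\partial W}\big(\rho[E,F,\varphi]\big)$. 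The plan is to invoke the manifold-with-boundary (Poincaré–Lefschetz) form of the duality of Remark \ref{relKthIsLikeKth}: passing to the $KK$-picture $K^0(W;\phi)=KK^0(\field{C},C(W)\otimes C_\phi)$, the $spin^c$ fundamental class of $W$ relative to its boundary yields $PD_W\colon K^0(W;\phi)\xrightarrow{\cong}KK^{\dim W}(C_0(W\setminus\partial W),C_\phi)$, and naturality of fundamental classes (the $K$-theoretic form of $[\partial W]=\partial[W]$) makes the square with $\rho$ on top, $PD_W$ and $PD_{\partial W}$ on the sides, and the connecting homomorphism $\partial$ of the six-term sequence of $0\to C_0(W\setminus\partial W)\to C(W)\to C(\partial W)\to 0$ on the bottom commute. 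Hence $PD_{\partial W}(\rho[E,F,\varphi])$ lies in the image of $\partial$, which by exactness of that six-term sequence equals the kernel of $\iota_*\colon KK^*(C(\partial W),C_\phi)\to KK^*(C(W),C_\phi)$; so $\iota_*PD_{\partial W}(\rho[E,F,\varphi])=0$ and $\alpha\big(\partial(W,[E,F,\varphi],g)\big)=g_*(0)=0$.

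For invariance under vector bundle modification by an even-rank $spin^c$-bundle $V\to M$, write $\pi\colon M^V=S(V\oplus{\bf 1})\to M$ and let $\beta_V\in K^0(M^V)$ be the Bott element. The plan is to establish the identity
$$PD_M\big([E,F,\varphi]\big)=\pi_*\Big(PD_{M^V}\big(\pi^*[E,F,\varphi]\otimes_{\field{C}}\beta_V\big)\Big),$$
from which $\alpha$ of the modified cycle $(M^V,\pi^*[E,F,\varphi]\otimes_{\field{C}}\beta_V,f\circ\pi)$ equals $f_*\pi_*(\cdots)=\alpha(M,[E,F,\varphi],f)$. This is the $C_\phi$-twisted version of the classical fact that the $spin^c$ Dirac operator on the sphere bundle, twisted by the Bott class, maps under the Gysin homomorphism of $\pi$ to the Dirac operator on the base; it follows from Bott periodicity along the fibres together with naturality of $PD$, exactly as in the untwisted Baum–Douglas setting treated in \cite{BHS, Wal}. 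The one additional point is that multiplication by $\beta_V$, which is used in the definition of vector bundle modification via the $K^0(M^V)$-module structure on $K^0(M^V;\phi)$, is intertwined by $PD_{M^V}$ with the $KK^*(C(M^V),\field{C})$-module structure on $KK^*(C(M^V),C_\phi)$; this compatibility is part of the module structure over $K$-theory recorded in Remark \ref{relKthIsLikeKth}.

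The remaining ingredients — functoriality and additivity of the various Kasparov products, and commutativity of the naturality square in the bordism step — are routine. The one genuinely substantive input is the relative Poincaré–Lefschetz duality for $K^0(\cdot;\phi)$ on $spin^c$ manifolds with boundary together with its compatibility with restriction to the boundary; this is the step I would expect to take the most care to set up precisely, although it is standard once everything is phrased in terms of Kasparov's fundamental classes and $KK$-theoretic duality.
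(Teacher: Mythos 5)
Your proof is correct and follows essentially the same route as the paper: bordism invariance via the six-term exact sequence of $0\to C_0(W\setminus\partial W)\to C(W)\to C(\partial W)\to 0$ combined with compatibility of Poincar\'e duality with the boundary map (the paper phrases this as $\partial[D_W]=[D_M]$ and $[r]\otimes[\partial]=0$, and writes out the Kasparov product chain explicitly rather than invoking the commuting Lefschetz-duality square), and vector-bundle-modification invariance via the Gysin/Bott fact $s!=PD^{-1}_{M^V}s_*PD_M$ together with $\pi\circ s=\mathrm{id}_M$. The extra preliminary observations you make (well-definedness on isomorphism classes, additivity under disjoint union, behavior under $-$) are fine but left implicit in the paper.
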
  
\begin{proof}
We begin with the bordism relation. Let $(W,[(\tilde{E},\tilde{F},\tilde{\varphi})],\tilde{f})$ be a bordism with boundary $(M,[(E,F,\varphi)],f)$. Let $\partial$ denote the boundary map associated to the six-term exact sequence in analytic $K$-homology associated to the short exact sequence:
$$0 \rightarrow C_0(W) \rightarrow C(W) \rightarrow C(\partial W) \rightarrow 0$$
It is well known (see for example \cite[Theorem 3.5 iii)]{BHS}) that $\partial [D_W]=[D_M]$.  In addition, $f=\tilde{f}\circ r$ where $r:M\hookrightarrow W$ is the inclusion of the boundary.  Using the fact that $[r]\otimes [\partial]=0$ and basic properties of $KK$-theory, we obtain 
\begin{eqnarray*}
\alpha(M,[(E,F,\varphi)],f) & = & [f]\otimes_{C(M)} [(E,F,\varphi)]\otimes_{C(M)} [{\rm diag}_M] \otimes_{C(M)} [D_M]\\ 
& = & [\tilde{f}\circ r] \otimes_{C(M)} [E,F,\varphi]\otimes_{C(M)} [{\rm diag}_M] \otimes [\partial] \otimes [D_W] \\
& = & [\tilde{f}]\otimes [r] \otimes [\partial] \otimes [\tilde{E},\tilde{F},\tilde{\varphi}]\otimes [{\rm diag}_W] \otimes [D_W] \\
& = & 0
\end{eqnarray*}
Next, the case of vector bundle modification is considered.  Let $(M,[E,F,\varphi],f)$ be a cycle and $V$ a $spin^c$-vector bundle over $M$ with even-dimensional fibers.  Let $\pi:M^V\rightarrow M$ denote the projection map and $s:M\rightarrow M^V$ the inclusion of $M$ via the ``north pole".  Using standard results on wrong-way maps (see for example \cite{BOOSW}), we have that $s!=PD^{-1}_{M^V}s_* PD_{M}$.  All of this, along with the fact that $\pi \circ s=id_M$, leads to 
\begin{eqnarray*}
\alpha ((M,[E,F,\varphi],f)^V) & = & \alpha(M^V, s!([E,F,\varphi],f\circ \pi) \\  
& = & (f_* \circ \pi_*) PD_{M^V} s! ([E,F,\varphi]) \\
& = & f_* \circ ( \pi_* \circ s_*) PD_M ([E,F,\varphi]) \\
& = & f_* PD_M ([E,F,\varphi]) \\
& = & \alpha ((M,[E,F,\varphi],f))
\end{eqnarray*}
\end{proof}
Following \cite{BOOSW}, we introduce conditions which allow for the construction of an inverse to the map $\alpha$.  Namely, suppose that there exists compact $spin^c$-manifold $Z$ and continuous maps $h:X\rightarrow Z$ and $g:Z \rightarrow X$ such that $g\circ h$ is homotopic to the identity map on $X$.  We note that each finite CW-complex satisfies this condition (see \cite[Lemma 2.1]{BOOSW} for details).  The map from $KK^*(C(X),C_{\phi})\rightarrow \bar{K}_{*+1}(X;\phi)$ is defined at the level of cycles via
$$\beta: \xi \in KK(C(X),SC_{\phi}) \mapsto (Z, PD_{Z}^{-1}(h_*(\xi)) ,g)$$
Also, given a (smooth) emedding of $spin^c$-manifolds, let $g!$ denote the wrong-way map associated to $g$; the precise definition of this map can be found (for example) in \cite[Section 1.15]{Wal}. 
\begin{lemma} \label{embVBMlemForIso}
Suppose $g:N\rightarrow M$ is an embedding of compact $spin^c$-manifolds with the codimension of $g(N)$ even-dimensional.  Also, let $(M,[E,F,\varphi],f)$ be a cycle (in $\bar{K}_*(X;\phi)$).  Then
$$(M, g![(E,F,\varphi)],f) 
 \sim  (N,[(E,F,\varphi)],f\circ g)$$
where $g!$ denotes the wrong way map associated to the embedding $g$.
\end{lemma}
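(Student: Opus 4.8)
The plan is to realize the right‑hand cycle, up to one vector bundle modification, as a cycle concentrated on a tubular neighbourhood of $g(N)$ inside $M$, and then to join it to the left‑hand cycle by a bordism. Throughout, $[(E,F,\varphi)]$ denotes a class in $K^0(N;\phi)$ and $g!\colon K^0(N;\phi)\to K^0(M;\phi)$ is the wrong‑way map. First I would fix a closed tubular neighbourhood $\bar U$ of $g(N)$ in $M$ and, by the tubular neighbourhood theorem, identify it with the closed disk bundle $D(\nu)$ of the normal bundle $\nu$ of $g$; since $TN\oplus\nu\cong g^{*}TM$ and $N$, $M$ are $spin^c$, the bundle $\nu$ inherits a $spin^c$‑structure, and by hypothesis its rank is even. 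I would then use that, by construction, $g!$ is the composite of the relative Thom isomorphism $K^0(N;\phi)\xrightarrow{\cong}K^0(D(\nu),S(\nu);\phi)$ --- which exists because $K^{*}(\,\cdot\,;\phi)$ admits a Thom isomorphism (Remark~\ref{relKthIsLikeKth}) and is implemented by multiplication by the $K$‑theoretic Thom class $\tau_\nu$ --- with the excision isomorphism $K^0(D(\nu),S(\nu);\phi)\cong K^0(M,M\setminus\mathrm{int}\,\bar U;\phi)$ and the forgetful map to $K^0(M;\phi)$. In particular $g!([(E,F,\varphi)])$ has a representative that is trivialized over $P:=M\setminus\mathrm{int}\,\bar U$ and restricts over $\bar U\cong D(\nu)$ to $\tau_\nu\otimes_{\field{C}}\pi^{*}[(E,F,\varphi)]$, where $\pi\colon D(\nu)\to N$ is the projection.

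Next I would bring in vector bundle modification. By definition $(N,[(E,F,\varphi)],f\circ g)^{\nu}=(N^{\nu},\pi^{*}[(E,F,\varphi)]\otimes_{\field{C}}\beta_\nu,f\circ g\circ\pi)$ with $N^{\nu}=S(\nu\oplus{\bf 1})$, and this cycle is $\sim(N,[(E,F,\varphi)],f\circ g)$. Now $N^{\nu}$ is the union of its two hemisphere disk bundles, each a copy of $D(\nu)$, glued along the equatorial sphere bundle $S(\nu)$; the Bott element $\beta_\nu$ restricts over one hemisphere to (a representative of) $\tau_\nu$ and is trivialized over the other, and $f\circ g\circ\pi$ restricted to a hemisphere factors through the bundle projection. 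Comparing with the previous paragraph, on a common disk‑bundle piece the cycle $(N^{\nu},\pi^{*}[(E,F,\varphi)]\otimes_{\field{C}}\beta_\nu,f\circ g\circ\pi)$ agrees --- after a homotopy of the $K$‑theory data and of the reference map, using that $f|_{\bar U}$ is homotopic to $f\circ g$ precomposed with the retraction $\bar U\to N$ --- with $(M,g![(E,F,\varphi)],f)$ over $\bar U$, while the two cycles carry trivialized data on their respective complements (the other hemisphere of $N^{\nu}$, and $P$).

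It then remains to produce a $\bar K$‑bordism $W$ with $\partial W$ the disjoint union of $(M,g![(E,F,\varphi)],f)$ and $-(N^{\nu},\pi^{*}[(E,F,\varphi)]\otimes_{\field{C}}\beta_\nu,f\circ g\circ\pi)$; chaining it with the modification above then gives the assertion. I would assemble $W$ from the collar $P\times[0,1]$ --- glued along $S(\nu)\times[0,1]$ --- together with a standard $spin^c$ bordism supported over $N$ that carries the class $\tau_\nu$ times the pullback of $[(E,F,\varphi)]$ over the disk‑bundle part and is trivialized elsewhere, with reference map pulled back along the bundle projection there; "straightening the angle" (see \cite{CFPerMap} and \cite[Appendix]{Rav}) then smooths the corners into a genuine $spin^c$‑bordism of cycles. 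I expect this step to be the main obstacle. The naive construction forces a $spin^c$ null‑bordism of $M$ itself, so when the $spin^c$‑bordism class of $M$ is non‑trivial one first has to stabilize: modify both cycles by a trivial even‑rank bundle, replacing $M$ by $M\times S^{2m}$ and $N^{\nu}$ by $N^{\nu}\times S^{2m}$ --- which both bound --- and then run the construction. The remaining care is in transporting the $K$‑theory data and the $spin^c$‑structure across the bordism, in particular matching the $spin^c$‑structure on $\nu$ coming from the relative structure of $g$ with the one used in vector bundle modification; the tubular‑neighbourhood factorization of $g!$ and the relative Thom isomorphism of Remark~\ref{relKthIsLikeKth} are exactly what make this data compatible.
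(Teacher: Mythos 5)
The paper gives no proof of its own here (it defers to \cite[Lemma~2.3.4]{Wal}), so I am comparing your argument against the standard one. Your first two paragraphs --- the tubular neighbourhood factorization of $g!$, the relative Thom class $\tau_\nu$, and the vector bundle modification $(N,[(E,F,\varphi)],f\circ g)\sim(N^\nu,\pi^*[(E,F,\varphi)]\otimes\beta_\nu,f\circ g\circ\pi)$ --- are sound and are the right starting point. The gap is in the third paragraph. There is \emph{no} $\bar K$-bordism at all whose boundary is $M\,\dot\cup\,(-N^\nu)$ in general: $N^\nu=S(\nu\oplus{\bf 1})=\partial D(\nu\oplus{\bf 1})$ always bounds, so $[N^\nu]=0$ in $\Omega^{spin^c}_*$, whereas $[M]$ need not vanish (take $N$ a point in $M=\field{C}\mathrm{P}^2$: then $N^\nu=S^4$ but $\mathrm{Td}(\field{C}\mathrm{P}^2)=1$). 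Your intended $W=(P\times[0,1])\cup(\text{a piece over }N)$ in fact closes up so that \emph{both} of its resulting boundary components are copies of $M$, not $M$ and $N^\nu$. Stabilizing by $S^{2m}$ does not repair this: the same gluing still produces two copies of $M\times S^{2m}$, and the naive null-bordism $M\times D^{2m+1}$ cannot carry the needed $K$-theory class since $\beta_{S^{2m}}$ does not extend over $D^{2m+1}$. You flag the $K$-theory transport as ``the remaining care,'' but it is exactly where the argument breaks.

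The correct fix is to allow an \emph{extra} boundary component on the bordism rather than try to build one with boundary $M\,\dot\cup\,(-N^\nu)$. Embed $N^\nu$ in $M\times\field{R}$ as $\partial D(\nu\oplus{\bf 1})$, a tubular neighbourhood of $N\times\{0\}$, and set $Z:=(M\times[-2,2])\setminus\mathring D(\nu\oplus{\bf 1})$, so that $\partial Z = N^\nu\,\dot\cup\,(M\times\{2\})\,\dot\cup\,-(M\times\{-2\})$. Equip $Z$ with the class obtained as the wrong-way image of $[(E,F,\varphi)]$ under the neat embedding $N\times[1,2]\hookrightarrow Z$ and with the reference map induced by $f$. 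Restricting: on $M\times\{2\}$ this gives $g![(E,F,\varphi)]$; on $N^\nu$ it gives $s!([(E,F,\varphi)])=\pi^*[(E,F,\varphi)]\otimes\beta_\nu$, where $s$ is the north-pole section; on $M\times\{-2\}$ it gives $0$. Hence
$(M,g![(E,F,\varphi)],f)\,\dot\cup\,-(N^\nu,\pi^*[(E,F,\varphi)]\otimes\beta_\nu,f\circ g\circ\pi)\,\dot\cup\,-(M,0,f)$
bounds. Since $(M,0,f)\sim\emptyset$ (by direct sum plus cancellation in the group, or by one vector bundle modification and the bordism $M\times D^{2m+1}$ with zero data), one concludes. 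Your stabilization idea is roughly where this last triviality step lives, but in your write-up it is aimed at the wrong piece of the construction and does not supply the missing third boundary component.
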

\begin{proof}
The proof is essentially the same as the proof of Lemma 2.3.4 in \cite{Wal}; the details are left to the reader.  
\end{proof}
\begin{lemma}
Let $X$ be a compact Hausdorff space.  Then $\alpha \circ \beta = id$.  Moreover, if $X$ is a smooth compact $spin^c$-manifold and we take $Z=X$, $g=h=id$, then $\beta \circ \alpha =id$. 
\end{lemma}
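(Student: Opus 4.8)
The plan is to establish the two identities separately. The first, valid for any compact Hausdorff $X$, is essentially formal; the second, for $X$ a closed $spin^c$-manifold with $Z=X$ and $g=h=id_X$, is where the geometry enters, via Lemma \ref{embVBMlemForIso} together with an explicit bordism.

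\emph{The identity $\alpha\circ\beta=id$.} Let $\xi\in KK(C(X),SC_\phi)$. By definition $\beta(\xi)=(Z,PD_Z^{-1}(h_*(\xi)),g)$, so using $\alpha(M,[E,F,\varphi],f)=f_*(PD_M([E,F,\varphi]))$ one computes
$$\alpha(\beta(\xi))=g_*\big(PD_Z(PD_Z^{-1}(h_*(\xi)))\big)=g_*(h_*(\xi))=(g\circ h)_*(\xi).$$
Since $g\circ h$ is homotopic to $id_X$ and $KK$-theory is homotopy invariant, $(g\circ h)_*=id$, so $\alpha(\beta(\xi))=\xi$. The only point needing care is the degree shift $KK^{*+1}(C(X),C_\phi)\cong KK^*(C(X),SC_\phi)$, which is built into the definitions of $\alpha$ and $\beta$; no hypothesis on $X$ beyond compact Hausdorff is used.

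\emph{The identity $\beta\circ\alpha=id$ when $Z=X$, $g=h=id_X$.} Given a cycle $(M,[E,F,\varphi],f)$ we must show that $\big(X,\ PD_X^{-1}(f_*(PD_M([E,F,\varphi]))),\ id_X\big)\sim(M,[E,F,\varphi],f)$. First one may assume $f$ smooth: a smooth homotopy $H\colon M\times[0,1]\to X$ gives the $\bar K$-bordism $(M\times[0,1],\,\mathrm{pr}_M^*([E,F,\varphi]),\,H)$ between the corresponding cycles, so $\bar K_*(X;\phi)$—and likewise $\alpha$—is homotopy invariant. Next choose an embedding $\iota\colon M\hookrightarrow\mathbb{R}^{2k}$ with $k$ large, so that $\tilde j:=(f,\iota)$ is a smooth embedding of $M$ into $X\times\mathbb{R}^{2k}\subset X\times S^{2k}=:Y$ of even codimension (which the grading conventions permit), and note that $Y=X^{\underline{\mathbb{R}^{2k}}}$ is the vector bundle modification of $X$ by the trivial rank-$2k$ bundle. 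Writing $\pi\colon Y\to X$ for the bundle projection we have $\pi\circ\tilde j=f$, so Lemma \ref{embVBMlemForIso} applied to $\tilde j$ yields $(M,[E,F,\varphi],f)\sim(Y,\ \tilde j!([E,F,\varphi]),\ \pi)$.

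It then remains to push this cycle down from $Y$ to $X$. The assignment $y\mapsto(Y,y,\pi)$ is an additive map $K^0(Y;\phi)\to\bar K_*(X;\phi)$, and by the Künneth theorem $K^0(Y;\phi)=K^0(X\times S^{2k};\phi)$ is generated by classes of the form $\pi^*(a)$ and $\pi^*(b)\otimes_{\mathbb{C}}\beta_V$ with $a,b\in K^0(X;\phi)$. On a pulled-back class, $(Y,\pi^*(a),\pi)$ is the boundary of the $\bar K$-bordism $(X\times D^{2k+1},\,\mathrm{pr}_X^*(a),\,\mathrm{pr}_X)$, hence represents $0$; on a Bott multiple, $(Y,\pi^*(b)\otimes_{\mathbb{C}}\beta_V,\pi)$ is by definition the vector bundle modification of $(X,b,id_X)$, hence $\sim(X,b,id_X)$. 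Since $\pi!$ is additive with $\pi!(\pi^*(a))=0$ and $\pi!(\pi^*(b)\otimes_{\mathbb{C}}\beta_V)=b$ (the defining property of the Bott element), we get $(Y,y,\pi)\sim(X,\pi!(y),id_X)$ for every $y$. Applying this to $y=\tilde j!([E,F,\varphi])$, using functoriality $\pi!\circ\tilde j!=(\pi\circ\tilde j)!=f!$ of wrong-way maps, together with the standard identity $f!=PD_X^{-1}\circ f_*\circ PD_M$ for the $K$-oriented map $f$ (cf. \cite{BOOSW} and the formula for $\alpha$), we obtain
$$(M,[E,F,\varphi],f)\ \sim\ \big(X,\ f!([E,F,\varphi]),\ id_X\big)=\beta(\alpha(M,[E,F,\varphi],f)),$$
as desired. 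I expect the main obstacle to be precisely this last step—the passage from $Y$ down to $X$—which needs both the explicit disk bordism killing pulled-back classes and the Künneth splitting of $K^0(X\times S^{2k};\phi)$ into a pulled-back part and a Bott-multiple part; once these are in hand, the compatibilities among $f!$, $\pi!$, $\tilde j!$ and Poincaré duality are routine.
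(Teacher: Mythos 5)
Your argument is correct, and the first half (the computation of $\alpha\circ\beta$) is identical to the paper's. The second half, however, takes a genuinely different route at the crucial step. After applying Lemma \ref{embVBMlemForIso} once to reduce to a cycle of the form $(Y,\tilde j![E,F,\varphi],\pi)$ with $Y=X\times S^{2k}$, you establish the general fact that $(Y,y,\pi)\sim(X,\pi!(y),id_X)$ for \emph{every} $y\in K^0(Y;\phi)$, using a K\"unneth decomposition $K^0(X\times S^{2k};\phi)\cong K^0(X;\phi)\oplus K^0(X;\phi)\cdot\beta_V$, a disk bordism $(X\times D^{2k+1},\mathrm{pr}_X^*(a),\mathrm{pr}_X)$ killing the pulled-back summand, and the vector bundle modification relation for the Bott summand. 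The paper instead applies Lemma \ref{embVBMlemForIso} a \emph{second} time, to the ``north pole'' embedding $(id_X,0)\colon X\hookrightarrow X\times S$, together with homotopy invariance of wrong-way maps to replace $(f,e)!$ by $(f,0)!=(id_X,0)!\circ f!$. The paper's route is shorter because it never needs to understand $(Y,y,\pi)$ for arbitrary $y$: the relevant class $\tilde j![E,F,\varphi]$ already lies in the image of the section's Gysin map, so the embedding lemma applies directly. Your route is more work (it also silently uses additivity of $y\mapsto(Y,y,\pi)$, i.e.\ the ``disjoint union $=$ direct sum'' relation for $\bar K$-cycles, which the paper states only for the bordism-type cycles, and the K\"unneth theorem for $K^*(\cdot;\phi)$, both standard but unproved here), but it has the merit of proving a sharper statement — that the Gysin pushforward $\pi!$ realizes the vector bundle modification relation on all of $K^0(Y;\phi)$, not merely on Bott multiples — which is of independent interest.
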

\begin{proof}
The first equality follows from
\begin{eqnarray*}
\alpha(\beta(\xi)) & = & \alpha(Z, PD^{-1}_M(h^*(\xi)) , g) \\
& = & g_*((PD_M \circ PD^{-1}_M \circ h_*)(\xi))   \\
& = & (g_* \circ h_*)(\xi) \\
& = & \xi
\end{eqnarray*}
For the second equality, we have
\begin{eqnarray*}
\beta(\alpha(M,[(E,F,\varphi)],f)) & = & (X, (PD^{-1}_X \circ f_* \circ PD_X)([E,F,\varphi]),id_X) \\
& = & (X, f![(E,F,\varphi)],id_X) 
\end{eqnarray*}
Thus, the proof is reduced to showing that $(X, f![(E,F,\varphi)],id_X) \sim (M,[(E,F,\varphi)],f)$.  We would like to use Lemma \ref{embVBMlemForIso}.  However, in general, $f$ need not be an embedding.  To circumvent this problem, let $e:M \hookrightarrow S$ be an embedding of $M$ into a sphere of even (resp. odd) dimension if $M$ is even (resp. odd) dimensional.  Then 
$$ (f,e): M \hookrightarrow X\times S \hbox{ and } (id_X,0): X \hookrightarrow X\times S$$
are embeddings.  Moreover, $(f,e)$ is homotopic to $(f,0)$.  Using Lemma \ref{embVBMlemForIso} (twice), we obtain
\begin{eqnarray*}
(M,E,f) & \sim & (X\times S, (f,e)![E,F,\varphi],proj_X) \\
& \sim & (X\times S,(f,0)![E,F,\varphi],proj_X) \\
& \sim & (X,f![E,F,\varphi],id_X) 
\end{eqnarray*}
and hence the desired result.  
\end{proof}

\begin{theorem} \label{isoRelKtheoryCase}
If $X$ is a finite CW-complex, then $\alpha$ is an isomorphism.
\end{theorem}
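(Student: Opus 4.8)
The plan is to combine the pieces already in place. By the lemma immediately preceding this theorem, $\alpha$ has a right inverse $\beta$ whenever $X$ is compact Hausdorff, so $\alpha$ is automatically surjective when $X$ is a finite CW-complex; the content of the theorem is therefore the injectivity of $\alpha$. That same lemma also shows that $\alpha$ is an isomorphism whenever $X$ is a smooth compact $spin^c$-manifold (take the auxiliary manifold to be $X$ itself, with $g=h=\mathrm{id}$). So I would establish injectivity for a general finite CW-complex by pushing the problem onto a dominating manifold, on which $\alpha$ is already invertible.

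Concretely, choose --- as in the construction of $\beta$, and as guaranteed by \cite[Lemma 2.1]{BOOSW} --- a smooth compact $spin^c$-manifold $Z$ together with continuous maps $h:X\to Z$ and $g:Z\to X$ with $g\circ h$ homotopic to $\mathrm{id}_X$. Two routine facts are needed, both parallel to the Baum--Douglas case: first, $\bar{K}_*(-;\phi)$ is a covariant homotopy functor on finite CW-complexes --- homotopy invariance being the usual cylinder-bordism argument, a homotopy $f_0\simeq f_1$ over $M$ producing a bordism on $M\times[0,1]$ (with the $K$-theory datum pulled back along the projection to $M$) between $(M,[E,F,\varphi],f_0)$ and $(M,[E,F,\varphi],f_1)$; second, $\alpha$ is natural with respect to continuous maps, which is immediate from $\alpha(M,[E,F,\varphi],f)=f_*(PD_M([E,F,\varphi]))$. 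Writing $\alpha_X$ and $\alpha_Z$ for the map $\alpha$ over $X$ and over $Z$, naturality supplies a commuting square
\begin{center}
$\begin{CD}
\bar{K}_*(X;\phi) @>h_*>> \bar{K}_*(Z;\phi) \\
@VV\alpha_X V @VV\alpha_Z V \\
KK^*(C(X),C_{\phi}) @>h_*>> KK^*(C(Z),C_{\phi})
\end{CD}$
\end{center}
Since $g\circ h\simeq\mathrm{id}_X$, functoriality and homotopy invariance give $g_*\circ h_*=\mathrm{id}$ on $\bar{K}_*(X;\phi)$, so $h_*$ is injective there; and $\alpha_Z$ is an isomorphism because $Z$ is a smooth compact $spin^c$-manifold. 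Therefore, if $\alpha_X(\xi)=0$ then $\alpha_Z(h_*\xi)=h_*(\alpha_X(\xi))=0$, whence $h_*\xi=0$, whence $\xi=(g_*\circ h_*)(\xi)=0$. Thus $\alpha_X$ is injective, and with surjectivity it is an isomorphism.

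I expect no serious obstacle here: the substantive work has already been done in proving $\beta\circ\alpha=\mathrm{id}$ on manifolds, where Lemma \ref{embVBMlemForIso} and the device of embedding $M$ into a sphere carry the argument, and the two facts invoked above are genuinely routine. An alternative to the diagram chase would be to prove $\beta\circ\alpha=\mathrm{id}$ directly for finite CW-complexes by repeating that manifold computation with $h\circ f:M\to Z$ in the role of an embedding into the target; this also works, but it forces one to track the maps to $X$ (via $g\circ h\simeq\mathrm{id}_X$) and the codimension parities needed for Lemma \ref{embVBMlemForIso}, so the argument above is cleaner.
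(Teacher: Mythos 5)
Your proposal is correct and follows essentially the same strategy as the paper: surjectivity of $\alpha$ comes from the right inverse $\beta$, and injectivity is pushed onto the dominating $spin^c$-manifold $Z$ (where the preceding lemma gives an isomorphism) via a naturality square and the injectivity of $h_*$ on $\bar{K}_*(-;\phi)$, which in turn comes from $g_*\circ h_*=\mathrm{id}$ and homotopy invariance. The only cosmetic difference is the choice of diagram: the paper runs the chase through the vertical map $\beta\circ\alpha$ (using $\beta_Z\circ\alpha_Z=\mathrm{id}$ on the $Z$-column and inferring that $\beta\circ\alpha$ restricts to the identity on $X$), whereas you use the naturality square for $\alpha$ itself together with injectivity of $\alpha_Z$; your version has the minor advantage that the commutativity of the square you invoke is immediate from the formula $\alpha(M,[E,F,\varphi],f)=f_*(PD_M([E,F,\varphi]))$, with no need to check that $\beta\circ\alpha$ is natural in $X$.
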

\begin{proof}
The previous lemma implies that $\alpha$ has a right inverse (namely, $\beta$).  To see that $\beta$ is also a left inverse, consider the following commutative diagram:
\begin{center}
$\begin{CD}
\bar{K}_*(X;\phi) @>h_*>> \bar{K}_*(Z; \phi) \\
@VV\beta \circ \alpha V  @VV id V \\
\bar{K}_*(X;\phi) @>h_*>> \bar{K}_*(Z; \phi) 
\end{CD}$
\end{center}
\vspace{0.2cm}
where we have used the previous lemma to obtain that $\beta \circ \alpha : \bar{K}_*(Z;\phi) \rightarrow \bar{K}_*(Z ;\phi)$ is equal to the identity map.  It follows (using the fact that $h_*$ is injective) that $\beta \circ \alpha: \bar{K}_*(X;\phi) \rightarrow \bar{K}_*(X;\phi)$ is the restriction of the identity and hence {\it is} the identity map.
\end{proof}
The reader should recall (from the introduction) that if $B$ is a unital $C^*$-algebra, then $K_*(X;B)$ denotes the Baum-Douglas model of $KK^*(C(X),B)$ (see \cite{Wal} for details).
\begin{theorem} \label{exactSeq111}
If $X$ is a finite CW-complex, then the following sequence is exact:
\\
\begin{center}
$\begin{CD}
K_0(X;B_1) @>\phi_*>> K_0(X;B_2) @>\bar{r}>> \bar{K}_1(X;\phi) \\
@AA\bar{\delta} A @. @VV\bar{\delta} V \\
\bar{K}_0(X;\phi) @<\bar{r}<<  K_1(X;B_2) @<\phi_*<< K_1(X;B_1) 
\end{CD}$
\end{center}
\vspace{0.2cm}
where the maps are defined as follows:
\begin{enumerate}
\item $\phi_* : K_*(X;B_1) \rightarrow K_*(X;B_2)$ takes a cycle $(M,F,f)$ to $(M,\phi_*(F),f)=(M, F\otimes_{\phi}B_2, f)$. 
\item $\bar{r}: K_*(X;B_2) \rightarrow \bar{K}_{*+1}(X;\phi)$ takes a cycle $(M,E,f)$ to $(M\times S^1,\hat{r}(u),f\circ \pi)$ where 
\begin{eqnarray*}
u:\pi^*(E) & \rightarrow &  \pi^*(E) \\
 ((m,z), e) & \mapsto & ((m,z),z\cdot e), 
\end{eqnarray*}  
$\hat{r}$ is the map in the exact sequence in the statement of Proposition \ref{sixTermExactForRelKth}, and $\pi:M\times S^1 \rightarrow M$ is the projection map.
\item $\bar{\delta} : \bar{K}_*(X;\phi) \rightarrow K_*(X;B_1)$ takes a cycle $(M,[(E,F,\alpha)],f)$ to $(M,[E]-[F],f)$.  
\end{enumerate}
\end{theorem}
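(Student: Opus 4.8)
The plan is to obtain exactness of the geometric sequence by identifying it, via isomorphisms already in hand, with the analytic six-term sequence for the mapping cone $C_\phi$ displayed in the introduction. Theorem~\ref{isoRelKtheoryCase} provides an isomorphism $\alpha:\bar{K}_*(X;\phi)\xrightarrow{\cong}KK^*(C(X),C_\phi)$, and the Baum--Douglas model of \cite{Wal} provides isomorphisms $\mu:K_*(X;B_i)\xrightarrow{\cong}KK^*(C(X),B_i)$. Using these as the vertical maps, it suffices to check that the three squares relating $\phi_*$, $\bar{r}$ and $\bar{\delta}$ to the corresponding arrows of the mapping cone sequence commute up to sign (signs being harmless for exactness); exactness of the geometric sequence then follows at once, since all the vertical maps are isomorphisms. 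For the square involving $\phi_*$ there is nothing to prove beyond naturality of the Baum--Douglas isomorphism in the coefficient algebra (see \cite{Wal}): geometrically $\phi_*$ is the coefficient change $(M,F,f)\mapsto(M,F\otimes_\phi B_2,f)$, and the matching arrow of the mapping cone sequence is exactly the map induced by $\phi$.

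For $\bar{\delta}$ I would use the Poincare-duality description $\alpha(M,[(E,F,\varphi)],f)=f_*(PD_M([(E,F,\varphi)]))$ recorded just after Equation~\ref{defAlpha}, together with its Baum--Douglas analogue $\mu(M,E,f)=f_*(PD_M([E]))$. On $K$-theory cocycles the map $\bar{\delta}$ applies, over $M$, the Karoubi boundary map $K^0(M;\phi)\to K^0(M;B_1)$, $[(E,F,\varphi)]\mapsto[E]-[F]$, which under the Karoubi identification $K^0(M;\phi)\cong K_0(C(M)\otimes C_\phi)$ is the map induced by the quotient $*$-homomorphism $q_\phi:C_\phi\to B_1$ --- this is exactly the content of the formula for $\partial$ in Proposition~\ref{sixTermExactForRelKth}. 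Since $PD_M$ is Kasparov product with a fundamental class that does not involve the coefficient algebra, it is natural for coefficient $*$-homomorphisms and therefore intertwines this map with $(q_\phi)_*$. Combining, $\mu(\bar{\delta}(M,[(E,F,\varphi)],f))=f_*(PD_M([E]-[F]))=(q_\phi)_*\big(f_*(PD_M([(E,F,\varphi)]))\big)=(q_\phi)_*\big(\alpha(M,[(E,F,\varphi)],f)\big)$, and $(q_\phi)_*$ is precisely the corresponding arrow of the mapping cone sequence.

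The map $\bar{r}$ is where the real work lies, and I expect it to be the main obstacle. On the analytic side the relevant arrow of the mapping cone sequence is $\iota_*\circ\sigma$, where $\sigma:KK^*(C(X),B_2)\xrightarrow{\cong}KK^{*+1}(C(X),SB_2)$ is the suspension isomorphism and $\iota_*$ is induced by the inclusion $\iota:SB_2\hookrightarrow C_\phi$; so one must show $\alpha\circ\bar{r}=\iota_*\circ\sigma\circ\mu$. Two ingredients go into this. First, on $K$-theory data the geometric construction $E\mapsto(\pi^*E,u)$ (with $u=z\cdot\mathrm{id}$) realizes the suspension isomorphism: the class of $(\pi^*E,u)$ in $K_1(C(M\times S^1)\otimes B_2)$ lies in the direct summand $K_1(C(M)\otimes SB_2)$ and is there the image of $[E]$, being the exterior product of $[E]$ with the standard generator of $K_1(C(S^1))$. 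Second, Karoubi's $\hat{r}$, after the identification $K_1(C(X)\otimes B_2)\cong K_0(C(X)\otimes SB_2)$, is precisely the map induced by $\iota$; this is implicit in Proposition~\ref{sixTermExactForRelKth} and made explicit in the comparison of $K^*(\,\cdot\,;\phi)$ with mapping-cone $K$-theory in \cite[Section~2.3]{AAS} (see also \cite{Kar}). Assembling these through the Poincare-duality formula for $\alpha$ is then a matter of bookkeeping --- and this bookkeeping is the real obstacle: one must carefully track the extra $S^1$ factor and the unitary $u$ through $PD_{M\times S^1}$ and the push-forward $(f\circ\pi)_*$ in the definition of $\alpha$, check the compatibility of Poincare duality on $M\times S^1$ with the suspension, and confirm the degrees line up so that what comes out is $\iota_*\circ\sigma$ and not zero or a different direct summand. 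Once this is in place the two six-term sequences are identified and the theorem follows. (One can also verify the vanishing relations $\bar{\delta}\circ\bar{r}=0$, $\phi_*\circ\bar{\delta}=0$ and $\bar{r}\circ\phi_*=0$ directly on cycles, but the identification of $\bar{r}$ with the analytic map is needed in any case for the inclusions $\ker\subseteq\operatorname{im}$.)
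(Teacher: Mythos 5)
Your proposal follows essentially the same route as the paper: reduce to showing that the three squares commute between the geometric sequence and the analytic six-term sequence for $0 \to SB_2 \xrightarrow{\iota} C_\phi \xrightarrow{ev_0} B_1 \to 0$, using the isomorphisms $\alpha$ and $\mu_{B_i}$ as the vertical arrows, with naturality of Walter's isomorphism handling $\phi_*$, the Poincar\'e duality description of $\alpha$ handling $\bar{\delta}$, and the suspension/Karoubi-$\hat r$ description handling $\bar{r}$. The ``bookkeeping'' you flag for $\bar{r}$ is indeed the only nontrivial computation, and the paper carries it out in a short chain of identities tracking the $S^1$ factor and $u$ through $PD_{M\times S^1}$ exactly as you anticipate.
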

\begin{proof}
Theorem \ref{isoRelKtheoryCase} and \cite[Theorem 2.3.3]{Wal} imply that
$$K_*(X;B_i) \cong KK^*(C(X),B_i) \hbox{ and } \bar{K}_*(X;\phi)\cong KK^*(C(X),C_{\phi})$$
via the explicit maps 
$$ \mu_{B_i}:K_*(X;B_i) \rightarrow KK^*(C(X),B_i) \hbox{ and } \alpha: \bar{K}_*(X;\phi) \rightarrow KK^*(C(X),C_{\phi}) $$
This reduces the proof to showing that the following diagram commutes:
{\footnotesize \begin{center}
$\minCDarrowwidth10pt\begin{CD}
@>>> K_*(X;B_1) @>\phi_*>> K_*(X;B_2) @>\bar{r}>> \bar{K}_{*+1}(X;\phi) @>\bar{\delta}>> K_{*+1}(X;B_1) @>>> \\
@. @VV\mu_{B_1} V @VV\mu_{B_2} V @VV\alpha V @VV\mu_{B_1}V @.  \\
@>>> KK^*(C(X),B_1) @>\phi_*>> KK^*(C(X),B_2) @>>> KK^{*+1}(C(X),C_{\phi}) @>>> KK^{*+1}(C(X),B_1) @>>>
\end{CD}$
\end{center}}
\vspace{0.2cm}
\noindent
where the bottom long exact sequence is obtained from the following short exact sequence of $C^*$-algebras (see \cite[Section 15.3]{Bla}):
$$0\rightarrow SB_2 \stackrel{\iota}{\rightarrow} C_{\phi} \stackrel{ev_0}{\rightarrow} B_1 \rightarrow 0$$
That $\phi_* \circ \mu_{B_1}=\phi_* \circ \mu_{B_2}$ follows from \cite[Proposition 2.2.10]{Wal}.  The proof that $\mu_{B_1}\circ \bar{\delta} =(ev_0)_* \circ \alpha$ is as follows. Let $\Delta$ denote the class in $KK^{{\rm dim}(M)}(C(M)\otimes C(M), \field{C})$ which implements the map $PD_M$; that is, $PD_M : KK^*(\field{C}, C(M)) \rightarrow KK^{*+{\rm dim}(M)}(C(M), \field{C})$ is given by $x \mapsto x \otimes_{C(M)} \Delta$. Denoting the classes in $KK$-theory associated to $f$ and $ev_0$ respectively by $[f]$ and $[ev_0]$ and using basic properties of $KK$-theory, we have that
\begin{eqnarray*}
((ev_0)_* \circ \alpha)(M,[E,F,\varphi],f) & = & (ev_0)_* (f_*(PD_M([E,F,\varphi]))) \\
& = & [f]\otimes_{C(M)} ( [E,F, \varphi] \otimes_{C(M)} \Delta ) \otimes_{C_{\phi}} [ev_0] \\
& = & [f] \otimes_{C(M)} ( [E,F, \varphi] \otimes_{C(M)\otimes C_{\phi}} (\Delta \otimes_{\field{C}} [ev_0])) \\
& = & [f] \otimes_{C(M)} ( [E,F,\varphi] \otimes_{C(M)\otimes C_{\phi}} ([ev_0] \otimes_{\field{C}} \Delta)) \\
& = & f_*(PD_M((ev_0)_*([E,F,\varphi]))) \\
 & = & \mu_{B_1} (M, (ev_0)_* [E,F,\varphi],f) \\ 
& = & (\mu_{B_1} \circ \bar{\delta})(M,[E,F,\varphi],f)
\end{eqnarray*}   
Finally, we must show that $ \iota_* \circ \mu_{B_2}=\alpha \circ \bar{r}$.  Let $(M,E_{B_2},f)$ be a cycle in $K_*(X;B_2)$.  Then, again using the fact the $PD_M$ is implemented via an explicit KK-class and basic properties of $KK$-theory, we obtain
\begin{eqnarray*}
(\iota_* \circ \mu_{B_2})(M,E_{B_2},f) & = & \iota_* ( f_*(PD_M([E_{B_2}]))) \\
& = & \iota_* ( (f \circ \pi_{S^1})_*(PD_{M\times S^1} (\pi^*_{S^1}(E_{B_2})\otimes u))) \\
& = & (f_* \circ \pi_{S^1})_*(PD_{M\times S^1} (\hat{r}(\pi^*_{S^1}(E_{B_2})\otimes u))) \\
& = & (\alpha \circ \bar{r})(M,E_{B_2},f)
\end{eqnarray*}
where the reader should note that $u$ is defined in the statement of the theorem.
\end{proof}

\section{Bordism type model for $KK(C(X),C_{\phi})$}  \label{BorModel}
\subsection{Cycles and Relations}
Based on the discussion in the introduction, we need different geometric cycles to model certain constructions in geometric $K$-homology.  As such, in this section, we introduce an abelian group, $K_*(X;\phi)$, with the main result that this group fits into a six-term exact sequence (see Theorem \ref{bockTypeSeq}).  In fact, we define two types of cycles.  The first uses vector bundle data while the second uses $K$-theory data; a detailed development is only given in the case of the second model as the two cases are rather similar.  
\begin{define} {\bf Cycles with vector bundle data} \label{cycBunDat} \\ 
A cycle (over $X$ with respect to $\phi$ with bundle data) is given by, $(W, (E_{B_2},F_{B_1},\alpha),f)$, where 
\begin{enumerate}
\item $W$ is a smooth, compact $spin^c$-manifold with boundary;
\item $E_{B_2}$ is a finitely generated projective Hilbert $B_2$-module bundle over $W$;
\item $F_{B_1}$ is a finitely generated projective Hilbert $B_1$-module bundle over $\partial W$;
\item $\alpha : E_{B_2}|_{\partial W} \cong \phi_*(F_{B_1})$ is an isomorphism of Hilbert $B_2$-module bundles;
\item $f:W \rightarrow X$ is a continuous map.
\end{enumerate}
\end{define}
\begin{define} {\bf Cycles with $K$-theory data} \label{phiCyc} \\
A cycle (over $X$ with respect to $\phi$ with $K$-theory data) is a triple, $(W,\xi,f)$, where:
\begin{enumerate}
\item $W$ is a smooth, compact $spin^c$-manifold with boundary;
\item $\xi \in K^0(W,\partial W;\phi)$;
\item $f:W \rightarrow X$ is a continuous map.
\end{enumerate}
\end{define}
Often, in particular for the rest of this section, a cycle will refer to a cycle over $X$ with respect to $\phi$ with $K$-theory data; we also refer to ``cycles" in $K_*(X;B_i)$, but this should cause no confusion. The manifold, $W$, in a cycle need not be connected.  As such, a cycle is called even (resp. odd) if each of its connected components are even (resp. odd) dimensional.  We also let $\xi_{\partial W}$ and $\xi_{W}$ denote the images of $\xi$ under the maps $p_1:K^0(W,\partial W;\phi) \rightarrow K^0(\partial W;B_1)$ and $p_2:K^0(W,\partial W;\phi) \rightarrow K^0(W;B_2)$ respectively.  The opposite of a cycle, $(W,\xi,f)$, is the same data only $W$ is given the opposite $spin^c$-structure.  It is denoted by $-(W,\xi,f)$.  The disjoint union of cycles, $(W,\xi,f)$ and $(\tilde{W},\tilde{\xi},\tilde{f})$, is given by the cycle:
$$(W\dot{\cup}\tilde{W},\xi \dot{\cup} \tilde{\xi},f\dot{\cup} \tilde{f}) $$  
Two cycles, $(W,\xi,f)$ and $(\tilde{W},\tilde{\xi},\tilde{f})$, are isomorphic if there exists a diffeomorphism, $h:W \rightarrow \tilde{W}$, such that $h$ preserves the $spin^c$-structure, $h^*(\tilde{\xi})=\xi$, and $\tilde{f} \circ h=f$.  Throughout, a ``cycle" more precisely refers to an isomorphism class of a cycle.
\begin{define}
A regular domain, $Y$, of a manifold $M$ is a closed submanifold of $M$ such that
\begin{enumerate}
\item ${\rm int}(Y)\ne \emptyset$;
\item If $p\in \partial Y$, then there exists a coordinate chart, $\phi:U \rightarrow \field{R}^n$ centered at $p$ such that $\phi(Y \cap U)=\{ x \in \phi(U) \: | \: x_n\ge 0 \} $. 
\end{enumerate}
\end{define}
\begin{define}
A bordism (with respect to $X$ and $\phi$ with $K$-theory data) is given by $(Z,W,\eta,F)$ where 
\begin{enumerate}
\item $Z$ is a compact $spin^c$-manifold;
\item $W \subseteq \partial Z$ is a regular domain;
\item $\eta \in K^0(Z,\partial Z - {\rm int}(W);\phi)$;
\item $g:Z \rightarrow X$ is a continuous map.  
\end{enumerate}
\label{borBou}
\end{define}
\begin{remark} \label{phiBor}
The ``boundary" of a bordism, $(Z,W,\eta,F)$, is given by $(W,\eta|_W,g|_W)$.  The fact that $W$ is a regular domain of $\partial Z$ ensures the boundary is indeed a cycle (as in Definition \ref{phiCyc}).  Moreover, if $(W,\xi,f)$ is a boundary in the sense of Definition \ref{borBou}, then $(\partial W, \xi_{B_1},f|_{\partial W})$ is a boundary as a cycle in $K_*(X;B_1)$.  
\end{remark}
Often, we will refer to a bordism with respect to $X$ and $\phi$ with $K$-theory data simply as a bordism; the context should make it clear with respect to which type of cycles (e.g., $K_*(X;B)$, $\bar{K}(X;\phi)$, etc) the bordism is related.
\begin{define}
Two cycles are bordant if there exists bordism with boundary given by $(W,\xi,f) \dot{\cup} -(W^{\prime},\xi^{\prime},f^{\prime})$.  This relation is denoted by 
$$(W,\xi,f) \sim_{bor} (W^{\prime},\xi^{\prime},f^{\prime})$$
The similarity between the notation of this definition and that of the definition of the bordism relation in the previous section should cause no confusion.
\end{define}
\begin{prop} \label{borRelMod2IsEqu}
The relation $\sim_{bor}$ is an equivalence relation.
\end{prop}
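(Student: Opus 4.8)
The plan is to verify the three axioms of an equivalence relation by producing, in each case, an explicit bordism in the sense of Definition~\ref{borBou}; each step proceeds exactly as the corresponding step in \cite[Lemma 2.1.10]{Wal} and \cite[Chapter 1]{Sto}, the only new bookkeeping being the regular-domain structure and the relative $K$-theory class. For reflexivity, given a cycle $(W,\xi,f)$ I would take the cylinder $Z=W\times[0,1]$ with the product $spin^c$-structure. A priori this is a manifold with corners along $\partial W\times\{0,1\}$, so I would straighten the angle (see \cite{CFPerMap}, \cite[Appendix]{Rav}) to obtain a smooth compact $spin^c$-manifold whose boundary is $(W\times\{0\})\cup_{\partial W}(\partial W\times[0,1])\cup_{\partial W}(W\times\{1\})$. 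Taking $W\times\{0,1\}$ as the regular domain, one has $\partial Z-\mathrm{int}(W\times\{0,1\})=\partial W\times[0,1]$, and $(Z,\ W\times\{0,1\},\ \mathrm{pr}_W^*(\xi),\ f\circ\mathrm{pr}_W)$ is a bordism whose boundary (by Remark~\ref{phiBor}) is a genuine cycle; with the outward-normal convention the two copies of $W$ receive opposite $spin^c$-structures, so this boundary is $(W,\xi,f)\dot{\cup}-(W,\xi,f)$, giving $(W,\xi,f)\sim_{bor}(W,\xi,f)$. Symmetry is immediate: if $(Z,V,\eta,g)$ is a bordism with boundary $(W,\xi,f)\dot{\cup}-(W',\xi',f')$, then reversing the $spin^c$-structure of $Z$ turns it into a bordism $(-Z,V,\eta,g)$ with boundary $-(W,\xi,f)\dot{\cup}(W',\xi',f')$, which witnesses $(W',\xi',f')\sim_{bor}(W,\xi,f)$; only the standard behaviour of induced $spin^c$-structures under $Z\mapsto-Z$ is involved.

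For transitivity, suppose $(W,\xi,f)\sim_{bor}(W',\xi',f')$ via a bordism $(Z_1,W\dot{\cup}W',\eta_1,g_1)$ and $(W',\xi',f')\sim_{bor}(W'',\xi'',f'')$ via $(Z_2,W'\dot{\cup}W'',\eta_2,g_2)$. By the definition of $\sim_{bor}$ the copy of $W'$ in $\partial Z_1$ carries the $spin^c$-structure opposite to that of the copy in $\partial Z_2$, so I can form $Z:=Z_1\cup_{W'}Z_2$; after straightening the angle along $\partial W'$ this is a smooth compact $spin^c$-manifold, $g_1$ and $g_2$ patch to a continuous $g:Z\to X$ (they agree on $W'$, both restricting to $f'$), the regular domain $W\dot{\cup}W''\subseteq\partial Z$ has $\partial Z-\mathrm{int}(W\dot{\cup}W'')$ obtained by gluing $\partial Z_1-\mathrm{int}(W\dot{\cup}W')$ to $\partial Z_2-\mathrm{int}(W'\dot{\cup}W'')$ along $W'$. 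Since $\eta_1$ and $\eta_2$ have the same restriction over $W'$, namely $\xi'$, the Mayer--Vietoris sequence for the functor $(Z,Y)\mapsto K^0(Z,Y;\phi)\cong K_0(C^*(Z,Y;\phi))$ --- whose exactness rests on the pullback description behind Proposition~\ref{borModKthPullExtSeq} together with the surjectivity of the relevant restriction maps $C(-,B_2)\to C(-,B_2)$ --- yields a class $\eta\in K^0(Z,\partial Z-\mathrm{int}(W\dot{\cup}W'');\phi)$ with $\eta|_{Z_1}=\eta_1$ and $\eta|_{Z_2}=\eta_2$, hence $\eta|_W=\xi$ and $\eta|_{W''}=\xi''$. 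Then $(Z,W\dot{\cup}W'',\eta,g)$ is the required bordism, and $\sim_{bor}$ is transitive.

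The genuine content is concentrated in the transitivity step, in two patching issues: (i) straightening the angle so that $Z_1\cup_{W'}Z_2$ is a smooth $spin^c$-manifold with boundary, and (ii) the Mayer--Vietoris/clutching argument guaranteeing that the relative $K$-theory data glue to a class with the correct restrictions. Neither is new --- both are handled, in the vector-bundle setting, in \cite[Lemma 2.1.10]{Wal} and \cite[Chapter 1]{Sto} --- so the proof reduces to assembling these ingredients and checking the routine orientation bookkeeping; for this reason a paper at this level would legitimately leave the verification to the reader.
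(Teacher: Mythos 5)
Your proof is correct and follows essentially the same route as the paper: the cylinder $W\times[0,1]$ (smoothed via straightening the angle) for reflexivity, reversal of the $spin^c$-structure for symmetry, and gluing $Z_1\cup_{W'}Z_2$ along the shared copy of $W'$ for transitivity. The only place you go further than the paper is the transitivity step: where the paper simply writes $\nu=\nu_0\cup_{W_1}\nu_1$ without comment, you flag that the relative $K$-theory data need only agree as \emph{classes} over $W'$, not as cocycles, and you invoke a Mayer--Vietoris argument (resting on the pullback $C^*$-algebra description) to produce the glued class $\eta$. This is a genuine point the paper elides, and your treatment of it is the right kind of argument; one small slip is that the glued piece $\partial Z - \mathrm{int}(W\dot{\cup}W'')$ is obtained by joining the two complements along $\partial W'$, not along $W'$ (which lies in the interior of $Z$), but this does not affect the conclusion.
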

\begin{proof}
We begin with reflexivity.  Results in \cite[Sections I.3 and I.4]{CFPerMap}  imply that there exists a smooth $spin^c$-manifold with boundary, $Z$, which is homeomorphic (via $h$) to $W\times [0,1]$.  Moreover, $W \dot{\cup} -W$ is a regular domain of the boundary of this manifold.  Thus, if $\pi_W:W\times [0,1]\rightarrow W$ is the projection onto $W$, then 
$$(Z, W\dot{\cup} -W, h^*(\pi_W^*(\xi)) , f\circ \pi_W \circ h) $$
forms a bordism.  Moreover, it has boundary (in the sense of Definition \ref{borBou}) $(W,\xi,f) \dot{\cup} -(W,\xi,f)$.  In other words,
$$(W,\xi,f)\sim_{bor} (W,\xi,f)$$
\par
That bordism is symmetric is trivial. \par
For transitivity, let $\{(W_i,\xi_i,f_i)\}_{i=0}^{2}$ be cycles and $(Z_0,W_0 \dot{\cup} -W_1,\nu_0,F_0)$ and $(Z_1,W_1 \dot{\cup} -W_2, \nu_1,F_1)$ be bordisms from $(W_0,\xi_0,f_0)$ to $(W_1,\xi_1,f_1)$ and $(W_1,\xi_1,f_1)$ and $(W_2,\xi_2,f_2)$ respectively.  Then, by ``straightening the angle" (see \cite{CFPerMap}), $Z_0\cup_{W_1}Z_1$ can be given the structure of a smooth $spin^c$-manifold.  Also, let $F=F_0\cup_{W_1}F_1$ and $\nu=\nu_0\cup_{W_1}\nu_1$.  Then, $(Z,W_0 \dot{\cup}W_2,\nu,F)$ forms a bordism from $(W_0,\xi_0,f_0)$ to $(W_2,\xi_2,f_2)$. 
\end{proof}
\begin{define}
Let $(W,\xi,f)$ be a cycle and $E$ a $spin^c$-vector bundle of even rank over $W$.  Then the vector bundle modification of $(W,\xi,f)$ by $E$ is defined to be:
$$(W^E,\pi^*(\xi)\otimes_{\field{C}}\beta_E,f\circ \pi) $$
where
\begin{enumerate}
\item ${\bf 1}$ is the trivial real line bundle over $W$ (i.e., $W\times \field{R}$);
\item $W^E=S(E\oplus {\bf 1})$ (i.e., the sphere bundle of $E\oplus {\bf 1}$);
\item $\beta_E$ is the ``Bott element" in $K^0(W^E)$ (see \cite[Section 2.5]{Rav} for the construction of this element);
\item $\otimes_{\field{C}}$ denotes the $K^0(W^E)$-module structure of $K^0(W^E,\partial W^E;\phi)$;
\item $\pi:W^E \rightarrow W$ is the bundle projection.
\end{enumerate}
The vector bundle modification of $(W,\xi,f)$ by $E$ is often denoted by $(W,\xi,f)^E$.
\end{define}  
\begin{remark} \label{phiVBM}
If $(W,\xi,f)$ is a cycle and $E$ is a $spin^c$-vector bundle of even rank over $W$, then $(\partial W, \xi_{B_{1}},f|_{\partial W} )^{E|_{\partial W}}= \partial (W,\xi,f)^E$.
\end{remark}
\begin{define} \label{equCyc}
Let $\sim$ be the equivalence relation generated by bordisms and vector bundle modification (i.e., $(W,\xi,f)\sim (W,\xi,f)^E$, for any even rank $spin^c$-vector bundle, $E$, over $W$).  Also let
$$K_*(X;\phi)=\{ (W,\xi,f) \}/\sim$$
The grading is given as follows.  A cycle $(W,\xi,f)$ is said to be even (resp. odd) if the connected components of $W$ are all even (resp. odd) dimensional.  Then, $K_0(X;\phi)$ is even cycles modulo $\sim$ and $K_1(X;\phi)$ is likewise only with odd cycles; the relation $\sim$ preserves this grading.
\end{define}
\begin{prop}
Let $(W,\xi_1,f)$ and $(W,\xi_2,f)$ be cycles.  Then
$$(W,\xi_1,f)\dot{\cup}(W,\xi_2,f) \sim (W,\xi_1\oplus \xi_2, f)$$
\end{prop}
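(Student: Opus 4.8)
The plan is to exhibit an explicit bordism realizing the relation, mimicking the standard Baum–Douglas argument that $(W,E_1,f)\dot\cup(W,E_2,f)\sim(W,E_1\oplus E_2,f)$ but now keeping careful track of the relative $K$-theory data $\xi\in K^0(W,\partial W;\phi)$. The key geometric object is the cylinder $Z=W\times[0,1]$, a compact $spin^c$-manifold with boundary whose boundary decomposes as $(W\times\{0\})\dot\cup(-W\times\{1\})$, together with the "collar" corner along $\partial W\times[0,1]$; by straightening the angle (as in \cite{CFPerMap} and \cite[Appendix]{Rav}, already invoked in the proof of Proposition \ref{borRelMod2IsEqu}) we may regard $Z$ as a smooth $spin^c$-manifold with boundary. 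The regular domain $W\subseteq\partial Z$ to be used is $W\times\{0\}$ together with $-W\times\{1\}$, i.e.\ the two "end" copies, so that $\partial Z-\mathrm{int}(W)=\partial W\times[0,1]$. The map is $f\circ\pi_W\circ h$ where $\pi_W:W\times[0,1]\to W$ is the projection and $h$ is the straightening diffeomorphism, exactly as in Proposition \ref{borRelMod2IsEqu}.

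First I would address the $K$-theory data. We are given $\xi_1,\xi_2\in K^0(W,\partial W;\phi)$, and $K^0(W,\partial W;\phi)$ is a Grothendieck group whose addition is direct sum of cocycles $[E_{B_2},F_{B_1},\alpha]$ (Definition following Proposition \ref{borModKthPullExtSeq}); so $\xi_1\oplus\xi_2$ is literally $\xi_1+\xi_2$ in this group, which is the content of the Proposition but at the level of classes on a single manifold. On the cylinder $Z$, I would take the pullback class $\pi_W^*(\xi_1\oplus\xi_2)\in K^0(Z,\partial Z-\mathrm{int}(W);\phi)$ — here one uses that $\pi_W$ restricts on $\partial W\times[0,1]$ to the projection $\partial W\times[0,1]\to\partial W$, so pulling back a relative class against $\partial W$ lands in the correct relative group $K^0(Z,\partial Z-\mathrm{int}(W);\phi)$, after transporting along $h$ to make everything smooth. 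This produces a genuine bordism datum $\eta=h^*(\pi_W^*(\xi_1\oplus\xi_2))$ in the sense of Definition \ref{borBou}.

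Then I would compute the boundary of this bordism in the sense of Remark \ref{phiBor}: restricting $\eta$ to $W\times\{0\}$ gives $\xi_1\oplus\xi_2$ (both $\xi_1$ and $\xi_2$ survive), while restricting to $-W\times\{1\}$ gives $-(\xi_1\oplus\xi_2)$ on the opposite $spin^c$-manifold. That is not yet what we want; the trick, as in the classical case, is to choose the relative class on the cylinder more cleverly: use a class that interpolates between $\xi_1\oplus\xi_2$ on one end and $\xi_1\dot\cup\xi_2$ — i.e.\ the disjoint-union cycle — on the other. Concretely I would instead form the bordism on $Z=(W\times[0,1])\dot\cup(W\times[0,1])$ (two cylinders) with $W\subseteq\partial Z$ taken to be the three copies $W\times\{0\}$ (first cylinder), $W\times\{0\}$ (second cylinder), and $-W\times\{1\}$ (glued from both), and put on it the class that is $\pi_W^*\xi_1$ on the first cylinder, $\pi_W^*\xi_2$ on the second, and whose restriction to the common end $-W\times\{1\}$ is $-(\xi_1\oplus\xi_2)$; existence of such a global relative class follows from the pullback/exactness description of $K^0(Z,\partial Z-\mathrm{int}(W);\phi)$ in Proposition \ref{borModKthPullExtSeq}, since on the overlap the two restrictions agree with $\xi_1$ and $\xi_2$ separately. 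Its boundary is then exactly $\bigl((W,\xi_1,f)\dot\cup(W,\xi_2,f)\bigr)\dot\cup-\bigl(W,\xi_1\oplus\xi_2,f\bigr)$, which is the desired relation.

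The main obstacle I anticipate is purely bookkeeping rather than conceptual: verifying that the relative $K$-theory class assembled piecewise on the (glued, angle-straightened) manifold $Z$ genuinely defines an element of $K^0(Z,\partial Z-\mathrm{int}(W);\phi)$ restricting correctly on every boundary face — in particular that the isomorphism part $\alpha$ of the cocycle $[E_{B_2},F_{B_1},\alpha]$ is compatible across the corner $\partial W\times[0,1]$. This is handled exactly as the gluing of $\nu_0\cup_{W_1}\nu_1$ in the transitivity part of Proposition \ref{borRelMod2IsEqu}, so I would simply cite that argument. The geometric input (cylinder, straightening the angle, regular domains) is all already in place, and the identification $\xi_1\oplus\xi_2=\xi_1+\xi_2$ is immediate from the Grothendieck-group definition, so the proof is short once the gluing of the relative class is granted.
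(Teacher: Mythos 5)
Your proposal has a genuine gap at its core, and the gap is not "bookkeeping": the bordism you try to exhibit does not exist, and no purely cylindrical construction can produce it.

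The precise failure is in your second construction. You set $Z=(W\times[0,1])\dot\cup(W\times[0,1])$ and then speak of a boundary piece $-W\times\{1\}$ ``glued from both,'' but these two requirements are contradictory: a disjoint union has no shared face, and if you do glue the two cylinders along $W\times\{1\}$ then (after straightening the angle) $Z$ is diffeomorphic to a single $W\times[0,1]$, in which the middle copy of $W$ is interior and hence not available as part of a regular domain in $\partial Z$. Independently of the topology, the $K$-theory gluing you invoke fails: the class is $\pi_W^*\xi_1$ on the first cylinder and $\pi_W^*\xi_2$ on the second, so near the putative gluing locus $W\times\{1\}$ the two restrictions are $\xi_1$ and $\xi_2$, which are not equal in general; the Mayer--Vietoris-type exactness of Proposition \ref{borModKthPullExtSeq} only produces a global class when the restrictions \emph{do} agree on the overlap. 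So ``the two restrictions agree with $\xi_1$ and $\xi_2$ separately'' is precisely the obstruction, not the reason it works.

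There is a more structural reason a direct bordism argument must fail. A bordism $(Z,W\dot\cup W\dot\cup -W,\eta,F)$ realizing the stated relation in one step would have $W\dot\cup W\dot\cup -W$ as a regular domain of $\partial Z$; in particular $W$ itself would have to bound (relative to its own boundary) in $spin^c$-bordism, which is false for general $W$. This is exactly why the direct-sum/disjoint-union identity is \emph{not} one of the generating relations in this model (unlike the original Baum--Douglas axioms) and has to be derived using vector bundle modification. The paper's proof does this: it first modifies by a trivial rank-$2$ bundle to replace $W$ by $W\times S^2$ (and $\xi_i$ by $\pi^*\xi_i\otimes\beta$), and only then constructs a bordism. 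The point of the modification is that $S^2=\partial D^3$, so $W\times S^2$ does bound; the bordism is (up to the bundle data coming from Proposition~4.3.2 of \cite{Rav}) $W$ crossed with a $3$-ball with two smaller open balls removed, a genuine ``pair of pants'' for $S^2$'s, and the Bott element $\beta$ is exactly what makes the $K$-theory data clutch correctly across the three spherical boundary faces. Your cylinders $W\times[0,1]$ cannot play this role because their boundary contains only two copies of $W$, one of each orientation, and adding more cylinders never changes the parity. To repair the proof you should follow the paper: vector-bundle-modify both sides by the trivial bundle $W\times\field{C}^2$ to land on $W\times S^2$ as in the displayed equivalences, then cite (or reconstruct) Raven's explicit $(S^2,\beta)$ pair-of-pants bordism, cross with $W$, and straighten the angle.
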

\begin{proof}
The proof of this proposition is similar to the proof of Proposition 4.3.2 in \cite{Rav}.  A vector bundle modification (by a trival bundle) implies that
\begin{eqnarray*}
(W,\xi_1,f)\dot{\cup}(W,\xi_2,f) & \sim & (W\times S^2,\pi^*(\xi_1)\otimes \beta,f\circ \pi) \dot{\cup} (W\times S^2,\pi^*(\xi_2)\otimes \beta,f\circ \pi) \\
(W,\xi_1\oplus \xi_2,f) & \sim & (W\times S^2,\pi^*(\xi_1 \oplus \xi_2)\otimes \beta,f\circ \pi)
\end{eqnarray*} 
where $\beta$ is the Bott element and $\pi:W\times S^2 \rightarrow W$ is the projection map.  \par
In the proof of Proposition 4.3.2 in \cite{Rav}, an explicit bordism between $(S^2,\beta)$ and $(S^2,\beta)\dot{\cup}(S^2,\beta)$ is constructed; crossing this bordism with $W$ and ``straightening the angle" leads to  
$$(W\times S^2,\pi^*(\xi_1)\otimes \beta,f\circ \pi) \dot{\cup} (W\times S^2,\pi^*(\xi_2)\otimes \beta,f\circ \pi) \sim_{bor} (W\times S^2,\pi^*(\xi_1 \oplus \xi_2)\otimes \beta,f\circ \pi)$$
and hence the desired result.
\end{proof}
\begin{prop}
The set $K_*(X;\phi)$ with the operation of disjoint union is an abelian group.  The unit is given by the trivial (i.e., empty) cycle and the inverse of a cycle is given its opposite.
\end{prop}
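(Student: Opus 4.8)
The plan is to argue in close parallel with the corresponding statement already proved for $\bar{K}_*(X;\phi)$: first establish an abelian semigroup structure in each degree, then identify the identity element, and finally produce inverses using a bordism that has in fact already been constructed in this section. Commutativity and associativity of $\dot{\cup}$ on isomorphism classes of cycles are immediate, since the diffeomorphisms realizing $(W,\xi,f)\dot{\cup}(\tilde{W},\tilde{\xi},\tilde{f})\cong(\tilde{W},\tilde{\xi},\tilde{f})\dot{\cup}(W,\xi,f)$ and the analogous triple for associativity merely permute connected components and are compatible with the $spin^c$-structures, with the classes in $K^0(\,\cdot\,,\partial\,\cdot\,;\phi)$, and with the maps to $X$. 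I would then check that $\dot{\cup}$ descends to the quotient by $\sim$: for a generating bordism relation one takes the disjoint union of the given bordism with the cylinder-type bordism on the unchanged cycle (the manifold $Z$ with $\partial Z$ homeomorphic to $W\times[0,1]$ built in the reflexivity part of the proof of Proposition \ref{borRelMod2IsEqu}), and a generating vector bundle modification is handled in the same way as in the classical Baum-Douglas setting (cf. \cite{Wal}, \cite{Rav}); since $\sim$ respects the splitting of cycles into even and odd ones, this yields well-defined abelian semigroup structures on $K_0(X;\phi)$ and $K_1(X;\phi)$.

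Next I would observe that the empty cycle $\emptyset$ (the cycle with empty underlying manifold) is a legitimate cycle in the sense of Definition \ref{phiCyc}, and that $(W,\xi,f)\dot{\cup}\emptyset=(W,\xi,f)$ literally, so $[\emptyset]$ is a two-sided identity; moreover, any cycle which is a boundary in the sense of Definition \ref{borBou} is, by the very definition of $\sim$, identified with $\emptyset$, so every boundary represents $0$. It then remains only to exhibit inverses. Given a cycle $(W,\xi,f)$, I would invoke the construction carried out in the reflexivity part of the proof of Proposition \ref{borRelMod2IsEqu}: it produces a smooth compact $spin^c$-manifold $Z$ whose boundary is homeomorphic via $h$ to $W\times[0,1]$, inside which $W\dot{\cup}(-W)$ sits as a regular domain of $\partial Z$, together with a class $h^*(\pi_W^*(\xi))\in K^0(Z,\partial Z-{\rm int}(W\dot{\cup}-W);\phi)$, such that $(Z,\,W\dot{\cup}-W,\,h^*(\pi_W^*(\xi)),\,f\circ\pi_W\circ h)$ is a bordism whose boundary in the sense of Remark \ref{phiBor} is precisely $(W,\xi,f)\dot{\cup}-(W,\xi,f)$. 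Reading this as a bordism between $(W,\xi,f)\dot{\cup}-(W,\xi,f)$ and $\emptyset$ gives $(W,\xi,f)\dot{\cup}-(W,\xi,f)\sim_{bor}\emptyset$, hence $[(W,\xi,f)]+[-(W,\xi,f)]=0$ in $K_*(X;\phi)$; together with commutativity this shows $-(W,\xi,f)$ is a two-sided inverse, so $K_*(X;\phi)$ is an abelian group.

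There is essentially no deep obstacle here --- the assertion is the expected analogue of the semigroup and group statements already established for the Baum-Douglas-type models and for $\bar{K}_*(X;\phi)$. The only points that genuinely require care are the well-definedness of $\dot{\cup}$ on $\sim$-classes (handled by the cylinder and trivial-modification arguments indicated above) and a careful unwinding of Definitions \ref{phiCyc} and \ref{borBou} together with Remark \ref{phiBor}, to confirm that the manifold coming from the reflexivity argument really is a bordism with the claimed boundary cycle and that $h^*(\pi_W^*(\xi))$ lands in the correct relative $K$-theory group. Both are routine once the conventions are spelled out, so I expect the proof to be short.
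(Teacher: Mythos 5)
Your proof is correct and takes essentially the same approach as the paper: the paper simply notes that disjoint union gives an abelian semigroup and then appeals directly to the reflexivity argument in the proof of Proposition \ref{borRelMod2IsEqu} (the cylinder-type bordism) to conclude that the empty cycle is the unit and the opposite cycle is the inverse. You have merely unwound that appeal in more detail, including the routine semigroup checks that the paper treats as clear.
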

\begin{proof}
It is clear that disjoint union gives $K_*(X;\phi)$ the structure of an abelian semigroup.  The proof that bordisms is an equivalence relation (i.e., Theorem \ref{borRelMod2IsEqu}) implies both that $K_*(X;\phi)$ is a group and the unit and inverses are given as in the statement of the theorem.
\end{proof}

\subsection{Normal bordisms and six-term exact sequence}
\begin{define}
Let $E$ be a vector bundle. A vector bundle, $F$, is called a complementary bundle for $E$, if $E\oplus F$ is a trivial vector bundle.  If $M$ is a manifold (possibly with boundary), then a complementary bundle for $TM$ will be called a normal bundle. 
\end{define}
Given a manifold with boundary $(W,\partial W)$, we can produce a normal bundle by taking a neat embedding in $H^n:=\{ (v_1,\ldots,v_n) \in \field{R}^n \: | \: v_1 \geq 0 \}$.  
\begin{define}
Let $(W,\xi,f)$ and $(W^{\prime},\xi^{\prime},f^{\prime})$ be two cycles in $K_*(X;\phi)$.  Then, there is a normal bordism from $(W,\xi,f)$ to $(W^{\prime},\xi^{\prime},f^{\prime})$ if there exists normal bundles $N_W$, $N_{W^{\prime}}$ (over $W$ and $W^{\prime}$ respectively) such that 
$$(W,\xi,f)^{N_W} \sim_{bor} (W^{\prime},\xi^{\prime},f^{\prime})^{N_{W^{\prime}}}$$
This relation is denoted by $\sim_{nor}$.
\end{define}
The next lemma is standard (see for example \cite[Lemma 4.5.7]{Rav}), while the one following it is a natural generalization of \cite[Lemma 4.4.3]{Rav}.  The proof of the latter is left to the reader. 
\begin{lemma} \label{norStaIso}
Normal bundles are stably isomorphic (i.e., if $N_1$ and $N_2$ are normal bundles for $W$, then there exists trivial bundles, $E_1$ and $E_2$, such that $N_1 \oplus E_1 \cong N_1 \oplus E_2$).
\end{lemma}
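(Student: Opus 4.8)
The plan is to deduce this directly from the defining property of a normal bundle, with no input beyond the fact that Whitney sum of vector bundles is commutative and associative up to isomorphism. Recall that a normal bundle $N$ for $W$ is by definition a vector bundle with $N\oplus TW$ trivial. So suppose $N_1$ and $N_2$ are normal bundles for $W$, and fix isomorphisms $N_1\oplus TW\cong W\times\field{R}^{k_1}$ and $N_2\oplus TW\cong W\times\field{R}^{k_2}$ (if $W$ is disconnected the integers $k_i$ are to be read as locally constant functions on $W$, i.e.\ ``trivial bundle'' is interpreted componentwise).

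First I would add $N_2$ to the first isomorphism and $N_1$ to the second, and then compare:
\[
N_1\oplus(W\times\field{R}^{k_2})\;\cong\;N_1\oplus N_2\oplus TW\;\cong\;N_2\oplus(N_1\oplus TW)\;\cong\;N_2\oplus(W\times\field{R}^{k_1}).
\]
Setting $E_1=W\times\field{R}^{k_2}$ and $E_2=W\times\field{R}^{k_1}$ (both trivial bundles over $W$) then gives $N_1\oplus E_1\cong N_2\oplus E_2$, which is the assertion.

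I do not expect any genuine obstacle here: the only care needed is the standard bookkeeping with ranks of trivial bundles (and the componentwise reading when $W$ is not connected), and the existence of \emph{some} normal bundle, which is guaranteed by taking a neat embedding of $(W,\partial W)$ into a half-space $H^n$ as recalled just before the statement. The lemma is recorded only because it is exactly the input required to check that the normal bordism relation $\sim_{nor}$ does not depend on the chosen normal bundles when it is used below.
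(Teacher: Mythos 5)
Your proof is correct and is exactly the standard argument the paper implicitly invokes by citing Raven's thesis (Lemma 4.5.7 there) rather than writing it out: both $N_1\oplus TW$ and $N_2\oplus TW$ are trivial by definition, so forming $N_1\oplus N_2\oplus TW$ and regrouping yields $N_1\oplus E_1\cong N_2\oplus E_2$ with $E_1,E_2$ trivial. You also correctly read through the obvious typo in the statement (which has ``$N_1\oplus E_2$'' where ``$N_2\oplus E_2$'' is clearly meant), and the remark about reading ranks componentwise when $W$ is disconnected is an appropriate bit of care.
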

\begin{lemma} \label{twoVecMod}
If $E_1$ and $E_2$ are $spin^c$-vector bundles with even dimensional fibers over a compact $spin^c$-manifold $W$, then, for any $\xi \in K^0(W,\partial W;\phi)$ and $f:W \rightarrow X$, 
\begin{equation*}
(W,\xi,f)^{E_1 \oplus E_2} \sim_{bor} ((W,\xi,f)^{E_1})^{p^*(E_2)}
\end{equation*}
where $p:S(E_1\oplus {\bf 1}) \rightarrow W$ is the projection map.
\end{lemma}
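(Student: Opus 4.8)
The plan is to construct, generalising the proof of \cite[Lemma 4.4.3]{Rav} (which establishes the analogous statement for Baum--Douglas cycles with vector bundle data), an explicit bordism in the sense of Definition \ref{borBou} whose associated boundary cycle (in the sense of Remark \ref{phiBor}) is $(W,\xi,f)^{E_1\oplus E_2}\ \dot{\cup}\ -\big((W,\xi,f)^{E_1}\big)^{p^*(E_2)}$. Throughout, the only feature of $K^0(-,\partial(-);\phi)$ that is used is that it is a module over ordinary $K$-theory with the Bott classes acting through the operation $\otimes_{\field{C}}$; this is precisely why the vector-bundle-data argument of \cite{Rav} carries over.

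First I would reduce to the case that $E_1$ and $E_2$ have constant even fibre ranks $a$ and $b$, since $K^0(-,\partial(-);\phi)$, vector bundle modification and the bordism relation all decompose over the connected components of $W$. Next I would identify the manifolds involved: by the definition of vector bundle modification the left-hand cycle is supported on $W^{E_1\oplus E_2}=S(E_1\oplus E_2\oplus {\bf 1})$, a sphere bundle over $W$, while the right-hand cycle is supported on $S(p^*(E_2)\oplus {\bf 1})$, the sphere bundle of $p^*(E_2)\oplus {\bf 1}$ over $W^{E_1}=S(E_1\oplus {\bf 1})$. To relate the two I would work over the disk bundle $\rho\colon D(E_1\oplus {\bf 1})\to W$, whose boundary is $S(E_1\oplus {\bf 1})=W^{E_1}$, and form its sphere-bundle modification $V:=S(\rho^*(E_2)\oplus {\bf 1})$, a compact $spin^c$-manifold whose boundary contains $S\big((\rho^*(E_2)\oplus {\bf 1})|_{W^{E_1}}\big)=(W^{E_1})^{p^*(E_2)}$. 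In the interior of $V$ lies a distinguished embedded copy $W_0$ of $W$, namely the image of the zero section of $D(E_1\oplus {\bf 1})$ followed by the ``north pole'' section of $V$; a normal-bundle computation shows that the normal bundle of $W_0$ in $V$ is $E_1\oplus E_2\oplus {\bf 1}$. Hence, deleting the interior of a tubular neighbourhood $N(W_0)\cong D(E_1\oplus E_2\oplus {\bf 1})$ of $W_0$, then adjoining a collar-type piece along the part of the boundary coming from $\partial W$ and ``straightening the angle'' (as in \cite{CFPerMap}) exactly as in \cite{Rav}, produces a compact $spin^c$-manifold $Z$ whose boundary contains both $W^{E_1\oplus E_2}=\partial N(W_0)$ and $(W^{E_1})^{p^*(E_2)}$ as regular domains, with the required $spin^c$-structures (all induced from that of $W$ together with the chosen $spin^c$-structures on $E_1,E_2$ and the standard orientation of ${\bf 1}$), and with reference map $f$ composed with the projection $Z\to W$; on the two distinguished pieces of $\partial Z$ this map restricts to the reference maps of the two cycles.

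It then remains to equip $Z$ with a class in the relevant relative group $K^0(Z,\ \partial Z-{\rm int}(W^{E_1\oplus E_2}\dot{\cup}(W^{E_1})^{p^*(E_2)});\phi)$ that restricts to the $K$-theory data $\pi^*(\xi)\otimes_{\field{C}}\beta_{E_1\oplus E_2}$ of $(W,\xi,f)^{E_1\oplus E_2}$ on one end and to the $K$-theory data of $\big((W,\xi,f)^{E_1}\big)^{p^*(E_2)}$ on the other. I would build such a class by pulling $\xi$ back to $Z$ and tensoring (via $\otimes_{\field{C}}$) with a suitable combination of the restriction to $Z$ of the Bott element $\beta_{\rho^*(E_2)}\in K^0(V)$ and a class in $K^0(Z)$ manufactured out of $\beta_{E_1}$. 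That this class restricts correctly reduces --- after the identification of $p^*(E_1\oplus E_2\oplus {\bf 1})$ over $W^{E_1}$ with ${\bf 1}\oplus v^{\perp}\oplus p^*(E_2)$, where $v$ is the tautological unit section of $p^*(E_1\oplus {\bf 1})$ (the line it spans being canonically trivial) and $v^{\perp}$ its orthogonal complement --- to the multiplicativity of the Bott element, namely that $\beta_{E_1\oplus E_2}$ corresponds to the external product $\beta_{E_1}\boxtimes\beta_{E_2}$. This is exactly the content of \cite[Lemma 4.4.3]{Rav}; since every construction there is natural with respect to the relevant $spin^c$ structure groups, it applies over the base $W$.

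The routine parts of the argument are the reduction to constant fibre rank, the normal-bundle computation for $W_0$, the identification of the two supporting manifolds, and the bookkeeping of reference maps. The main obstacle is the last step: the explicit construction and verification of the $K$-theory data on the bordism --- equivalently, establishing in families the multiplicativity of the Bott element, together with the $spin^c$-structure bookkeeping forced by the ``straightening the angle'' step. It is exactly here that one follows \cite[Lemma 4.4.3]{Rav}, the present situation differing only in that the vector bundle data is replaced by a class in $K^0(-,\partial(-);\phi)$, which the Bott elements act on solely through the $K$-theory module structure.
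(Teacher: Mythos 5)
The paper does not actually supply a proof of Lemma \ref{twoVecMod}; it says only that the lemma ``is a natural generalization of [Rav, Lemma 4.4.3]'' and that ``the proof of the latter is left to the reader.'' Your proposal is exactly the filled-in generalization the paper intends: you reproduce Raven's disk-bundle/tubular-neighbourhood construction and correctly identify that the only property of $K^0(-,\partial(-);\phi)$ needed is its $K^0$-module structure via $\otimes_{\field{C}}$, which is what lets the vector-bundle argument transfer verbatim.

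One small imprecision worth flagging: when $\partial W\ne\emptyset$, the embedded copy $W_0\subset V$ (zero section followed by north-pole section) is \emph{not} contained in the interior of $V$ --- it meets the boundary face of $V$ lying over $\partial W$ --- so ``deleting the interior of a tubular neighbourhood'' must already be carried out in the manifold-with-corners sense. You do address this with the collar-and-straightening step, but it would be cleaner to say up front that $D(E_1\oplus{\bf 1})$ and hence $V$ are manifolds with corners (with one face over $\partial W$), that $W_0$ is a neatly embedded submanifold-with-boundary, and that the tubular neighbourhood is taken compatibly with this corner structure; then the resulting $Z$, after straightening, has the regular domain $W^{E_1\oplus E_2}\,\dot{\cup}\,-(W^{E_1})^{p^*(E_2)}$ in $\partial Z$, with the remaining piece of $\partial Z$ (lying over $\partial W$) carrying $\eta|$ so that $\eta\in K^0(Z,\partial Z-\mathrm{int}(W^{E_1\oplus E_2}\dot{\cup}(W^{E_1})^{p^*(E_2)});\phi)$ as required by Definition \ref{borBou}. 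With that clarification your argument is sound and is the argument the paper has in mind.
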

\begin{prop}
Let $X$ be a finite CW-complex.  Then, the relation of normal bordism of cycles is an equivalence relation.  Moreover, it is equal to the relation constructed in Definition \ref{equCyc}. 
\end{prop}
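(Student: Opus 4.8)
The plan is to establish the two assertions separately: first that $\sim_{nor}$ is an equivalence relation, and then that it coincides with the relation $\sim$ of Definition \ref{equCyc}.

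For the first assertion, reflexivity is immediate (take $N_W=N_{W'}$ and use that $\sim_{bor}$ is reflexive by Proposition \ref{borRelMod2IsEqu}), and symmetry is trivial. The content is transitivity. Suppose $(W_0,\xi_0,f_0)\sim_{nor}(W_1,\xi_1,f_1)$ via normal bundles $N_0,N_1'$ and $(W_1,\xi_1,f_1)\sim_{nor}(W_2,\xi_2,f_2)$ via normal bundles $N_1'',N_2$. The only subtlety is that the two witnesses on $W_1$ need not agree. This is exactly where Lemma \ref{norStaIso} enters: $N_1'$ and $N_1''$ are stably isomorphic, so adding suitable trivial bundles makes them isomorphic; and adding a trivial bundle of even rank is a vector bundle modification (by a trivial bundle), so $(W_1,\xi_1,f_1)^{N_1'\oplus\text{triv}}\sim_{bor}(W_1,\xi_1,f_1)^{N_1''\oplus\text{triv}}$ by reflexivity of $\sim_{bor}$ applied after the same modification — more precisely, vector bundle modification by isomorphic bundles yields isomorphic (hence bordant) cycles. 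Combined with Lemma \ref{twoVecMod}, which lets me iterate vector bundle modifications, I can arrange that both chains pass through a common cycle $(W_1,\xi_1,f_1)^{M}$ for a single normal bundle $M$ on $W_1$, after further modifying $(W_0,\xi_0,f_0)$ and $(W_2,\xi_2,f_2)$ by the corresponding pulled-back bundles (again using Lemma \ref{twoVecMod} to split iterated modifications). Transitivity of $\sim_{bor}$ then closes the argument, at the cost of passing to larger normal bundles on $W_0$ and $W_2$, which is permitted since those are again normal bundles.

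For the second assertion I show the two relations contain each other. That $\sim_{nor}$ is contained in $\sim$ is essentially by definition: a normal bordism is a composite of vector bundle modifications and a bordism, each of which is one of the generating moves for $\sim$. For the reverse inclusion it suffices to check that each generating move for $\sim$ is a normal bordism. If $(W,\xi,f)\sim_{bor}(W',\xi',f')$, take $N_W$ and $N_{W'}$ to be normal bundles obtained by restricting a normal bundle of the bordism $Z$ to the two boundary pieces; then $(W,\xi,f)^{N_W}$ and $(W',\xi',f')^{N_{W'}}$ are bordant (the bordism being a vector bundle modification of $Z$ in the sense compatible with Remark \ref{phiVBM}), so $(W,\xi,f)\sim_{nor}(W',\xi',f')$. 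If $(W',\xi',f')=(W,\xi,f)^E$ is a vector bundle modification, I must produce normal bundles $N_W$, $N_{W'}$ with $(W,\xi,f)^{N_W}\sim_{bor}((W,\xi,f)^E)^{N_{W'}}$; using Lemma \ref{twoVecMod} the right side is (up to bordism) $(W,\xi,f)^{E\oplus N_{W'}'}$ for an appropriate bundle, and since $N_{W'}$ can be chosen so that $E\oplus N_{W'}'$ is a normal bundle for $W$ (add $TW$-complement compatibly, invoking Lemma \ref{norStaIso} to absorb the discrepancy into trivial summands), both sides become modifications of $(W,\xi,f)$ by stably isomorphic normal bundles, hence bordant. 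Since $\sim$ is generated by these moves and $\sim_{nor}$ is transitive by the first part, $\sim$ is contained in $\sim_{nor}$.

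The main obstacle is bookkeeping in the transitivity argument: reconciling two different normal bundles on the common cycle $W_1$ and propagating the fix to $W_0$ and $W_2$ without leaving the class of normal bundles. The tools are already in place — Lemma \ref{norStaIso} for stable uniqueness of normal bundles and Lemma \ref{twoVecMod} for decomposing iterated vector bundle modifications — so the argument is a careful but routine combination of these, exactly parallel to the analogous statement in \cite{Rav}; I would model the write-up on that reference.
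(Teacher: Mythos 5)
Your proposal follows essentially the same route as the paper: transitivity is handled via Lemma \ref{norStaIso} to reconcile the two normal bundles on $W_1$ together with Lemma \ref{twoVecMod} to propagate the resulting trivial summands to $W_0$ and $W_2$, and the inclusion $\sim\subseteq\sim_{nor}$ is checked move-by-move, using a complementary bundle $E^c$ for vector bundle modification and a normal bundle of the bounding manifold for the bordism move. The one small slip is that the restriction $N|_W$ of a normal bundle $N$ of the bordism $Z$ to a boundary piece $W$ is not itself a normal bundle of $W$ (one has $TZ|_W\cong TW\oplus\mathbf{1}$), so you must add a trivial line bundle, as the paper does by modifying $Z$ by $N\oplus\mathbf{1}$ so that the resulting bundle $N|_W\oplus\mathbf{1}$ on $W$ is normal.
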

\begin{proof}
We leave it to the reader to verify that $\sim_{nor}$ is reflexive and symmetric. \par
To show $\sim_{nor}$ is transitive, let $\{(W_i,\xi_i,f_i)\}_{i=0}^{2}$ be cycles.  Moreover, assume that for each $i=0,1,2$, $N_i$ is a normal bundle for $W_i$, and $N^{\prime}_1$ is a normal bundle for $W_1$ such that
\begin{eqnarray*}
(W_0,\xi_0,f_0)^{N_0} \sim_{bor} (W_1,\xi_1,f_1)^{N_1} \\
(W_1,\xi_1,f_1)^{N^{\prime}_1} \sim_{bor} (W_2,\xi_2,f_2)^{N_2}
\end{eqnarray*}
Lemma \ref{norStaIso} implies that there are trivial bundles $\epsilon_1$ and $\epsilon^{\prime}_1$ such that 
$$N_1 \oplus \epsilon_1 \cong N^{\prime}_1 \oplus \epsilon^{\prime}_1$$
Let $\epsilon_0$ and $\epsilon_2$ be trivial bundles (over $W_0$ and $W_2$ respectively) of the same rank as $\epsilon_1$ and $\epsilon^{\prime}_1$ respectively.  Following the notation in Lemma \ref{twoVecMod} for the bundle projections and, using the fact that trivial bundles extend across bordisms, we find that
\begin{eqnarray*}
((W_0,\xi_0,f_0)^{N_0})^{p_0^*(\epsilon_0)} \sim_{bor} ((W_1,\xi_1,f_1)^{N_1})^{p_1^*(\epsilon_1)} \\
((W_1,\xi_1,f_1)^{N^{\prime}_1})^{p^*_{1^\prime}(\epsilon^{\prime}_1)} \sim_{bor} ((W_2,\xi_2,f_2)^{N_2})^{p_2^*(\epsilon_2)}
\end{eqnarray*}
Moreover, Lemma \ref{twoVecMod} and $N_1 \oplus \epsilon_1 \cong N^{\prime}_1 \oplus \epsilon^{\prime}_1$ imply that
\begin{eqnarray*}
(W_0,\xi_0,f_0)^{N_0\oplus \epsilon_0} & \sim_{bor} & (W_1,\xi_1,f_1)^{N_1\oplus \epsilon_1} \\
& \sim_{bor} & (W_1,\xi_1,f_1)^{N^{\prime}_1\oplus \epsilon_1^{\prime}} \\
& \sim_{bor} & (W_2,\xi_2,f_2)^{N_2\oplus \epsilon_2}
\end{eqnarray*}
Transitivity of $\sim_{nor}$ then follows since $N_0 \oplus \epsilon_0$ and $N_2\oplus \epsilon_2$ are normal bundles for $W_0$ and $W_2$ respectively. \par
The proof will be complete upon showing $\sim$ and $\sim_{nor}$ are the same relation.  That $\sim_{nor}$ is a weaker relation than $\sim$ is clear.  On the other hand, we must show that 
\begin{enumerate}
\item If $(W,\xi,f)$ is a cycle and $E$ is a smooth $spin^c$-vector bundle with even-dimensional fibers over $W$, then $(W,\xi,f)^E \sim_{nor} (W,\xi,f)$.
\item Any boundary in the sense of Definition \ref{borBou} also normally bounds.
\end{enumerate}
For Item 1, let $p_E:W^E \rightarrow W$ denote the projection map, $N$ a normal bundle for $W$, and $E^c$ a complement to $E$ (i.e., $E\oplus E^c$ is a trivial bundle).  Then (as the reader can verify) $p_E^*(E^c\oplus N \oplus {\bf 1})$ is a normal bundle for $W^E$.  \par
Moreover, using Lemma \ref{twoVecMod}, we obtain
$$((W,\xi,f)^E)^{p_E^*(E^c\oplus N \oplus {\bf 1})} \sim_{bor} (W,\xi,f)^{E\oplus E^c \oplus N \oplus {\bf 1}}$$
The result (i.e., Item 1 above) follows upon noticing that $E\oplus E^c \oplus N \oplus {\bf 1}$ is a normal bundle for $W$. \par
For Item 2, let $(W,\xi,f)$ be the boundary of a bordism $(Z,W,\nu,F)$.  Let $N$ be a normal bundle for $Z$.  Then 
$$\partial (Z,W,\nu,F)^{N\oplus {\bf 1}} = (W,\xi,f)^{N|_W \oplus {\bf 1}} $$
$N|_W \oplus {\bf 1}$ is a normal bundle for $W$; hence $(W,\xi,f)$ normally bounds.
\end{proof}
\begin{cor} \label{norBorToTrivial}
Let $X$ be a finite CW-complex.  Then, a cycle in $K_*(X;\phi)$ is trivial if and only if it normally bounds.
\end{cor}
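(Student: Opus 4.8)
The plan is to read the corollary directly off the proposition immediately preceding it, which already does the substantive work: it establishes that the relation $\sim_{nor}$ of normal bordism coincides with the relation $\sim$ of Definition \ref{equCyc} that is used to define $K_*(X;\phi)$. Granting this, the statement ``$(W,\xi,f)$ is trivial in $K_*(X;\phi)$'' is by definition the statement ``$(W,\xi,f)\sim\emptyset$'', which by the proposition is the same as ``$(W,\xi,f)\sim_{nor}\emptyset$'', where $\emptyset$ denotes the empty cycle. So the corollary amounts to checking that $\sim_{nor}$-triviality is the same as normally bounding.

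First I would unwind the meaning of $(W,\xi,f)\sim_{nor}\emptyset$. By the definition of normal bordism there must exist normal bundles $N_W$ over $W$ and $N_\emptyset$ over $\emptyset$ with $(W,\xi,f)^{N_W}\sim_{bor}\emptyset^{N_\emptyset}$; since any vector bundle modification of the empty cycle is again empty, this collapses to the existence of a single normal bundle $N_W$ for $W$ with $(W,\xi,f)^{N_W}\sim_{bor}\emptyset$. Next I would observe that, per the definition of the bordism relation, being $\sim_{bor}$-equivalent to the empty cycle is precisely being a boundary in the sense of Definition \ref{borBou} (the ``boundary'' of the bordism is then all of $(W,\xi,f)^{N_W}$). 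Matching this against the terminology used in Item~2 of the previous proof, ``$(W,\xi,f)^{N_W}$ is a boundary for some normal bundle $N_W$'' is exactly what ``$(W,\xi,f)$ normally bounds'' is taken to mean, and the two directions of the corollary then follow at once.

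For the reader's convenience I would also spell out the two implications separately, both of which are immediate. For the ``if'' direction: if $(W,\xi,f)$ normally bounds, pick a normal bundle $N_W$ with $(W,\xi,f)^{N_W}$ a boundary; a boundary is $\sim$-trivial, and $(W,\xi,f)\sim(W,\xi,f)^{N_W}$ by vector bundle modification, so $(W,\xi,f)$ is trivial. For the ``only if'' direction: if $(W,\xi,f)$ is trivial then, since $\sim$ and $\sim_{nor}$ coincide, $(W,\xi,f)\sim_{nor}\emptyset$, and unwinding this as above produces a normal bundle $N_W$ with $(W,\xi,f)^{N_W}$ a boundary, i.e.\ $(W,\xi,f)$ normally bounds.

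I do not expect any real obstacle here: the corollary is a bookkeeping consequence of the equivalence $\sim\ =\ \sim_{nor}$ already proved. The only point deserving a moment's care is the role of the empty cycle --- one should note that $\emptyset$ carries only the (empty) normal bundle and that modifying it yields $\emptyset$ again, so that $\sim_{nor}$-triviality genuinely reduces to ``some vector bundle modification of $(W,\xi,f)$ bounds'' rather than to a comparison against a nonempty manifold.
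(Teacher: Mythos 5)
Your argument is exactly the intended one: the paper offers no separate proof of the corollary, treating it as an immediate consequence of the preceding proposition that $\sim$ and $\sim_{nor}$ coincide, and your unwinding of $\sim_{nor}$-triviality against the empty cycle is precisely the bookkeeping that makes that implicit step explicit. Correct, and in line with the paper's approach.
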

\begin{theorem} \label{bockTypeSeq}
If $X$ is a finite CW-complex, then the following sequence is exact: \\
\begin{center}
$\begin{CD}
K_0(X;B_1) @>\phi_*>> K_0(X;B_2) @>r>> K_0(X;\phi) \\
@AA\delta A @. @VV\delta V \\
K_1(X;\phi) @<r<<  K_1(X;B_2) @<\phi_*<< K_1(X;B_1) 
\end{CD}$
\end{center}
where the maps are defined as follows:
\begin{enumerate}
\item $\phi_* : K_*(X;B_1) \rightarrow K_*(X;B_2)$ takes a $B_1$-cycle $(M,F,f)$ to the $B_2$-cycle $(M,\phi_*(F),f)$. 
\item $r: K_*(X;B_2) \rightarrow K_*(X;\phi)$ takes a cycle $(M,E,f)$ to $(M,(E,\emptyset, \emptyset),f)$.  
\item $\delta : K_*(X;\phi) \rightarrow K_{*+1}(X;B_1)$ takes a cycle $(W,(E,F,\alpha),f)$ to $(\partial W,F,f|_{\partial W})$.  
\end{enumerate}
\end{theorem}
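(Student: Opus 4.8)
My plan is to prove exactness directly and geometrically at each of the six positions, rather than transporting the exact sequence of Theorem \ref{exactSeq111}: the comparison isomorphism between the two geometric models is only constructed afterwards in Section \ref{isoGeoModel1And2}, so it is not yet available. The six positions fall into pairs interchanged by switching the two parities, so effectively there are three arguments: exactness at a $K_*(X;B_2)$-spot ($\ker r=\mathrm{im}\,\phi_*$), at a $K_*(X;\phi)$-spot ($\ker\delta=\mathrm{im}\,r$), and at a $K_*(X;B_1)$-spot ($\ker\phi_*=\mathrm{im}\,\delta$). In each case I would establish $\mathrm{im}\subseteq\ker$ by exhibiting an explicit bordism (Definition \ref{borBou}), and the reverse inclusion by taking a bordism — or, via Corollary \ref{norBorToTrivial}, a normal bordism — which witnesses a class as zero and reading the required preimage off of its data.

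First I would check that the three composites vanish. For $\delta\circ r$ this is immediate, since $r(M,E,f)=(M,(E,\emptyset,\emptyset),f)$ has empty boundary. For $\phi_*\circ\delta$, given $(W,(E_{B_2},F_{B_1},\alpha),f)$ one has $\phi_*\delta(W,(E_{B_2},F_{B_1},\alpha),f)=(\partial W,\phi_*F_{B_1},f|_{\partial W})$, which via $\alpha$ is isomorphic to $(\partial W,E_{B_2}|_{\partial W},f|_{\partial W})=\partial(W,E_{B_2},f)$. For $r\circ\phi_*$, the cycle $r\phi_*(M,F_{B_1},f)=(M,(\phi_*F_{B_1},\emptyset,\emptyset),f)$ bounds the bordism $(M\times[0,1],\,M\times\{0\},\,\eta,\,f\circ\mathrm{pr})$, where $\eta\in K^0(M\times[0,1],M\times\{1\};\phi)$ is represented by the pair consisting of $\mathrm{pr}^*(\phi_*F_{B_1})$ on $M\times[0,1]$ and $F_{B_1}$ on $M\times\{1\}$, matched by the canonical identification $\phi_*F_{B_1}=\phi_*(F_{B_1})$ forced by the pullback condition.

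For the reverse inclusions at the $B_1$- and $B_2$-positions I expect only routine work. At a $K_*(X;B_1)$-spot: if $\phi_*(M,F_{B_1},f)$ is zero then, after a vector bundle modification (transported to $(M,F_{B_1},f)$ since $\phi_*$ commutes with vector bundle modification), it bounds a Baum--Douglas bordism $(V,E_{B_2},h)$ with $E_{B_2}|_{\partial V}\cong\phi_*F_{B_1}$; the cycle $(V,(E_{B_2},F_{B_1},\cdot),h)$ in $K_*(X;\phi)$ then has $\delta$-image $(M,F_{B_1},f)$. At a $K_*(X;B_2)$-spot: if $r(M,E_{B_2},f)$ is zero then by Corollary \ref{norBorToTrivial} it normally bounds, so after a vector bundle modification it is the boundary of a bordism $(Z,\mathcal{W},\eta,g)$; as $\mathcal{W}\cong M$ is closed, $\partial Z=\mathcal{W}\,\dot{\cup}\,\mathcal{W}''$ and $\eta$ is represented by data $(\hat{E}_{B_2},\hat{F}_{B_1},\beta)$ with $\hat{E}_{B_2}|_{\mathcal{W}}=E_{B_2}$ and $\beta\colon\hat{E}_{B_2}|_{\mathcal{W}''}\cong\phi_*\hat{F}_{B_1}$, so the Baum--Douglas bordism $(Z,\hat{E}_{B_2},g)$ gives $(M,E_{B_2},f)\sim_{\mathrm{bor}}\phi_*(-\mathcal{W}'',\hat{F}_{B_1},g|_{\mathcal{W}''})$. (The usual Grothendieck-group stabilizations are absorbed by adding trivial bundles.)

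The one step I expect to require genuine care is the reverse inclusion at a $K_*(X;\phi)$-spot, i.e. $\ker\delta\subseteq\mathrm{im}\,r$. Given $(W,(E_{B_2},F_{B_1},\alpha),f)$ with $\delta$-image zero, I would first perform a vector bundle modification of the cycle so that $\delta(W,(E_{B_2},F_{B_1},\alpha),f)$ becomes an honest boundary, say $(\partial W,F_{B_1},f|_{\partial W})=\partial(V,G_{B_1},h)$ with $G_{B_1}|_{\partial V}=F_{B_1}$; arranging this — passing from ``normally bounds'' to ``bounds'' via a modification of $\partial W$ that is the restriction of one of $W$ — is exactly where Lemmas \ref{norStaIso} and \ref{twoVecMod} together with Remark \ref{phiVBM} are needed, and is the fiddliest point. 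Then I would form the closed $spin^c$-manifold $W':=W\cup_{\partial W}V$, the $B_2$-module bundle $E':=E_{B_2}\cup_\alpha\phi_*G_{B_1}$ on $W'$ (well defined since $\alpha\colon E_{B_2}|_{\partial W}\cong\phi_*F_{B_1}=\phi_*(G_{B_1}|_{\partial W})$), and $f':=f\cup h$, giving a $B_2$-cycle with $r(W',E',f')=(W',(E',\emptyset,\emptyset),f')$. Finally, viewing $W$ as a regular domain of $W'$ and setting $Z:=W'\times[0,1]$ with regular domain $\mathcal{W}:=(W\times\{0\})\,\dot{\cup}\,(W'\times\{1\})$ (so that $\partial Z-\mathrm{int}(\mathcal{W})$ is $V\times\{0\}$), the class in $K^0(Z,V\times\{0\};\phi)$ represented by $(\mathrm{pr}^*E',\,G_{B_1})$, together with $g:=f'\circ\mathrm{pr}$, defines a bordism whose boundary is $(W,(E_{B_2},F_{B_1},\alpha),f)\,\dot{\cup}\,-(W',(E',\emptyset,\emptyset),f')$; hence $(W,(E_{B_2},F_{B_1},\alpha),f)\sim r(W',E',f')$. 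Everything outside this step reduces to the constructions of this section and the straightening-the-angle technique already used in Proposition \ref{borRelMod2IsEqu}.
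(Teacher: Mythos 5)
Your proposal is correct and follows essentially the same line as the paper's proof: it checks the composites vanish by explicit bordisms, then for each of the three reverse inclusions passes to a normal bordism (Corollary \ref{norBorToTrivial}, \cite[Corollary 4.5.16]{Rav}) and reads the required preimage off the bordism data, with the $\ker\delta\subseteq\mathrm{im}\,r$ step handled exactly as in the paper by first using Lemmas \ref{norStaIso} and \ref{twoVecMod} to upgrade ``normally bounds'' to ``bounds'' and then gluing to a closed manifold and exhibiting the bordism over $W'\times[0,1]$. The only cosmetic difference is that you phrase $\ker\phi_*\subseteq\mathrm{im}\,\delta$ with bundle data and so get the gluing isomorphism directly from the Baum--Douglas bordism, whereas the paper works with $K$-theory data and invokes Proposition \ref{borModKthPullExtSeq} to lift the pair $(\eta_{B_2},\xi_{B_1})$ to a class in $K^0(W,\partial W;\phi)$.
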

\begin{proof}
In this proof, we refer to cycles in $K_*(X;B_i)$ (respectively, bordisms with respect to $K_*(X;B_i)$) as $B_i$-cycles (respectively, $B_i$-bordisms). That the maps are well-defined is clear in the case of $\phi_*$ and follows from Remarks \ref{phiBor} and \ref{phiVBM} in the case of $r$ and $\delta$ . \par
The bordism relations on the various cycles imply that the composition of successive maps is zero.  The details in the case of $r \circ \phi_*$ are as follows.  Let $(M,\xi,f)$ be a $B_1$-cycle and $\pi_M:M\times [0,1] \rightarrow M$ be the projection onto $M$.  Then $(M\times [0,1],\pi_M^*(\xi_{B_1}),f\circ \pi_M)$ is a $B_1$-bordism between $(M,\xi_{B_1},f)$ and its opposite.  Moreover, this produces a bordism with respect to $K_*(X;\phi)$ between $(M,\phi_*(\xi_{B_1}),f)$ and the empty cycle as follows.  \par
In the notation of Definition \ref{phiBor}, let
\begin{equation*}
Z  =  M\times [0,1], \:
\eta  =  \phi_*(\pi_M^*(\xi_{B_1})), \:
g  =  f\circ \pi_M  
\end{equation*} 
For the regular domain in this bordism, we take $M$.  As such, $\partial Z - {\rm int}(M)$ is $-M$ and
the ``boundary" of this bordisms is 
$$(M,\phi_*(\xi_{B_1}),f)$$
Hence $(M,\phi_*(\xi_{B_1}),f)$ is trivial in $K_*(X;\phi)$.  We leave it to the reader to show that $\phi_* \circ \delta$ and $\delta \circ r$ are both zero. 
\par
We are left to show that
\begin{equation*}
{\rm ker}(\phi_*) \subseteq {\rm im}(\delta),\:  {\rm ker}(\delta)  \subseteq {\rm im}(r),\: {\rm ker}(r) \subseteq  {\rm im}(\phi_*)
\end{equation*}
With the goal of proving ${\rm ker}(\phi_*)  \subseteq  {\rm im}(\delta)$, fix a $B_1$-cycle, $(M,\xi_{B_1},f)$, such that $(M,\phi_*(\xi_{B_1}),f)$ is trivial in $K_*(X;B_2)$.  By \cite[Corollary 4.5.16]{Rav}, there exists a normal bundle, $N$, such that 
$$(M,\phi_*(\xi_{B_1}),f)^N \cong \partial (W,\eta_{B_2},g)$$
where $(W,\eta_{B_2},g)$ is a $B_2$-bordism.  By construction, $(\eta_{B_2},\xi_{B_1})$ is an element of the pullback along the maps $K^0(X;B_2) \rightarrow K^0(Y;B_2)$ and $K^0(Y;B_1)\rightarrow K^0(Y;B_2)$.  Proposition \ref{borModKthPullExtSeq} implies that there exists $\xi \in K^0(W,\partial W;\phi)$ which maps (under the natural maps) to $(\eta_{B_2},\xi_{B_1}) \in K^0(W;B_2) \oplus K^0(\partial W;B_1)$.  In particular, $(W,\xi,g)$ forms a cycle in $K_{*-1}(X;\phi)$.  Moreover,
$$\delta (W,\xi,g) = (M,\phi_*(\xi_{B_1}),f)^N \sim (M,\phi_*(\xi_{B_1}),f)$$
\par
To show that ${\rm ker}(\delta)  \subseteq  {\rm im}(r)$, let $(W,\xi,f)$ be a cycle in $K_*(X;\phi)$ such that $(\partial W, \xi_{B_1},f|_{\partial W})$ is trivial in $K_*(X;B_1)$.  By \cite[Corollary 4.5.16]{Rav}, there exists normal bundle, $N$ over $\partial W$, such that 
$$(\partial W, \xi_{B_1}, f|_{\partial W})^N = \partial (Z,\eta,g)$$
The normal bundle $N$ may not extend to $W$.  However, by Lemma \ref{norStaIso}, if $N^{\prime}$ is a normal bundle for $W$, there exists a trivial bundle $\epsilon$ over $\partial W$ such that $N \oplus \epsilon \cong N^{\prime}|_{\partial W}$.  Hence,
\begin{eqnarray*}
\delta ((W,\xi,f)^{N^{\prime}}) & = & (\partial W, \xi_{B_1},f)^{N^{\prime}|_{\partial W}} \\
& = & (\partial W, \xi_{B_1},f)^{N\oplus \epsilon} \\
& \sim_{bor} & ((\partial W,\xi_{B_1},f)^N)^{p^*(\epsilon)} \\
& = & \partial ((Z,\eta,g)^{\epsilon_Z})
\end{eqnarray*}
where $p:\partial W^N \rightarrow \partial W$ is the projection map and $\epsilon_Z$ is the trivial vector bundle over $Z$ with fiber dimension the same as $\epsilon$.  Summarizing, from a cycle, $(W,\xi,f)$, which is in the kernel of $\delta$, we have produced an equivalent cycle (i.e., $(W,\xi,f)^{N^{\prime}}$) whose image under $\delta$ is a boundary (rather than just trivial). \par
Hence, without loss of generality, we can (and will) assume that $(\partial W, \xi_{B_1}, f|_{\partial W})$ is a boundary in $K_{*-1}(X;B_1)$.  Let $(Z,\eta,g)$ be a $B_1$-bordism such that
$$(\partial W, \xi_{B_1}, f|_{\partial W}) = \partial (Z,\eta,g) $$
Then $(Z,\phi_*(\eta),g)$ is a $B_2$-bordism.  Form the closed (smooth, $spin^c$) manifold $\tilde{W}=W \cup_{\partial W} Z$.  As the reader will note, the $K$-theory data and continuous function are compatible along $\partial W$.  Hence, we can form the $B_2$-cycle, $(\tilde{W},\xi_{B_2}\cup \phi_*(\eta) ,f\cup g)$.  \par
It remains to show that 
$$r(\tilde{W},\xi_{B_2}\cup \phi_*(\eta) ,f\cup g) \sim (W,\xi,f)$$
This result follows from the following bordism in the $K_*(X;\phi)$:
$$(\tilde{W}\times [0,1], \tilde{W}\dot{\cup} W, \tilde{\xi}, (f\cup g)\circ \pi)$$
where $\pi:\tilde{W}\times [0,1]\rightarrow \tilde{W}$ and $\tilde{\xi}$ is the element in $K^0(\tilde{W}\times [0,1],Z;\phi)$ formed from $\pi^*(\xi_{B_2}\cup \phi^*(\eta))\in K^0(\tilde{W}\times [0,1];B_2)$ and $\eta_{B_1}\in K^0(Z;B_1)$.
\par
Finally, we show that ${\rm ker}(r) \subseteq  {\rm im}(\phi_*)$.  Let $(M,\xi_{B_2},f)$ be a $B_2$-cycle, which is mapped to the trivial element in $K_*(X;\phi)$.  By Lemma \ref{norBorToTrivial}, there exists a normal bundle, $N$, such that 
$$(M,\xi_{B_2},f)^N=\partial (Z,M^N,\eta,g)$$  
Consider $(Z,\eta_{B_2},g)$ as a bordism with respect to $K_*(X;B_2)$.  It has boundary 
$$(M,\xi_{B_2},f)^N \dot{\cup} (\partial Z - M , \phi_*(\eta_{B_1}), g|_{\partial Z - M})$$
The bordism relation in $K_*(X;B_2)$ then implies that
$$(M,\xi_{B_2},f)^N \sim \phi_*((\partial Z - M , (\eta_{B_1}), g|_{\partial Z - M}))$$
This completes the proof.
\end{proof}

\section{Isomorphism from $\bar{K}_*(X;\phi)$ to $K_*(X;\phi)$} \label{isoGeoModel1And2}
\begin{define} \label{isoDefnModel1And2}
Let $\mu_{\phi}: \bar{K}_*(X;\phi) \rightarrow K_{*+1}(X;\phi)$ be the map defined at the level of cycles via
$$ (M,[E,F,\varphi],f) \mapsto (M\times [0,1], [(\mathcal{E}_{B_2}, \mathcal{F}_{B_1}, \alpha)],f\circ \pi)$$ 
where 
\begin{enumerate}
\item $\pi:M\times [0,1] \rightarrow M$ is the projection map;
\item $\mathcal{E}_{B_2}$ is the Hilbert $B_2$-module bundle, $\phi_*(\pi^*(E))$;
\item $\mathcal{F}_{B_1}$ is the Hilbert $B_1$-module bundle defined by taking its fiber at $M\times \{0\}$ to be $E$ and its fiber at $M\times \{1\}$ to be $F$ ;
\item $\alpha: \phi_*(\mathcal{F}_{B_1}) \rightarrow \mathcal{E}_{B_2}|_{M\times \{0\} \dot{\cup} M\times \{1\}}$ given by the identity on $M\times \{0\}$ and $\varphi$ on $M\times \{1\}$;
\end{enumerate}
\end{define}
Since the definition of $\mu_{\phi}$ involves the choice of cocycle, $(E,F,\varphi)$ (rather than just the class $[E,F,\varphi]$), our first goal is to show that $\mu_{\phi}$ is well-defined at the level of cycles. We must show that 
\begin{enumerate} 
\item If $(E,F,\varphi) \cong (E^{\prime},F^{\prime},\varphi)$, then $\mu_{\phi}(M,[E,F,\varphi],f)=\mu_{\phi}(M,[E^{\prime},F^{\prime},\varphi^{\prime}],f)$.
\item If $(E,F,\varphi)$ is an elementary cocycle, then $\mu_{\phi}(M,[E,F,\varphi],f)$ is trivial. 
\end{enumerate}
The first item follows from the bordism relation in $K_*(X;\phi)$.  Let $(\beta_1,\beta_2)$ be an isomorphism from $(E,F,\varphi)$ to $(E^{\prime},F^{\prime},\varphi)$; the definition of isomorphism in this context is given in Section \ref{KthKarRel}.  Also, let $\field{D}$ denote the closed unit disk and 
$$I_1=\left[0,\frac{\pi}{2}\right],\  I_2= \left[\frac{\pi}{2}, \pi \right], \ I_3= \left[\pi, \frac{3\pi}{2}\right],\ I_4=\left[\frac{3\pi}{2}, 0\right]$$
be intervals inside $\partial \field{D}=S^1$. Given an interval $I$, let $\pi_I:M\times I \rightarrow M$ denote the projection map. Consider the following bordism (with respect to $K_*(X;\phi)$):
$$\left(M\times \field{D}, M\times \left[\frac{\pi}{2},\frac{3\pi}{4}\right] \: \dot{\cup} \: M\times \left[\frac{5\pi}{4}, \frac{3\pi}{2}\right] ,(\hat{\mathcal{E}}_{B_2},\hat{\mathcal{F}}_{B_1},\hat{\alpha} ),f\circ \pi \right) $$
where
\begin{enumerate}
\item $\hat{\mathcal{E}}_{B_2}$ is formed on $M\times S^1$ by respectively clutching bundles $\pi_{I_1}^*(F)\otimes_{\phi}B_2$, $\pi_{I_2}^*(E)\otimes_{\phi}B_2$, $\pi_{I_3}^*(E^{\prime})\otimes_{\phi}B_2$, and $\pi_{I_4}^*(F^{\prime})\otimes_{\phi}B_2$ along $M \times \{\frac{\pi}{2}\}$, $M\times \{\pi\}$, $M\times \{\frac{3\pi}{2}\}$, and $M\times \{0\}$ via the isomorphisms $\varphi$, $\phi^*(\beta_1)$, $\varphi^{\prime}$, and $\phi^*(\beta_2)$.  The reader should note that this bundle extends to all of $M\times \field{D}$ since it is formed by taking the homotopy associated to the isomorphism $\phi_*(\beta_1)$ and then straightening the angle;
\item $\hat{\mathcal{F}}_{B_1}$ is given by $\pi_{I_1}^*(F) \cup_{\beta_2} \pi_{I_4}^{*}(F^{\prime})$ on $M\times [\frac{3\pi}{2},\frac{\pi}{2}]$ and $\pi_{J_1}^*(E) \cup_{\beta_1} \pi_{J_2}^*(E^{\prime})$ on $M\times [\frac{3\pi}{4},\frac{5\pi}{4}]$ where $J_1=[\frac{3\pi}{4}, \pi]$ and $J_2=[\pi,\frac{5\pi}{4}]$; 
\item $\hat{\alpha}$ is defined by $\varphi$ on fibers at $M\times \{\frac{\pi}{2}\}$, $\varphi^{\prime}$ on fibers at $M\times \{\frac{3\pi}{2}\}$ and the identity at fibers $M\times \{\frac{3\pi}{4} \}$ and $M\times \{\frac{5\pi}{4} \}$.
\end{enumerate}
As the reader can verify, this bordism (with respect to $K_*(X;\phi)$) has the required boundary.  Such a reader may wish to begin by checking the result in the case of $M=pt$.  
\par
The second item will also follow from the bordism relation (with respect to $K_*(X;\phi)$).  We form the required bordism by taking the regular domain $M\times [0,1]$ inside $\partial (M \times \field{D})=M\times S^1$.  Then
$$((M\times \field{D},M\times [0,1]), (\phi_*(\pi^*(E)),\pi^*_{[0,1]}(E),\alpha),f\circ \pi)$$ 
forms the required bordism where the construction of the bundle data is similar to the previous argument.
\par
Next, the relations must be considered.  For vector bundle modification, let $(M,[E,F,\varphi],f)$ be a $\bar{K}$-cycle (that is, a cycle as in Definition \ref{relKthCycMod}) and $V$ be an even rank $spin^c$-vector bundle over $M$. Also, let $\pi_V : V \rightarrow M$ and $\pi: M\times [0,1] \rightarrow M$ deonte the relevant projection maps. Then
\begin{eqnarray*}
\mu_{\phi}((M,[E,F,\varphi],f)^V) & = & \mu_{\phi}(M^V,[\pi_V^*(E), \pi_V^*(F),\pi_V^*(\varphi)]\otimes \beta ,f\circ \pi_V)  \\
& = & (M^V\times [0,1],[(\mathcal{\pi_V^*(E)}_{B_2},\mathcal{\pi_V^*(F)}_{B_1},\alpha_{\pi_V^*(\varphi)})],f\circ \pi_V)\\
& = & (M\times [0,1], [(\mathcal{E}_{B_2}, \mathcal{F}_{B_1}, \alpha)],f\circ \pi)^{\pi^*(V)} \\
& = & \mu_{\phi}(M,[E,F,\varphi],f)^{\pi^*(V)} \\
& \sim & \mu_{\phi}(M,[E,F,\varphi],f)
\end{eqnarray*}
For the bordism relation (with respect to $\bar{K}_*(X;\phi)$), suppose that the $\bar{K}$-cycle, $(M,[E,F,\varphi],f)$, is the boundary of $(\tilde{M},[\tilde{E},\tilde{F},\tilde{\varphi}],\tilde{f})$.  Using the straightening the angle technique, we have a smooth $spin^c$-manifold, $\tilde{Z}$, which is homeomorphic to $\tilde{M}\times [0,1]$; let $h$ denote such a homeomorphism.  By construction, $\partial \tilde{Z}$ contains $M\times [0,1]$ as a regular domain and 
\begin{equation} \label{twoCopiesBor}
\partial \tilde{Z} - (M\times (0,1)) = \tilde{M}\dot{\cup} -\tilde{M}
\end{equation}
We form a bordism (with respect to $K_*(X;\phi)$) by taking 
$$((\tilde{Z}, M\times[0,1]),(\mathcal{E}_{B_2},\mathcal{F}_{B_1},\alpha), g)$$
where 
\begin{enumerate}
\item $\pi:\tilde{M}\times [0,1] \rightarrow \tilde{M}$ is the projection map;
\item $\mathcal{E}_{B_2}=h^*(\pi^*(\tilde{E}))$;
\item $\mathcal{F}_{B_1}$ is given by $\tilde{E}$ on the first copy of $\tilde{M}$ and $\tilde{F}$ on the second (see Equation \ref{twoCopiesBor}); 
\item $\alpha$ is given by the identity on the fibers of the first copy of $\tilde{M}$ and $\varphi$ on the second;
\item $g=\tilde{f}\circ \pi \circ h$.
\end{enumerate}       
Thus, $\mu_{\phi}$ is well-defined as a map from $\bar{K}_*(X;\phi) \rightarrow K_{*+1}(X;\phi)$.  
\begin{theorem}
If $X$ is a finite CW-complex, then $\mu_{\phi}$ is an isomorphism.
\end{theorem}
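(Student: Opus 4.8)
The plan is to compare the two six-term exact sequences already in hand --- that of Theorem~\ref{exactSeq111}, running through $\bar{K}_*(X;\phi)$, and that of Theorem~\ref{bockTypeSeq}, running through $K_*(X;\phi)$ --- and then to invoke the five lemma. Both sequences are $6$-periodic, and away from the two ``$\phi$-slots'' they consist of the same groups $K_*(X;B_1),K_*(X;B_2)$ joined by the same cycle-level map $\phi_*\colon(M,F,f)\mapsto(M,\phi_*(F),f)$. I would therefore assemble the commutative ladder whose two rows are these sequences, whose vertical map at every $K_*(X;B_i)$-slot is the identity, and whose vertical maps at the $\phi$-slots are the two components $\mu_{\phi}\colon\bar{K}_1(X;\phi)\to K_0(X;\phi)$ and $\mu_{\phi}\colon\bar{K}_0(X;\phi)\to K_1(X;\phi)$ (the degree shift in $\mu_{\phi}\colon\bar{K}_*(X;\phi)\to K_{*+1}(X;\phi)$ is exactly what makes these line up). Here $\mu_{\phi}$ is a group homomorphism: it is well defined by the discussion preceding this theorem, and it is additive since it is applied to each connected component of a cycle separately and sends the empty cycle to the empty cycle. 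Once the ladder is known to commute, applying the five lemma at each $\phi$-slot --- whose four neighbours all carry the identity vertical map --- shows that $\mu_{\phi}$ is an isomorphism, which is the assertion.

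Thus the whole proof reduces to commutativity of the squares of the ladder. The squares built only from $\phi_*$ and identity maps commute trivially, since $\phi_*$ is literally the same map in both theories. For the square relating $\bar{\delta}$ and $\delta$,
\begin{center}
$\begin{CD}
\bar{K}_*(X;\phi) @>\bar{\delta}>> K_*(X;B_1) \\
@VV\mu_{\phi}V @| \\
K_{*+1}(X;\phi) @>\delta>> K_*(X;B_1)
\end{CD}$
\end{center}
one reads off from the definitions that $\mu_{\phi}(M,[E,F,\varphi],f)$ is supported on $M\times[0,1]$, with $B_1$-module bundle over the boundary $M\times\{0\}\,\dot{\cup}\,M\times\{1\}$ equal to $E$ on one end and $F$ on the other; hence $\delta$ of it is $(M,E,f)\,\dot{\cup}\,(-M,F,f)$, which represents $[E]-[F]=\bar{\delta}(M,[E,F,\varphi],f)$ in $K_*(X;B_1)$. (The orientation convention on $\partial(M\times[0,1])$ is chosen so that this holds on the nose; even if one does not, at worst an overall sign appears, which is irrelevant to the five-lemma conclusion.)

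The remaining square,
\begin{center}
$\begin{CD}
K_*(X;B_2) @>\bar{r}>> \bar{K}_{*+1}(X;\phi) \\
@| @VV\mu_{\phi}V \\
K_*(X;B_2) @>r>> K_*(X;\phi)
\end{CD}$
\end{center}
is the real obstacle, since the two composites are presented on genuinely different manifolds: $r(M,E,f)$ is the closed cycle $(M,(E,\emptyset,\emptyset),f)$, whereas $\bar{r}(M,E,f)=(M\times S^1,\hat{r}(u),f\circ\pi)$ --- with $u$ the ``multiplication by $z$'' automorphism of $\pi^*E$ and $\hat{r}$ the map of Proposition~\ref{sixTermExactForRelKth} --- so that $\mu_{\phi}(\bar{r}(M,E,f))$ is supported on $M\times S^1\times[0,1]$, its bundle data built from the clutching data of $u$ via $\hat{r}$. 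I would establish $\mu_{\phi}\circ\bar{r}=r$ by an explicit bordism in $K_*(X;\phi)$, in the spirit of the well-definedness computations of this section and of the verification $\iota_*\circ\mu_{B_2}=\alpha\circ\bar{r}$ in the proof of Theorem~\ref{exactSeq111}. The key point is that $\hat{r}$ produces a trivial $B_1$-module bundle on the factor carrying $u$; consequently, on the end of the cylinder $M\times S^1\times[0,1]$ where the gluing isomorphism is the identity, the bundle data extends over a disk bundle $M\times\field{D}$. Capping off there and ``straightening the angle'' exhibits $\mu_{\phi}(\bar{r}(M,E,f))$ as bordant to a cycle on $M\times\field{D}$, and one further elementary bordism collapses the $\field{D}$-direction and leaves the closed cycle $(M,(E,\emptyset,\emptyset),f)=r(M,E,f)$. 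It is convenient to dispatch the case $M=\mathrm{pt}$ first, where the statement is a direct computation with $K^0(\mathrm{pt};\phi)$ and the boundary maps of the mapping cone, and then obtain the general case by crossing with $M$. Matching the $\hat{r}$-description of $\bar{r}$ with the empty-boundary description of $r$, together with the straightening-the-angle bookkeeping, is where the genuine work lies; the remainder of the argument is purely formal.
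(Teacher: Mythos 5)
Your overall strategy is exactly the paper's: assemble the commutative ladder between the six-term sequences of Theorem~\ref{exactSeq111} and Theorem~\ref{bockTypeSeq} with identities at the $K_*(X;B_i)$-slots and $\mu_\phi$ at the $\phi$-slots, and apply the Five Lemma. You also correctly flag $\mu_\phi\circ\bar r = r$ as the only nontrivial commutativity to check (the $\phi_*$-square is trivial, and your verification of the $\bar\delta/\delta$-square via $\delta(M\times[0,1],\ldots) = (M,E,f)\,\dot\cup\,-(M,F,f)$ matches the paper's implicit treatment).

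Where the proposal is thin is precisely at the hard square, and the sketch you give does not match the geometry one actually needs. After capping the identity end of $M\times S^1\times[0,1]$ you obtain a cycle on $M\times\field{D}$ whose boundary $M\times S^1$ still carries the nontrivial gluing $\hat u$; there is no single ``elementary bordism'' that collapses the $\field{D}$-direction and lands on the closed cycle $(M,(E,\emptyset,\emptyset),f)$. The information of $E$ is encoded in the winding of $\hat u\sim u$, so extracting $(M,E,f)$ requires a Bott-periodicity step, not just a collapse. The paper instead glues the two ends of the cylinder to one another along $\hat u$, producing $r$ applied to a \emph{closed} $B_2$-cycle on $M\times S^1\times S^1$ with clutched bundle $V_{\hat u}$; it then uses (i) the homotopy $\hat u\sim u$, (ii) an explicit bordism from $M\times S^1\times S^1$ with $V_u$ to $M\times S^2$ with $(\pi')^*(E)\otimes F_{\mathrm{Bott}}$, and (iii) vector bundle modification to conclude $(M\times S^2, (\pi')^*(E)\otimes F_{\mathrm{Bott}}, f\circ\pi')\sim(M,E,f)$. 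Your sketch omits the $T^2\!\to\!S^2$ bordism and the VBM move entirely, and these are exactly where the content lies; a reader cannot reconstruct the proof from what you have written. You also elide the preliminary point (via \cite[Section~3.21]{Kar}) that $\hat r(u)$ must first be represented by an honest cocycle $(\pi^*(F_{B_1}),\pi^*(F_{B_1}),\hat u)$ with $\hat u$ a homotope of $u$ before $\mu_\phi$ can be applied to it cycle-by-cycle. Everything else in the proposal is sound.
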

\begin{proof}
The Five Lemma and Theorems \ref{exactSeq111} and \ref{bockTypeSeq} reduce the proof to showing the commutativity of the diagram: 
\begin{center}
$\minCDarrowwidth12pt\begin{CD}
@>>> K_1(X;B_2) @>\bar{r}>> \bar{K}_0(X;\phi) @>\bar{\delta}>> K_0(X;B_1) @>\phi_*>> K_0(X;B_2) @>\bar{r}>> \bar{K}_1(X;\phi) @>>>  \\
@. @| @V\mu_{\phi}VV @| @| @V\mu_{\phi}VV  @. \\
 @>>> K_1(X;B_2) @>r>> K_1(X;\phi) @>\delta>> K_0(X;B_1) @>\phi_*>> K_0(X;B_2) @>r>> K_0(X;\phi) @>>> 
\end{CD}$
\end{center}
\vspace{0.2cm}
where the maps are defined in Definition \ref{isoDefnModel1And2} and Theorems \ref{exactSeq111} and \ref{bockTypeSeq}. \par
The only nontrivial part is proving that $\mu_{\phi}\circ \bar{r} = r$.  Let $(M,E_{B_2},f)$ be a cycle in $K_*(X;B_2)$.  There are two steps in our proof.  Firstly, we show that $(\mu_{\phi}\circ \bar{r})(M,E_{B_2},f)$ is in the image of $r$ and then show it is equal to $r(M,E_{B_2},f)$.  For the first step, by the definition of $\bar{r}$ and \cite[Section 3.21]{Kar}, there exists $B_1$-module bundle, $F_{B_1}$ and unitary $\hat{u}: \pi^*(F_{B_1})\otimes_{\phi}B_2 \rightarrow \pi^*(F_{B_1})\otimes_{\phi}B_2$ such that $\hat{u}\sim u$ in $K^1(M\times S^1;B_2)$ and
\begin{eqnarray*}
\bar{r}(M,E_{B_2},f) & = & (M\times S^1, \hat{r}(u), f\circ \pi) \\
& \sim & (M\times S^1, (\pi^*(F_{B_1}),\pi^*(F_{B_1}),\hat{u}),f\circ \pi)
\end{eqnarray*}
where $\pi:M\times S^1\rightarrow M$ is the projection map. By the definition of $\mu_{\phi}$, 
\begin{equation*}
(\mu_{\phi}\circ \bar{r})(M,E_{B_2},f) =  (M\times S^1\times [0,1], (\tilde{\pi}^*(F_{B_1}\otimes_{\phi} B_2, \pi^*(F_{B_1})\dot{\cup} \pi^*(F_{B_1}),\alpha_{\hat{u}}),f \circ \tilde{\pi}) 
\end{equation*}
where $\alpha_{\hat{u}}$ is defined as in Definition \ref{isoDefnModel1And2}. Moreover, the bordism relation with respect to $K_*(X;\phi)$ implies that 
\begin{eqnarray*}
(M\times S^1\times [0,1], (\tilde{\pi}^*(F_{B_1}\otimes_{\phi} B_2), \pi^*(F_{B_1}),\alpha_{\hat{u}}),f \circ \tilde{\pi}) & \sim & (M\times S^1 \times S^1, (V_{\hat{u}},\emptyset, \emptyset),f\circ \hat{\pi}) \\
& = & r(M\times S^1 \times S^1, V_{\hat{u}},f\circ \hat{\pi})
\end{eqnarray*}
where 
\begin{enumerate}
\item $\tilde{\pi}:M\times S^1\times [0,1] \rightarrow M$ is the projection map;
\item $\hat{\pi}:M\times S^1\times S^1\rightarrow M$ is the projection map;
\item $V_{\hat{u}}$ is the $B_2$-module bundle obtained by clutching the vector bundle $\pi^*(F_{B_1})\otimes_{\phi}B_2$ along $M\times S^1$ via $\hat{u}$.  
\end{enumerate}
This completes the first step of the proof. To summarize, we have shown that 
$$(\mu_{\phi}\circ \bar{r})(M,E_{B_2},f)=r(M\times S^1 \times S^1, V_{\hat{u}},f\circ \hat{\pi})$$ 
\par
The second step is to show that $(M\times S^1 \times S^1, V_{\hat{u}},f\circ \hat{\pi}) \sim (M,E_{B_2},f)$.  Using the fact that $u$ and $\hat{u}$ are equivalent in $K^1(M\times S^1;B_2)$, the bordism relation, and vector bundle modification, we obtain  
\begin{eqnarray*}
(M\times S^1 \times S^1, V_{\hat{u}},f\circ \hat{\pi}) & \sim & (M\times S^1 \times S^1, V_u,f\circ \hat{\pi}) \\
& \sim_{bor} & (M\times S^2, (\pi^{\prime})^*(E_{B_2})\otimes F_{{\rm Bott}}, f\circ \pi^{\prime}) \\
& \sim_{VBM} & (M,E_{B_2},f) 
\end{eqnarray*}
where $\pi^{\prime}:M\times S^2 \rightarrow M$ is the projection map, $F_{{\rm Bott}}$ denotes the Bott bundle, and $V_u$ is defined in the same way as $V_{\hat{u}}$ was defined. We note that the explicit bordism used above is obtained from a bordism between $S^2$ and $S^1 \times S^1$ (each with an appropriate vector bundle). 
\end{proof}

\begin{ex} {\bf $K$-homology with coefficients} \label{coeExa} \vspace{0.1cm} \\
In this example, we discuss $K$-homology with coefficients in certain abelian groups. An introduction to $K$-theory and $K$-homology with coefficients in the abelian groups of interest here can be found in \cite[Section 23.15]{Bla} (also see \cite{AAS, APS2, APS3}); geometric K-homology with coefficients in $\zkz$ is the topic of \cite{Dee1, Dee2}. Given an abelian group, $G$, and a finite CW-complex, $X$, we denote the $K$-theory of $X$ with coefficients in $G$ by $K^*(X;G)$ and the $K$-homology of $X$ with coefficients in $G$ by $K_*(X;G)$. 
\par
The fundamental property of $K$-homology (or any generalized homology theory) with coefficients is the Bockstein sequence (see \cite{Dee1} in the case of $\zkz$). Suppose that 
$$0 \rightarrow G_1 \rightarrow G_2 \rightarrow G_3 \rightarrow 0$$ 
is a short exact sequence of abelian groups.  Then, the Bockstein sequence associated to this short exact sequence is the six-term exact sequence of $K$-homology with coefficients: 
\begin{center}
$\begin{CD}
K_0(X;G_1) @>\phi_*>> K_0(X;G_2) @>r_{G_3}>> K_0(X;G_3) \\
@AA\delta_{G_3} A @. @VV\delta_{G_3} V \\
K_1(X;G_3) @<r_{G_3}<<  K_1(X;G_2) @<\phi_*<< K_1(X;G_1) 
\end{CD}$
\end{center}
\vspace{0.2cm}
The mapping cone of a $*$-homomorphism, $\phi$ and the six-term exact sequence in $KK$-theory can be used to construct such sequences.  Prototypical examples of $\phi$ are given by the following unital inclusions 
\begin{equation*}
\field{C}  \rightarrow  M_n(\field{C}), \ 
\field{C}  \rightarrow  Q, \
\field{C}  \rightarrow  N
\end{equation*}
where $Q$ is a UHF-algebra with $K_0$-group, $\field{Q}$, and $N$ is a ${\rm II_1}$-factor (recall that $K_0(N)\cong \field{R}$ and $K_1(N)\cong \{0\}$).  In these cases, Theorems \ref{exactSeq111} and \ref{bockTypeSeq} produce the Bockstein sequence associated (respectively) to the following exact sequences of abelian groups: 
\begin{eqnarray*}
0 \rightarrow \field{Z} \rightarrow \field{Z}  \rightarrow & \zkz & \rightarrow 0 \\
0 \rightarrow \field{Z} \rightarrow \field{Q}  \rightarrow & \field{Q}/\field{Z} & \rightarrow 0 \\
0 \rightarrow \field{Z} \rightarrow \field{R}  \rightarrow & \field{R}/\field{Z} & \rightarrow 0 
\end{eqnarray*}
In other words, Theorems \ref{exactSeq111} and \ref{bockTypeSeq} produce geometric models (via the cycles in Definitions \ref{relKthCycMod} and \ref{phiCyc} respectively) for $K_*(X;\zkz)$, $K_*(X;\field{Q}/\field{Z})$, and $K_*(X;\field{R}/\field{Z})$.
\end{ex}
In the next section, we discuss geometric $K$-homology with coefficient in $\rz$ in detail.  Before doing so, we discuss a number of generalizations of the constructions considered to this point.
\par
The Baum-Douglas model has been generalized to the equivariant and families index settings (see \cite{BHSEquPaper, BOOSW, EM, Wal}).  In a number of cases, the models constructed in this paper also have such generalizations.  In the equivariant setting, in the case of compact Lie groups, one should replace our cycles with the natural analogue based on \cite{Wal}.  In the case of a discrete group which acts properly, one should replace our cycles with the natural analogue of the cycles in \cite{BHSEquPaper}.  A generalization to actions of groupoids also seems possible, but much more involved.  The interested reader should compare the cycles defined in \cite{BHSEquPaper} with those in \cite{EM} as a starting point.  
\par
Let $D$ denote a unital $C^*$-algebra.  Then a model for $KK(C(X),D\otimes C_{\phi})$ can be obtained directly from our results.  One simply notes that the main results in this paper can be applied to the $*$-homomorphism $id_D \otimes \phi:D\otimes B_1\rightarrow D\otimes B_2$.  If $D$ is a commutative $C^*$-algebra (i.e., $D=C(Y)$), then there is an alternative approach; the cycles in this theory are defined as follows.  
\begin{define} \label{YCycles}
A cycle (over $X$ with respect to $\phi$ and $Y$) is given by a triple, $(W, (E_{B_2},F_{B_1},\alpha),f)$, where 
\begin{enumerate}
\item $W$ is a smooth, compact $spin^c$-manifold with boundary;
\item $E_{B_2}$ is a finitely generated projective Hilbert $B_2$-module bundle over $W\times Y$;
\item $F_{B_1}$ is a finitely generated projective Hilbert $B_1$-module bundle over $\partial W\times Y$;
\item $\alpha : E_{B_2}|_{Y\times \partial W} \cong \phi_*(F_{B_1})$ is an isomorphism of Hilbert $B_2$-module bundles;
\item $f:W \rightarrow X$ is a continuous map.
\end{enumerate}
\end{define}
The relation on such cycles is the natural generalization of the relation discussed in Section \ref{BorModel}.  One can replace the bundle data in such a cycle with a class in $K^0(Y\times W,Y\times \partial W;\phi)$. We will make use of this model in the next section in the particular case of $K$-homology with coefficients in $\rz$. 

\section{$\rz$-valued index theory} \label{rzIndexSec}
In this section, we specialize to the case of $\phi$ is the unital inclusion of the complex number into a ${\rm II}_1$-factor. As discussed in Example \ref{coeExa}, for this choice of $\phi$, $K_*(X;\phi)$ is a realization of $K_*(X;\rz)$.  The reader should compare our construction in Section \ref{rzIndMap} to \cite[Section 5]{APS3} and the pairings in Section \ref{rzIndPai} to the development in \cite[Section 6]{HigRoeEta}. 
\par
As the reader may recall if $X$ is a finite CW-complex and $G$ is an abelian group, then $K^*(X;G)$ (respectively, $K_*(X;G)$) denotes the $K$-theory (respectively, $K$-homology) of $X$ with coefficients in $G$. The reader may find it useful to refer back to the following list of notation regarding the specific models of $K_*(X;G)$ and $K^*(X;G)$ when reading this section:
\begin{enumerate}
\item $K_*(X;\rz)$ denotes the realization of $K$-homology with coefficients in $\rz$ via (depending on context) cycles as in Definition \ref{cycBunDat} or Definition \ref{phiCyc}; the reader should note that the only difference between these cycles is the use of bundle data in Definition \ref{cycBunDat} and $K$-theory data in Definition \ref{phiCyc};
\item $K^*(X;\rz)$ denotes the realization of $K$-theory with coefficients in $\rz$ via cycles as in Definition \ref{YCycles};
\item $K^*(X;\field{R})$ (respectively, $K^*(X;\field{Q})$) denotes the realization of $K$-theory with real (respectively, rational) coefficients given by $K_*(pt;C(X)\otimes A)$ where $A$ is a ${\rm II}_1$-factor (respectively, a UHF-algebra with $K_0$ the rational numbers); 
\item $K^*(X;\zkz)$ denotes the realization of $K$-theory with coefficients in $\zkz$ via cycles as in \cite{Dee1} (i.e., using $\zkz$-manifold theory);
\item $K^*(X;\qz):=\lim  K^*(X;\zkz)$;
\item $K^*_{APS}(X;\rz):= {\rm coker}(q)$ where $q:K^*(X;\field{Q}) \rightarrow K^*(X;\qz)\oplus K^*(X;\field{R})$ is the natural map (for more see \cite{APS3} or Section \ref{rzIndMap} below);
\item $K^*_{Basu}(X;\rz)$ is the realization of $K$-theory with coefficients in $\rz$ constructed in \cite{Bas} (we use this model only in Sections 6.2 and 6.3);
\item $K^*_{Lott}(X;\rz)$ is the realization of $K$-theory with coefficients in $\rz$ constructed in \cite{Lot} (we use this model only in Section 6.3);
\end{enumerate}
\subsection{The index map and $K$-theory with $\rz$-coefficients} \label{rzIndMap}
Let $Y$ denote a compact Hausdorff space; a useful special case to consider is $Y=pt$. Following \cite[Section 5]{APS3}, we begin with the observation that $K^*(Y;\rz)\cong {\rm coker}(q)$ where $q: K^*(Y;\field{Q}) \rightarrow K^*(Y;\qz) \oplus K^*(Y;\field{R})$ is the natural map.  The group ${\rm coker}(q)$ is of course graded; we denote the grading via ${\rm coker}(q^*)$.
\par
The goal of this subsection is the construction of an explicit isomorphism at the level of cycles from $K^*(Y;\rz)$ (modeled using cycles in Definition \ref{YCycles}) to ${\rm coker}(q)$. When $Y=pt$, this amounts to the construction of a map from cycles in $K^0(pt;\rz)$ to $\rz$. In general, based on the definition of $q$, we must construct maps from cycles in $K^*(Y;\rz)$ to $K^*(Y;\qz)$ and $K^*(Y;\field{R})$ respectively.  We will refer to the map in the case of $Y=pt$ as an index map.  
\par
Let $(W,\xi)$ be a cycle in $K^*(Y;\rz)$ and $(\partial W, \xi_{\field{C}})$ denote $\delta(W,\xi)\in K^{*-1}(Y)$ (recall that $\delta$ was defined in the statement of Theorem \ref{bockTypeSeq}).  
\par
The construction of the map from $(W,\xi)$ to an element in $K^*(Y;\qz)$ is as follows.  Since $\phi_*(\partial W, \xi_{\field{C}})=0$, $\phi_*$ is rationally injective, and \cite[Corollary 4.5.16]{Rav}, there exists $k\in \field{N}$, normal bundle $N_W$ over $W$, and bordism (with respect to the geometric model of the group $K^{*-1}(Y)$), $(Q,\eta_{\field{C}})$, such that
$$ k(\partial W, \xi_{\field{C}})^N = \partial (Q, \eta_{\field{C}})$$
where $N$ denotes $N_W$ restricted to $\partial W$.  By construction, $Q$ has the structure of a $spin^c$ $\zkz$-manifold (see for example \cite{Dee1, FM}). By \cite{Dee1}, $(Q,\eta_{\field{C}})$ defines a cycle in $K^*(Y;\qz):= \lim K^*(Y;\zkz)$.  The reader should note that, when $Y=pt$, the Freed-Melrose index (see \cite{Dee1,Dee2,FM}) of this cycle produces the required element of $\qz$.  
\par       
The element of $K^*(Y;\field{R})$ (associated to $(W,\xi)$) is given by the  cycle:
\begin{equation} \label{RValuedCycle}
(kW,\frac{1}{k} \xi_{N})^{N_W} \cup_{k(\partial W)^{N}} (-Q, \frac{1}{k}\phi_*(\eta_{\field{C}})) 
\end{equation}
Notice that the choice of $K$-theory class in this cycle is not unique; it depends on a choice of clutching function, which we denote by $u$. The well-definedness of this construction will be discussed shortly. Again, when $Y=pt$, one takes the $\field{R}$-valued index of this cycle to produce the required element in $\field{R}$. 
\par
Let ${\rm ind}_{\rz}$ denote the map produced by these two constructions.  Our first goal is to prove that this map is well-defined.  The reader should note that the map is {\it not} well-defined as a map to $K^*(Y;\qz) \oplus K^*(Y;\field{R})$ because we have made a number of choices in the construction of the image of $(Z,\xi)$ under ${\rm ind}_{\rz}$.  \par
To summarize, these choices are as follows:
\begin{enumerate}
\item The $k\in \field{N}$;
\item The normal bundle $N_W$ over $W$;
\item The bordism $(Q,\eta_{\field{C}})$ (which gives a class in $K^*(Y;\qz)$);
\item The choice of clutching function $u$.
\end{enumerate}
As above, let $N$ denote the restriction of $N_W$ to $\partial W$.  Using the obvious notation, let $k^{\prime}$, $N^{\prime}_W$, $(Q^{\prime}, \eta^{\prime}_{\field{C}})$, and $u^{\prime}$ be different choices of the previously listed data.  By assumption, 
\begin{eqnarray*}
k(\partial W, \xi_{\field{C}})^N & = & \partial (Q,\eta_{\field{C}}) \\
k^{\prime}(\partial W, \xi_{\field{C}})^{N^{\prime}} & = & \partial (Q^{\prime},\eta^{\prime}_{\field{C}})
\end{eqnarray*}
To begin, the nature of the inductive limits used to define $K$-theory with coefficients in $\field{Q}$ and $\qz$ implies that we can assume $k=k^{\prime}$; in other words, one can replace $k$ and $k^{\prime}$ with $k\cdot k^{\prime}$.
\par
By Lemma \ref{norStaIso}, there exists trivial bundles, $\epsilon$ and $\epsilon^{\prime}$ over $W$, such that 
$$N_W \oplus \epsilon \cong N_{W^{\prime}}^{\prime} \oplus \epsilon $$
Lemma \ref{twoVecMod} implies that 
$$((W,\xi)^{N_W})^{p_1^*(\epsilon)} \sim_{bor} ((W,\xi)^{N_{W^{\prime}}^\prime})^{p_2^*(\epsilon^{\prime})} $$
Let $((Z,\tilde{W} \dot{\cup} \tilde{W}^{\prime}), \tilde{\xi})$ be a fixed choice of such a bordism; in particular, let $\tilde{W}$ (respectively, $\tilde{W}^{\prime}$) denote the manifold (with boundary) in the cycle $((W,\xi)^{N_W})^{p_1^*(\epsilon)}$ (respectively, $((W,\xi)^{N_{W^{\prime}}^\prime})^{p_2^*(\epsilon^{\prime})}$). Then $(\partial Z - {\rm int}(\tilde{W} \dot{\cup} \tilde{W}^{\prime}), \tilde{\xi}_{\field{C}})$ defines a bordism (with respect to the group $K^{*-1}(Y)$) between $((\partial W, \xi_{\field{C}})^N)^{p^*_1(\epsilon)}$ and $((\partial W, \xi_{\field{C}})^{N^{\prime}})^{p^*_2(\epsilon^{\prime})}$.  
\par
As such, we can form an element in $K^*(Y;\field{Q})$. Let
$$(Q,\eta_{\field{C}})^{\epsilon} \cup k(\partial Z - {\rm int}(\tilde{W} \dot{\cup} \tilde{W}^{\prime}), \tilde{\xi}_{\field{C}}) \cup (Q^{\prime},\eta^{\prime}_{\field{C}})^{\epsilon^{\prime}}$$
be the Baum-Douglas cycle formed by gluing along $\partial Z=\partial W \dot{\cup} -\partial W$ and using the clutching function $(u^{\prime})^{-1}\circ u$. Then consider this cycle as an element in $K^*(Y;\field{Q})$ using the inductive limit structure on $\field{Q}$.  To be more precise, this cycle is considered as a element in $K^*(Y;\field{Q})$ using the following commutative diagram:
\begin{center}
$\begin{CD}
K^*(Y) @>>> K^*(Y;\field{Q})   \\
 @VVV  @VVV  \\
  K^*(Y;\zkz) @>>> K^*(Y;\qz) 
\end{CD}$
\end{center}
where the vertical maps are the maps appearing in the relevant Bockstein sequneces for $K$-theory with coefficients in $\zkz$ and $\qz$ (respectively). \par
Let $\delta_{\zkz}$ denote the Bockstein map with respect to the coefficient group $\zkz$. Then, using the bordism and vector bundle modification relations defined in \cite{Dee1}, we have
\begin{eqnarray*}
(Q,\eta_{\field{C}}) & \sim & (Q,\eta_{\field{C}})^{\epsilon_{Q}} \\
& \sim & (Q,\eta_{\field{C}})^{\epsilon_{Q}} \dot{\cup} (-Q^{\prime},\eta_{\field{C}})^{\epsilon_{Q^{\prime}}} \dot{\cup} (Q^{\prime},\eta_{\field{C}})^{\epsilon_{Q^{\prime}}} \\
& \sim & \delta_{\zkz} \left( (Q,\eta_{\field{C}})^{\epsilon} \cup k(\partial Z - {\rm int}(\tilde{W} \dot{\cup} \tilde{W}^{\prime}), \tilde{\xi}_{\field{C}}) \cup (Q^{\prime},\eta^{\prime}_{\field{C}})^{\epsilon^{\prime}} \right) \dot{\cup} (Q^{\prime},\eta_{\field{C}})^{\epsilon_{Q^{\prime}}}\\
& \sim & \delta_{\zkz} \left( (Q,\eta_{\field{C}})^{\epsilon} \cup k(\partial Z - {\rm int}(\tilde{W} \dot{\cup} \tilde{W}^{\prime}), \tilde{\xi}_{\field{C}}) \cup (Q^{\prime},\eta^{\prime}_{\field{C}})^{\epsilon^{\prime}} \right) \dot{\cup} (Q^{\prime},\eta_{\field{C}})
\end{eqnarray*}
This equivalence and the commutative diagram discussed in the previous paragraph imply that the construction of the element of $K^*(Y;\qz)$ is unique up to the image of an element in the image of $q$.  The reader should note the specific element of $K^*(Y;\field{Q})$ is given by the cycle 
$$(Q,\eta_{\field{C}})^{\epsilon} \cup k(\partial Z - {\rm int}(\tilde{W} \dot{\cup} \tilde{W}^{\prime}), \tilde{\xi}_{\field{C}}) \cup (Q^{\prime},\eta^{\prime}_{\field{C}})^{\epsilon^{\prime}} $$
To complete the proof of well-definedness, we must show that the cycle $(kW,\frac{1}{k} \xi_{N}) \cup_{k\partial W} (-Q, \frac{1}{k}\phi_*(\eta_{\field{C}}))$ is equivalent to
$$  (kW,\frac{1}{k} \xi_{N}) \cup_{k\partial W} (-Q^{\prime}, \frac{1}{k}\phi_*(\eta^{\prime}_{\field{C}})) \dot{\cup} (Q,\eta_{\field{C}})^{\epsilon} \cup k(\partial Z - {\rm int}(\tilde{W} \dot{\cup} \tilde{W}^{\prime}), \tilde{\xi}_{\field{C}}) \cup (Q^{\prime},\eta^{\prime}_{\field{C}})^{\epsilon^{\prime}} $$
This follows from the bordism relation in $K^*(Y;\field{R})$ and the existence of the bordism $((Z,\tilde{W} \dot{\cup} \tilde{W}^{\prime}), \frac{1}{k}\cdot \phi_*(\tilde{\xi}))$.  Thus ${\rm ind}_{\rz}$ is well-defined.
\begin{prop}
The map ${\rm ind}_{\rz}: K^0(Y;\phi) \rightarrow {\rm coker}(q)$ is an isomorphism.
\end{prop}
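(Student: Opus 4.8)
The plan is to show that $\mathrm{ind}_{\rz}$ fits into a morphism of six-term exact sequences and then invoke the Five Lemma. On the source side, we have the exact sequence of Theorem \ref{bockTypeSeq} applied to $K^*(Y;\phi)$ (equivalently, the sequence of Theorem \ref{exactSeq111} under the isomorphism of Section \ref{isoGeoModel1And2}), which realizes the Bockstein sequence for $0 \to \field{Z} \to \field{R} \to \rz \to 0$. On the target side, we use the exact sequence in $K$-theory with coefficients obtained by splicing together the Bockstein sequences for $0 \to \field{Z} \to \field{Q} \to \qz \to 0$ and $0 \to \field{Z} \to \field{R} \to \field{R} \to 0$; concretely, ${\rm coker}(q)$ sits in the exact sequence relating $K^*(Y)$, $K^*(Y;\field{Q})$, $K^*(Y;\qz)\oplus K^*(Y;\field{R})$ and ${\rm coker}(q)$, which is precisely the standard way $K^*(Y;\rz)$ is presented in \cite[Section 23.15]{Bla} and \cite[Section 5]{APS3}. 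First I would write down these two long exact sequences explicitly, identifying the relevant arrows.

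The core of the argument is then to define the vertical maps between the two sequences (the identity on $K^*(Y)$, an isomorphism on the $\field{Q}$/$\qz$-coefficient pieces obtained from the Freed--Melrose-type picture of \cite{Dee1,Dee2}, and $\mathrm{ind}_{\rz}$ itself on the $\rz$-piece) and to check that each square commutes. Two commutativity checks are essentially built into the construction: that the composite $K^*(Y;\phi) \xrightarrow{\delta} K^{*-1}(Y) \to K^{*-1}(Y;\qz)$ computed through $\mathrm{ind}_{\rz}$ agrees with the Bockstein $\delta_{\zkz}$ applied after the $\qz$-component is read off, and that $r\colon K^*(Y;\field{R}) \to K^*(Y;\phi)$ followed by $\mathrm{ind}_{\rz}$ lands in the $K^*(Y;\field{R})$-summand via the obvious inclusion (this is visible from Equation \ref{RValuedCycle}, since for a cycle coming from $K^*(Y;\field{R})$ one may take $k=1$, $Q=\emptyset$). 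The remaining two squares involve the maps $\phi_*\colon K^*(Y) \to K^*(Y;\field{R})$ (which matches the rationalization map $K^*(Y) \to K^*(Y;\field{Q})$ composed with $\field{Q} \hookrightarrow \field{R}$) and the connecting map; these follow from the same bordism manipulations already used in the well-definedness argument preceding the proposition, since the cycle in Equation \ref{RValuedCycle} is built by gluing $kW$ to $-Q$ along $k(\partial W)^N$.

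Having established that $\mathrm{ind}_{\rz}$ is a map of exact sequences whose other four vertical arrows are isomorphisms, the Five Lemma yields that $\mathrm{ind}_{\rz}$ is an isomorphism. The one auxiliary fact I would want to nail down first is the naturality of the $\qz$-component construction: namely that passing $(Q,\eta_{\field{C}})$ through the inductive limit $K^*(Y;\qz) = \varinjlim K^*(Y;\zkz)$ and the Freed--Melrose index of \cite{Dee1} is compatible with the connecting homomorphism $\delta_{\zkz}$; this is where the commutative square from the well-definedness argument (the one with $K^*(Y)$, $K^*(Y;\field{Q})$, $K^*(Y;\zkz)$, $K^*(Y;\qz)$) is reused.

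The main obstacle I anticipate is not any single commutativity square but rather the bookkeeping needed to make the $\field{R}$-coefficient square precise: one must verify that the class of the glued cycle $(kW,\tfrac1k\xi_N)^{N_W}\cup_{k(\partial W)^N}(-Q,\tfrac1k\phi_*(\eta_{\field{C}}))$ in $K^*(Y;\field{R}) = K_*(pt;C(Y)\otimes N)$ depends on the auxiliary data only through the equivalence relation already shown to be respected, and that under the map $r$ it reduces correctly — this amounts to carefully tracking the clutching function $u$ through the normal-bordism relation, exactly the kind of computation done just above the proposition. Once that is in hand, everything else is a diagram chase.
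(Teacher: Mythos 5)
Your overall strategy matches the paper's: exhibit $\mathrm{ind}_{\rz}$ as one vertical arrow in a map of six-term exact sequences whose other vertical arrows are isomorphisms, then invoke the Five Lemma. However, the target sequence you describe is not the one that makes the diagram comparable term-by-term. You propose the exact sequence ``relating $K^*(Y)$, $K^*(Y;\field{Q})$, $K^*(Y;\qz)\oplus K^*(Y;\field{R})$ and ${\rm coker}(q)$'' as the bottom row, and you list vertical arrows ``on the $\field{Q}$/$\qz$-coefficient pieces.'' But the source row coming from Theorem \ref{bockTypeSeq} has entries $K^*(Y)$, $K^*(Y;\field{R})$, $K^*(Y;\rz)$ -- there are no $\field{Q}$ or $\qz$ positions, so a Five Lemma between your two sequences cannot be set up directly. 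What the paper uses instead is the Atiyah--Patodi--Singer observation that ${\rm coker}(q^*)$ itself fits into a Bockstein sequence of exactly the same shape as the source one,
\[
\cdots \longrightarrow K^0(Y) \longrightarrow K^0(Y;\field{R}) \longrightarrow {\rm coker}(q^0) \longrightarrow K^1(Y) \longrightarrow \cdots ,
\]
so that all vertical arrows except $\mathrm{ind}_{\rz}$ are identities and the Five Lemma applies immediately. The $\field{Q}$, $\qz$ and Freed--Melrose machinery you invoke is the content of the \emph{definition} of $\mathrm{ind}_{\rz}$ and of the well-definedness argument preceding the proposition; it does not reappear as a separate column in the comparison diagram. (Also a small slip: ``$0\to\field{Z}\to\field{R}\to\field{R}\to 0$'' should read $\rz$ at the right end, and splicing two Bockstein sequences is not a standard operation -- what is actually being used is the single Bockstein sequence for $0\to\field{Z}\to\field{R}\to\rz\to 0$ applied to the ${\rm coker}(q)$ realization.) Once the target row is corrected to this Bockstein presentation of ${\rm coker}(q)$, your commutativity checks (the $r$-square via Equation \ref{RValuedCycle} with $k=1$, $Q=\emptyset$; the $\delta$-square via the compatibility of $\delta_{\zkz}$ with the $\qz$-component) are exactly what is needed, and the remainder of your argument goes through.
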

\begin{proof}
This follows from the Five Lemma and the commutative diagram:
\begin{center}
$\minCDarrowwidth15pt\begin{CD}
@>>> K^1(Y;\rz) @>>> K^0(Y) @>>> K^0(Y;\field{R}) @>>> K^0(Y;\rz) @>>> K^1(Y) @>>>  \\
@. @VVV @VVV @VVV @VVV @VVV  @. \\
 @>>> {\rm coker}(q^1) @>>>  K^0(Y) @>>> K^0(Y;\field{R}) @>>> {\rm coker}(q^0) @>>> K^1(Y) @>>> 
\end{CD}$
\end{center}
where 
\begin{enumerate}
\item the horizontal maps are the Bockstein sequences with respect to the realizations of $K$-theory with coefficients in $\rz$;
\item the vertical maps are the identity map, except for the map from $K^*(Y:\rz)$ to ${\rm coker}(q^*)$; the definition of this map is given in the statement of the theorem.
\end{enumerate}
\end{proof}
Returning to the case when $Y$ is a point, the composition of ${\rm ind}_{\rz}$ with the map $(W,\xi,f) \rightarrow (W,\xi)$ gives an $\rz$-valued index map on $K_0(X;\rz)$.

\subsection{Index pairings and slant products} \label{rzIndPai}
A more detailed review of $K$-theory with coefficients in $\rz$ is required before discussing the various index pairings related to our theory.  In \cite{Bas}, a model for $K^*(X;\rz)$ is constructed using ideas of Karoubi; we denote the realization in \cite{Bas} by $K^*_{Basu}(X;\rz)$. Cocycles in $K^1_{Basu}(X;\rz)$ are given by triples, $(V_1,V_2,\varphi)$, where $V_i$ are vector bundle over $X$ and $\alpha: V_1\otimes_{\phi} N \rightarrow V_2\otimes_{\phi} N$ is an isomorphism of $N$-module bundles.  In other words, the $K$-theory of $X$ with coefficients in $\rz$ is given by $K^*(X;\phi)$ (as defined in Section \ref{KthKarRel}). The main of this subsection is the construction of the geometric slant product; the analytic slant product is a special case of the Kasparov product (see for example \cite[Exercise 9.8.9]{HR})
\begin{prop}
Let $X$ be a finite CW-complex and $Y$ be a compact Hausdorff space.  Then, we have well-defined slant products:
\begin{eqnarray*}  
K^q_{Basu}(Y\times X;\rz)\times K_p(X) & \rightarrow & K^{p+q}(Y;\rz) \\
K^q(Y\times X)\times K_p(X;\rz) & \rightarrow & K^{p+q}(Y;\rz) 
\end{eqnarray*}
In the case $Y=pt$, the slant product reduces to an index pairing; an element of $\rz$ is obtained by taking the index of cycle in $K_0(pt;\rz)$.
\end{prop}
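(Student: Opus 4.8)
The plan is to define both slant products directly on cocycles and geometric cycles by the familiar recipe --- pull back along the map on the $X$-factor, multiply using a module structure, and (for the first product) integrate over a $spin^c$-fibre --- and then to check that the outcome respects all the relevant relations. The second product, $K^q(Y\times X)\times K_p(X;\rz)\to K^{p+q}(Y;\rz)$, needs no fibre integration. Given $x\in K^q(Y\times X)$ and a cycle $(W,\xi,f)$ representing a class in $K_p(X;\rz)=K_p(X;\phi)$ (so $\xi\in K^0(W,\partial W;\phi)$ and $f:W\to X$), I would pull $x$ back along $1_Y\times f$ and multiply by the pull-back of $\xi$ using the $K^0(Y\times W)$-module structure on $K^0(Y\times W,Y\times\partial W;\phi)$, obtaining a class in $K^q(Y\times W,Y\times\partial W;\phi)$ (suspending $x$ in the standard way when $q=1$). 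By the remark following Definition \ref{YCycles} this is exactly the $K$-theory datum of a cycle over a point with respect to $\phi$ and $Y$, so one sets $x/(W,\xi,f):=(W,(1_Y\times f)^*x\otimes_{\field{C}}\xi,c)$, with $c:W\to pt$ the constant map; this represents a class in $K^{p+q}(Y;\rz)$. Well-definedness in $x$ is immediate, and well-definedness in $(W,\xi,f)$ follows because pull-back commutes with restriction to regular domains and with the Bott twist of Section \ref{BorModel}, so the assignment carries bordisms (Definition \ref{borBou}) to bordisms and vector bundle modifications to vector bundle modifications, just as in Remarks \ref{phiBor} and \ref{phiVBM}.

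For the first product I would begin by identifying $K^q_{Basu}(Y\times X;\rz)$ with $K^q(Y\times X;\phi)$ as recalled above (here $\phi:\field{C}\to N$). Given $\sigma\in K^q(Y\times X;\phi)$ and a Baum--Douglas cycle $(M,E,f)$ for $K_p(X)$, set
$$\sigma/(M,E,f):=(p_Y)!\,((1_Y\times f)^*\sigma\otimes_{\field{C}}[E])\in K^{\dim M+q}(Y;\phi)=K^{p+q}(Y;\rz),$$
where $p_Y:Y\times M\to Y$ is the projection, $[E]$ denotes the pull-back to $Y\times M$ of the class of $E$, and $(p_Y)!$ is the Gysin (wrong-way) map for the $spin^c$-fibre bundle $p_Y$. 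Such a Gysin map exists because, by Remark \ref{relKthIsLikeKth}, $K^*(-;\phi)$ carries Thom isomorphisms and is a module over ordinary $K$-theory, hence enjoys Bott periodicity: one factors $p_Y$ through an even-codimension embedding $M\hookrightarrow S^N$, applies the Thom isomorphism of the normal bundle, and then uses periodicity for $Y\times S^N\to Y$ --- the same construction that produces $PD$ and the maps $g!$ used in Section \ref{KarModel}.

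To see that the first product descends, I would argue exactly as in the proof that the map $\alpha$ of Section \ref{isoKarTypSec} is well-defined. If $(M,E,f)=\partial(W,E_W,g)$ is a boundary, then $(1_Y\times g)^*\sigma\otimes_{\field{C}}[E_W]$ is a class on $Y\times W$ restricting to $(1_Y\times f)^*\sigma\otimes_{\field{C}}[E]$ on the boundary, and the relation $\partial\circ(p_Y)! = (p_Y|_{\partial})!$ --- the relative counterpart of $\partial[D_W]=[D_M]$, i.e.\ of $[r]\otimes[\partial]=0$ --- forces the class to vanish. Invariance under vector bundle modification uses $\pi\circ s=\mathrm{id}_M$ together with the naturality $s!=PD^{-1}s_*PD$ of the Gysin maps, word for word as in that proof. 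Independence of the cocycle chosen for $\sigma$ (adding elementary cocycles, isomorphism of cocycles) is automatic, since pull-back, the module multiplication, and $(p_Y)!$ all send elementary data to elementary data. Finally, specialising to $Y=pt$: both $\sigma/(M,E,f)$ and $x/(W,\xi,f)$ are then represented by cycles for $K^{p+q}(pt;\rz)$, and when $p+q$ is even this group is $K^0(pt;\rz)\cong K_0(pt;\rz)\cong\rz$ --- Poincar\'e duality identifies the $K$-theory and $K$-homology of a point, both of which identify with $K_0(C_\phi)$ for $\phi:\field{C}\to N$. Composing with the index map ${\rm ind}_{\rz}$ of Section \ref{rzIndMap}, which is precisely the geometric realisation of the isomorphism $K^0(pt;\phi)\xrightarrow{\ \sim\ }\rz$, yields the asserted $\rz$-valued index pairing obtained ``by taking the index of the resulting cycle in $K_0(pt;\rz)$''.

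The step I expect to be the main obstacle is the construction and control of the Gysin map for $K^*(-;\phi)$ in the $Y$-parametrised setting, and above all its compatibility with boundaries, which is what makes bordism invariance of the first product work; this is a relative, excision-type property of $K^*(-;\phi)$ that is only morally present in the earlier argument for $\alpha$. Carrying it out cleanly, together with the routine but unavoidable degree-one bookkeeping (unitaries and suspensions), is where the real effort lies. The second slant product, by contrast, is essentially formal once Definition \ref{YCycles} and the relations of Section \ref{BorModel} are in hand.
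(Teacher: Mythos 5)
For the second slant product your construction, $x/(W,\xi,f)=(W,(1_Y\times f)^*x\otimes_{\field C}\xi,c)$, matches the paper's up to the degree-one bookkeeping (the paper realises a class in $K^1(Y\times X)$ by a unitary $u$ and clutches to form $V_u$ over $W\times S^1$; your ``suspending $x$'' is the same move stated more abstractly). That half of the proof is essentially the paper's.

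For the first slant product, however, you take a genuinely different route that leaves a real gap. The paper never invokes a Gysin map and never lands in the Karoubi group $K^*(Y;\phi)$. Instead it writes down an explicit \emph{geometric} cycle in the sense of Definition \ref{YCycles}: the manifold is $M\times[0,1]$, the $N$-bundle is $\pi_M^*(E)\otimes((1_Y\times f)^*V_1\otimes_\phi N)$ over $Y\times M\times[0,1]$, the $\field{C}$-bundle over the boundary is $(1_Y\times f)^*V_1$ at $\{0\}$ and $(1_Y\times f)^*V_2$ at $\{1\}$, and $\alpha$ is the identity at one end and $\varphi$ at the other. The interval $[0,1]$ is kept as part of the cycle precisely so that the ``fibre integration'' you want to do via $(p_Y)!$ is already encoded in the bordism-type datum and never has to be performed. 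Well-definedness is then checked by constructing explicit bordisms (e.g.\ over $M\times\field{D}$ for an elementary cocycle, over $Z\cong W\times[0,1]$ for a bounding Baum--Douglas cycle), exactly parallel to the arguments of Sections \ref{BorModel} and \ref{isoGeoModel1And2}.

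Your $\sigma/(M,E,f):=(p_Y)!\bigl((1_Y\times f)^*\sigma\otimes_{\field{C}}[E]\bigr)$ lands in $K^{*}(Y;\phi)$, which is $K^*_{Basu}(Y;\rz)$, not the group $K^{p+q}(Y;\rz)$ of Definition \ref{YCycles} that the Proposition targets; you would need a further translation between the two models, which is itself part of what the paper proves (via ${\rm ind}_\rz$ and the Five Lemma argument). More seriously, as you yourself flag, the bordism invariance of $(p_Y)!$ restricted to a boundary --- the ``$\partial\circ(p_Y)!=(p_Y|_\partial)!$'' step --- is a relative cobordism-invariance statement for the Gysin map of $K^*(-;\phi)$ in a $Y$-parametrised setting, and it is not established by the $\alpha$-argument of Section \ref{isoKarTypSec} (which concerns $K$-homology push-forwards and makes essential use of $KK$-factorisations). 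So the first half of the Proposition is not actually proved by your proposal: it outlines a plausible but substantially harder analytic route and identifies, correctly, that the key excision/Gysin compatibility is missing, whereas the paper's entirely explicit geometric construction sidesteps the issue by never forming the Gysin map at all.
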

We give a detailed treatment for the odd slant products; the even products are obtained using a similar construction. \par
For the slant product $K^1_{Basu}(Y\times X;\rz) \times K_1(X)$, fix a cocycle, $(V_1,V_2,\varphi) \in K^1_{Basu}(Y\times X;\rz)$, and a cycle, $(M,E,f) \in K_1(X)$.  Define the slant product at the level of cycles to be
$$(M\times [0,1], (\pi_M^*(E)\otimes (f^*(V_1)\otimes_{\phi}N),F,\alpha))$$
where
\begin{enumerate}
\item $\pi_M:M\times [0,1] \rightarrow M$ is the projection map;
\item $F$ is the vector bundle defined by $f^*(V_1)$ on $Y\times M\times \{0\}$ and $f^*(V_2)$ on $Y\times M\times \{1\}$;
\item $\alpha$ is the identity on the fibers at $Y\times M\times \{0\}$ and $\varphi$ on the fibers at $Y\times M\times \{1\}$;
\end{enumerate}
To show that this slant product is well-defined, we must show that it is invariant under the relations in both $K$-theory and $K$-homology. 
Starting with the relations in $K$-theory, suppose that $(V,V,\varphi)$ is an elementary cocycle and that $\varphi_t$ is a fixed homotopy from $Id_{\phi_*(V_1)}$ to $\varphi$.  Let $\field{D}$ denote the closed unit disk.  Then, the bordism 
$$ ((M \times \field{D}, M \times [0,1]),(\pi_{\field{D}}^*(f^*(V)\otimes E),\phi_*(\pi_{[0,1]}^*(f^*(V)\otimes E)),h_t))$$ 
has boundary given by the slant product of $(V,V,\varphi)$ with $(M,E,f)$; we note that $\pi_{[0,1]}$ and $\pi_{\field{D}}$ denote the (obvious) projection maps and $h_t$ is obtained from the homotopy $\phi_t$. Hence, the slant product vanishes for elementary cocycles. A similar construction implies that the construction respects isomorphism of cocycles. \par
Next, consider the relations in the $K$-homology group. The slant product clearly respects the disjoint union operation and relation.  For the bordism relation (with respect to $K_1(X)$), suppose that $(W,\bar{E},g)$ is a cycle with boundary, $(M,E,f)$.  By straightening the angle, we have a smooth compact $spin^c$-manifold (with boundary), $Z$, which is homeomorphic (via $h$) to $W\times [0,1]$.  Moreover, $\partial Z$ has a regular domain given by $M\times [0,1]$ and 
$$\partial Z - (M\times (0,1))=W\dot{\cup} -W$$  
Thus 
\begin{eqnarray*}
<(V_1,V_2,\varphi),(M,E,f)> & = & (M\times [0,1], (\pi_M^*(E)\otimes (f^*(V_1)\otimes_{\phi}N),F,\alpha)) \\
& = & \partial (Z,M\times[0,1],((\pi_W\circ h)^*(\bar{E})\otimes g^*(V_1) \otimes_{\phi} N ,\bar{F} ,\bar{\alpha} ))
\end{eqnarray*}
where 
\begin{enumerate}
\item $\pi_W:W\times [0,1] \rightarrow W$ is the projection map;
\item $\bar{F}$ is the vector bundle defined by $(\bar{E} \otimes f^*(V_1))$ on $W\times \{0\}$ and $(\bar{E} \otimes f^*(V_2))$ on $W\times \{1\}$;
\item $\bar{\alpha}$ is the identity on the fibers at $W\times \{0\}$ and $\varphi$ on the fibers at $W\times \{1\}$;
\end{enumerate}
Finally, for vector bundle modification, let $(V_1,V_2,\varphi)$ be a cocycle in $K^1_{Basu}(Y\times X;\rz)$, $(M,E,f)$ be a Baum-Douglas cycle and $V$ be an even rank $spin^c$-vector bundle over $M$.  Then, the well-definedness of the slant product follows since 
$$(M\times [0,1], (\pi_M^*(E)\otimes (V_1\otimes_{\phi}N),F,\alpha)) \sim (M\times [0,1], (\pi_M^*(E)\otimes (V_1\otimes_{\phi}N),F,\alpha))^{\pi^*(V)}$$ 
This completes the proof that the slant product $K^1_{Basu}(Y \times X;\rz)\times K_1(X) \rightarrow K^0(Y;\rz)$ is well-defined.
\par
For the slant product $K^1(Y\times X) \times K_1(X;\rz)$, fix a unitary, $u:Y\times X \rightarrow \mathcal{U}(n)$ (representing an element in $K^1(Y\times X)$), and a cycle, $(W,(E_N,E_{\field{C}},\alpha),f)$ (representing an element in $K_1(X;\rz)$) and define their slant product to be the cycle:
$$(W\times S^1, (\pi^*_W(E_N)\otimes (V_u\otimes_{\phi}N ),\pi^*_W(E_{\field{C}})\otimes V_u,\alpha \otimes id))$$
where
\begin{enumerate}
\item $\pi_W:W\times S^1$ is the projection onto $W$;
\item $V_u$ is the vector bundle obtained by clutching the trivial bundle $Y\times W\times [0,1] \times \field{C}^n$ using the automorphism associated to the function $u\circ (id_Y \times f): Y\times W \rightarrow \mathcal{U}(n)$.
\end{enumerate}
The proof that this slant product is well-defined is as follows.  Beginning with the relations in $K$-theory, suppose $u_t$ is a continuous path of unitaries.  For any $t$, $(\pi_W^*(E_{B_2})\otimes V_{u_i},\pi^*(F_{B_1})\otimes V_{u_i}, \alpha \otimes id)$ determines the same class in $K^0(Y\times W,Y\times \partial W;\phi)$. Since the slant product depends only on the $K$-theory class in $K^0(Y\times W,Y\times \partial W;\phi)$, it is independent of the particular unitary representive determining a class in $K^1(Y\times X)$.
\par
The relations in $K$-homology are considered next.  The proof for the disjoint union/direct sum relation is trivial.  For bordism, suppose that $(W,(E_N,E_{\field{C}},\alpha),f)$ is the boundarywith respect to the group $K^1(X;\rz)$) $((Q,W); let (E^{\prime}_N,E^{\prime}_{\field{C}},\alpha^{\prime}),g)$ denote such a bordism. We can form the bordism with respect to $K_0(Y;\rz)$
$$(Q\times S^1,W\times S^1),(\pi^*_Q(E_N)\otimes (V_u\otimes_{\phi}N ),\pi^*_{\partial Q-{\rm{int}(W)}}(E_{\field{C}})\otimes V_u,\alpha^{\prime} \otimes id) )$$
where the notation is as in the definition of the slant product (e.g., $\pi_Q:Q\times S^1 \rightarrow Q$ is the projection map). \par
Finally, for the vector bundle modification relation, let $V$ be a $spin^c$ vector bundle over $W$ of even rank.  Then the slant product of $[u]$ with $(W,(E_N,E_{\field{C}},\alpha),f)^V$ is given by 
$$(W\times S^1, (\pi^*_W(E_N)\otimes (V_u\otimes_{\phi}N ),\pi^*_W(F_{\field{C}})\otimes V_u,\alpha \otimes id))^{p^*(V)}$$
where $p:W\times S^1 \rightarrow W$. This completes the proof that the slant product, $K^1(Y\times X)\times K_1(X;\rz)\rightarrow K^0(Y;\rz)$ is well-defined.
\subsection{Relationship with Lott's pairing}
In \cite{Lot}, Lott discusses a model for $K$-theory with coefficients in $\rz$ for smooth manifolds using connections and differential forms.  Again, this construction is based on work of Karoubi.  In particular, Lott constructs a pairing between $K$-theory with coefficients in $\rz$ and $K$-homology which is given by the relative $\eta$-invariant.  The main goal of this section is a proof that the pairing defined in the previous section is equal to Lott's pairing.  
\par
In this section, we must restrict to the case when $X$ is a smooth manifold.  Let $K^*_{Lott}(X;\rz)$ denote the realization of $K$-theory with coefficients in $\rz$ discussed in \cite{Lot}.  We will only discuss the odd pairing in detail.  A cocycle in $K^1_{Lott}(X;\rz)$ is given by $((V_1,\nabla_1),(V_2,\nabla_2),\omega)$ where $V_1$ and $V_2$ are complex Hermitian vector bundles, $\nabla_1$ and $\nabla_2$ are Hermitian connections on $V_1$ and $V_2$ respectively, and $\omega \in \Omega^{odd}(M)/im(d)$ such that $d\omega =ch(\nabla_1)-ch(\nabla_2)$.  \par 
Recall (see \cite{Bas}) that the isomorphism from $K^1_{Basu}(X;\rz)$ to $K^1_{Lott}(X;\rz)$ is defined at the level of cocycles via
$$(V_1,V_2,\varphi) \mapsto ((V_1,\nabla_1),(V_2,\nabla_2),CS_{N}(\tilde{\nabla}_1,\varphi^*(\tilde{\nabla}_2))) $$
where 
\begin{enumerate}
\item $\nabla_1$ and $\nabla_2$ are connections on $V_1$ and $V_2$ respectively;
\item For $i=1$ or $2$, $\tilde{\nabla}_i:=\nabla_1 \otimes I + I \otimes d$ is a $N$-bundle connection on $V_i\otimes_{\phi}N$;
\item $CS_{N}(\nabla_1,\varphi^*(\nabla_2))$ is the Chern-Simon form associated to the $N$-bundle connections $\tilde{\nabla}_1$ and $\varphi^*(\tilde{\nabla}_2)$.
\end{enumerate}  
Further details on these cocycles and $K^*_{Lott}(X;\rz)$ can be found in \cite{Lot}.  The reader can find more details on the $\eta$-invariant in \cite{APS1,APS2,APS3}.  
\par
The pairing $K^1_{Lott}(X;\rz)\times K_1(X) \rightarrow \rz$ is given at the level of cocycle and cycle via 
\begin{eqnarray*}
<((V_1,\nabla_1),(V_2,\nabla_2),\omega),(M,E,f)> & := & \bar{\eta}(D_{f^*(\nabla_1)}) -\bar{\eta}(D_{f^*(\nabla_2)}) \\
& & - \int_M {\rm Todd}(M)\wedge ch(E) \wedge f^*(\omega) \hbox{ {\rm mod} }\field{Z} \end{eqnarray*}
where 
\begin{enumerate}
\item $f$ is assumed to be a smooth function; 
\item $D_{f^*(\nabla_i)}$ is the Dirac operator on $M$ twisted by $E\otimes f^*(V_i)$;
\item $\eta(D_{f^*(\nabla_i)})$ is the $\eta$-invariant associated to $D_{f^*(\nabla_i)}$;
\item $\bar{\eta}(D_{f^*(\nabla_i)})= \frac{\eta(D_{f^*(\nabla_i)})+\rm{dim(ker}(D_{f^*(\nabla_i)})}{2}$ mod $\field{Z}$.
\end{enumerate}
\begin{theorem}
Let $X$ be a smooth compact manifold.  Then, the following diagram commutes:
\begin{center}
$\begin{CD}
K^1_{Basu}(X;\rz)\times K_1(X) @>(\Phi\times Id)>> K^1_{Lott}(X;\rz)\times K_1(X) \\
@VVV  @VVV \\
K_0(pt;\rz) @>{\rm ind}_{\rz}>> \rz \\ 
\end{CD}$
\end{center}
where 
\begin{enumerate}
\item The vertical maps are the pairings;
\item $\Phi:K^1_{Basu}(X;\rz) \rightarrow K^1_{Lott}(X;\rz)$ is the isomorphism constructed in \cite{Bas} (see the introduction of this subsection);
\item ${\rm ind}_{\rz}$ is the index map defined in Section \ref{rzIndPai}.
\end{enumerate} 
In other words, the pairing defined in \cite{Lot} is equal to the pairing defined in Section \ref{rzIndPai}.
\end{theorem}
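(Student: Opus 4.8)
The plan is to unwind the geometric slant product, feed the resulting cycle into the definition of ${\rm ind}_{\rz}$ from Section \ref{rzIndMap}, and then recognise the output as Lott's formula by an Atiyah--Patodi--Singer computation. Fix a cocycle $(V_1,V_2,\varphi)$ representing a class in $K^1_{Basu}(X;\rz)$ and a cycle $(M,E,f)$ representing a class in $K_1(X)$; since $X$ is a smooth manifold we may take $f$ smooth, and we fix Hermitian connections $\nabla_1,\nabla_2$ on $V_1,V_2$ and $\nabla_E$ on $E$, so that $\Phi(V_1,V_2,\varphi)=((V_1,\nabla_1),(V_2,\nabla_2),\omega)$ with $\omega=CS_N(\tilde\nabla_1,\varphi^*(\tilde\nabla_2))$. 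Unwinding the construction in Section \ref{rzIndPai}, the geometric pairing sends this data to the cycle $(W,(\mathcal{E}_N,\mathcal{F}_{\field{C}},\alpha))\in K_0(pt;\rz)$ with $W=M\times[0,1]$, $\mathcal{E}_N=\pi_M^*(E\otimes f^*(V_1))\otimes_{\phi}N$, with $\mathcal{F}_{\field{C}}$ equal to $E\otimes f^*(V_1)$ over $M\times\{0\}$ and $E\otimes f^*(V_2)$ over $M\times\{1\}$, and $\alpha$ the identity at $t=0$ and $\varphi$ at $t=1$.

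Next I would run the definition of ${\rm ind}_{\rz}$ on this cycle. Here $\delta(W,\xi)=(\partial W,\xi_{\field{C}})=(M,E\otimes f^*(V_1))\,\dot{\cup}\,(-M,E\otimes f^*(V_2))$ lies in $K_1(pt)=0$, hence is an honest Baum--Douglas boundary: after a normal bundle modification (harmless by Lemmas \ref{norStaIso} and \ref{twoVecMod}, together with multiplicativity of ${\rm Todd}\cdot{\rm ch}$ under $spin^c$ vector bundle modification, and invisible to both sides of the argument) there are, by \cite[Corollary 4.5.16]{Rav}, a compact $spin^c$-manifold $Q$ and a class $\eta_{\field{C}}\in K^0(Q)$ with $\partial(Q,\eta_{\field{C}})=(\partial W,\xi_{\field{C}})^{N}$; in particular $k=1$ suffices. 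Since $Q$ is an honest manifold, its Freed--Melrose index is an integer, so the $\qz$-component of ${\rm ind}_{\rz}$ is zero; under the isomorphism ${\rm coker}(q)\cong\rz$ this means that ${\rm ind}_{\rz}$ applied to our cycle equals, mod $\field{Z}$, the $\field{R}$-index (the $K_0(N)\cong\field{R}$-valued index of the twisted $spin^c$-Dirac operator) of the closed $spin^c$-manifold $\widetilde{M}:=W\cup_{\partial W}(-Q)$ carrying the $N$-bundle $\mathcal{V}$ obtained by clutching $\mathcal{E}_N$ to $\phi_*(\eta_{\field{C}})$ along $\partial W$ via $\alpha$.

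The \emph{heart} of the argument is then the APS computation of this $\field{R}$-index. Choose a connection $\nabla_{\eta}$ on $\eta_{\field{C}}$ restricting to $\nabla_{E\otimes f^*(V_i)}$ on the two boundary copies of $M$, and a connection $\nabla_{\mathcal{V}}$ on $\mathcal{V}$ equal to $\phi_*(\nabla_\eta)$ over $-Q$ and, over $W=M\times[0,1]$, a path of $N$-connections from $\widetilde{\nabla_{E\otimes f^*(V_1)}}$ to $\varphi^*(\widetilde{\nabla_{E\otimes f^*(V_2)}})$ (arranging product structures near $\partial W$). As $\widetilde{M}$ is closed, the $\field{R}$-index is $\int_{\widetilde{M}}{\rm Todd}(\widetilde{M})\wedge{\rm ch}_N(\nabla_{\mathcal{V}})$, which I split as $\int_{-Q}+\int_{W}$. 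Over $-Q$ the $N$-factor of $\phi_*(\nabla_\eta)$ is flat, so ${\rm ch}_N(\phi_*(\nabla_\eta))={\rm ch}(\nabla_\eta)$ and $\int_{-Q}{\rm Todd}\wedge{\rm ch}_N=-\int_Q{\rm Todd}(Q)\wedge{\rm ch}(\nabla_\eta)$; the ordinary APS index theorem for $(Q,\eta_{\field{C}})$ rewrites this, mod $\field{Z}$, as $\bar{\eta}(D_{f^*(\nabla_1)})-\bar{\eta}(D_{f^*(\nabla_2)})$ (the integer index term vanishes mod $\field{Z}$; signs are fixed by the orientation $\partial Q=\partial W$). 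Over $W$, ${\rm Todd}(M\times[0,1])=\pi_M^*{\rm Todd}(M)$ and integration over $[0,1]$ turns ${\rm ch}_N(\nabla_{\mathcal{V}})$ into the transgression form $CS_N(\widetilde{\nabla_{E\otimes f^*(V_1)}},\varphi^*(\widetilde{\nabla_{E\otimes f^*(V_2)}}))$; since one tensor factor ($E$ with $\nabla_E$) is fixed along the path, this form splits as ${\rm ch}(E)\wedge f^*(CS_N(\tilde\nabla_1,\varphi^*(\tilde\nabla_2)))={\rm ch}(E)\wedge f^*(\omega)$ by the product-connection transgression identity and naturality, so $\int_W={\rm Todd}$-integral $=\int_M{\rm Todd}(M)\wedge{\rm ch}(E)\wedge f^*(\omega)$. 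Adding the two contributions with the correct signs reproduces $\bar{\eta}(D_{f^*(\nabla_1)})-\bar{\eta}(D_{f^*(\nabla_2)})-\int_M{\rm Todd}(M)\wedge{\rm ch}(E)\wedge f^*(\omega)\pmod{\field{Z}}$, i.e. $\langle\Phi(V_1,V_2,\varphi),(M,E,f)\rangle$, which is what we must show.

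The step I expect to be the real obstacle is this last computation carried out carefully: (i) justifying that the $K_0(N)\cong\field{R}$-valued index of the glued $N$-bundle cycle is genuinely computed by the Chern--Weil integral $\int{\rm Todd}\cdot{\rm ch}_N$ — the cleanest route is to reduce it to the ordinary index theorem on the two pieces via APS, exploiting that $\eta_{\field{C}}$ is an honest complex bundle so that the non-integral part of the index is produced entirely by the clutching map $\varphi$ over the cylinder; and (ii) tracking orientation and sign conventions consistently across three models at once (the Baum--Douglas $spin^c$-orientations and the $-Q$/interval orientations built into ${\rm ind}_{\rz}$ versus Lott's sign conventions for $\bar{\eta}$ and for the Chern--Simons form $\omega$), which is where the bookkeeping is genuinely delicate. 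The remaining points are routine: independence of the choices (normal bundle, $Q$, clutching function) made in the construction of ${\rm ind}_{\rz}$ is already part of its well-definedness proved above, so any convenient $Q$ may be used, and the even pairings are handled by the same argument.
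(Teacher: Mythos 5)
Your proposal follows essentially the same route as the paper: reduce the claim to an identity for $\mathrm{ind}_{\rz}$ applied to the cylinder cycle, observe that $\delta$ of this cycle lands in $K_1(pt)=0$ so one can take $k=1$ and the $\qz$-component vanishes, close up $W=M\times[0,1]$ with a bounding Baum--Douglas cycle $(Q,\eta_{\field{C}})$, compute the $K_0(N)\cong\field{R}$-valued index of the glued closed $spin^c$-manifold by the Chern--Weil formula $\int \mathrm{Todd}\wedge\mathrm{ch}_N$, and split the integral into the $Q$-piece (rewritten via the ordinary APS index theorem as $\bar\eta_1-\bar\eta_2$ mod $\field{Z}$) and the $W$-piece (transgression over $[0,1]$ producing the Chern--Simons form, hence $\int_M \mathrm{Todd}(M)\wedge\mathrm{ch}(E)\wedge f^*(\omega)$). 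This is precisely the paper's computation; you add a useful clarification of why the $\qz$-slot is zero (the Freed--Melrose index mod $1$ is trivially integral), while the paper instead opts for the streamlining assumption $f=\mathrm{id}$, $E$ trivial, and both treatments defer the same sign/orientation bookkeeping and the validity of the local $N$-index formula to the references.
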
 
\begin{proof}
Let $(V_1,V_2,\varphi)$ be a cocycle in $K^1_{Basu}(X;\rz)$, $(M,E,f)$ be a Baum-Douglas cycle in $K_1(X)$, and $((V_1,\nabla_1),(V_2,\nabla_2),CS_{N}(\tilde{\nabla}_1,\varphi^*(\tilde{\nabla}_2)))$ denote the image of $(V_1,V_2,\varphi)$ under the isomorphism from $K^1_{Basu}(X;\rz)$ to $K^1_{Lott}(X;\rz)$.  To streamline the proof, we introduce some notation.  Let $\eta_i$ denote $\bar{\eta}(D_{f^*(\nabla_i)})$ and $\mathfrak{F}$ denote the cycle $(M\times [0,1],(\pi^*(E\otimes f^*(V_1))\otimes_{\phi}N,F,\alpha)$; the reader should recall the notation of Section \ref{rzIndPai}. 
\par
In this notation, the proof amounts to showing the following equality: 
$${\rm ind}_{\rz}(\mathfrak{F})=\eta_1-\eta_2 - \int_M {\rm Todd}(M)\wedge ch(E) \wedge f^*(CS_{N}(\tilde{\nabla}_1,\varphi^*(\tilde{\nabla}_2))) $$
The computation of these indices requires us to fix quite a bit of data.  We complete the proof assuming that $(M,E,f)$ satisfies the property that $(M\times\{0\}\dot{\cup} M\times\{1\}, E \otimes f^*(V_1) \dot{\cup} E\otimes f^*(V_2))$ is a boundary as a Baum-Douglas cycle over a point.  The general case can be obtained using the model using $K$-theory classes and the existence of a normal bordism to a cycle with this property. Let
\begin{enumerate}
\item $(Q,G)$ be a particular choice of bounding cycle for $(M\times\{0\}\dot{\cup} M\times\{1\}, E \otimes f^*(V_1) \dot{\cup} 	E\otimes f^*(V_2))$.
\item $Z$ be the compact $spin^c$-manifold without boundary $Q\cup (M\times [0,1])$ and $(Z,F_N)$ be the cycle in $K_*(X;N)$ formed (via clutching the bundles using $\alpha$) from $(M\times [0,1],\pi^*(E)\otimes(f^*(V_1)))$ and $(Q,G)$;
\end{enumerate}
In addition, the Dirac operators associated to the various manifolds twisted by the appropriate vector or von Neumann bundle will be denoted using subscript notation (e.g., the Dirac operator on $Q$ will be denote by $D_Q$).  Then 
$${\rm ind}_{\rz}(\mathfrak{F})=(0,{\rm ind}_{\field{R}}(D_{Q\cup M \times [0,1]}))\in {\rm coker}(q)$$ 
where $q: \field{Q} \rightarrow \qz \oplus \field{R}$ is the natural map (see Section \ref{rzIndMap}).  Further geometric structures on the manifolds and bundles involved are required.  Without loss of generality (see \cite[Proposition 3]{Lot}), assume that $f$ is the identity map and $E$ is a trivial line bundle (this simplifies notation).
\begin{enumerate}
\item Let $g$ be a metric on $Z$ which is a product in a neighbourhood of $M\times [0,1] \subseteq Z$;
\item Fix a connection, $\nabla_F$ on $F$ which is compatible with $g$;
\item Fix an $N$-connection, $\nabla_N$ on $F_N$ which is compatible with $g$ and is equal to $\nabla_F \otimes I + I\otimes d$ on fibers over $Q \subseteq Z$.  Moreover, choose $\nabla_N$ so that it is of the form $\tilde{\nabla}_1 t + (1-t) \varphi^*(\tilde{\nabla})_2$ on $M\times [0,1]$ where 
$\tilde{\nabla}_i:= \nabla_i\otimes I + I \otimes d$;
\end{enumerate}
Using the specific geometric data above, we have
\begin{eqnarray*}
{\rm ind}_{N}(D_{Q\cup M\times [0,1]}) & = & \int_{Q\cup M \times [0,1]} {\rm Todd}(Z)ch_N(\nabla_N) \in \field{R} \\
& = & \int_{Q} {\rm Todd}(Q)ch_N(\nabla_N|_{Q}) \\ 
&  & + \int_{M\times [0,1]}{\rm Todd}(M\times[0,1])ch_N(\nabla_N|_{M\times [0,1]}) 
\end{eqnarray*}
The isomorphism from ${\rm coker}(q)$ to $\rz$ is given by taking the difference of the $\qz$-entry and $\field{R}$-entry mod $\field{Z}$.  Thus, 
$${\rm ind}_{\rz}(\mathfrak{F})= -{\rm ind}_{N}(D) \hbox{ {\rm mod} }\field{Z}$$
The fact that the formula for ${\rm ind}_N(D_{Q\cup M\times [0,1]})$ is local and the specific nature of the connections used lead to
$${\rm ind}_{\rz}(\mathfrak{F})=\eta_1 - \eta_2 - \int_{M\times [0,1]}{\rm Todd}(M\times [0,1])ch_N(\nabla_N|_{M\times [0,1]}) \hbox{ {\rm mod} }\field{Z}$$
The fact that $\nabla_N|_{M\times [0,1]}=t \tilde{\nabla}_1  + (1-t) \varphi^*(\tilde{\nabla}_2)$, the definition of the Chern-Simon form and the fact that the other differential forms involved are pullbacks further reduces this expression to 
$${\rm ind}_{\rz}(\mathfrak{F})=\eta_1 - \eta_2 - \int_{M}{\rm Todd}(M)CS_N(\tilde{\nabla_1},\varphi^*(\nabla_2)) \hbox{ {\rm mod} }\field{Z}$$
This completes the proof. 
\end{proof}
\vspace{0.25cm}  
{\bf Acknowledgments} \\
The author thanks Heath Emerson, Magnus Goffeng, Nigel Higson, Ralf Meyer and Thomas Schick for discussions and also thanks the referee for a number of useful comments. This work was supported by NSERC through a Postdoctoral fellowship.

\vspace{0.25cm}
Email address: rjdeeley@uni-math.gwdg.de \vspace{0.25cm} \\
{ \footnotesize MATHEMATISCHES INSTITUT, GEORG-AUGUST UNIVERSIT${\rm \ddot{A}}$T, BUNSENSTRASSE 3-5, 37073 G${\rm \ddot{O}}$TTINGEN, GERMANY}
\end{document}